\newtheorem{thm}{Theorem}[section]
\newtheorem{lem}[thm]{Lemma}
\newtheorem{prop}[thm]{Proposition}
\newtheorem{cor}[thm]{Corollary}
\newtheorem{defn}[thm]{Definition}
\newtheorem{rem}[thm]{Remark}
\numberwithin{equation}{section}
\def\R{\mathbb{R}}
\def\Z{\mathbb{Z}}
\def\ra{\rightarrow}
\def\al{\alpha}
\def\ep{\epsilon}
\def\la{\lambda}
\def\si{\sigma}
\def\om{\omega}
\def\Om{\Omega}
\def\de{\delta}
\def\De{\Delta}
\def\ga{\gamma}
\begin{document}

\title[Existence, Uniqueness and Stability of Transition Fronts]{Existence, Uniqueness and Stability of Transition Fronts of Nonlocal Equations in Time Heterogeneous Bistable Media}

\author{Wenxian Shen}
\address{Department of Mathematics and Statistics, Auburn University, Auburn, AL 36849, USA}
\email{wenxish@auburn.edu}

\author{Zhongwei Shen}
\address{Department of Mathematical and Statistical Sciences,
University of Alberta, Edmonton, AB T6G 2G1, Canada}
\email{zhongwei@ualberta.ca}

\subjclass[2010]{35C07, 35K55, 35K57, 92D25}



\keywords{transition front, nonlocal dispersal equation, bistable, time heterogeneous media, existence, regularity, stability, uniqueness, periodicity, asymptotic speeds}

\begin{abstract}
The present paper is devoted to the study of the existence, the uniqueness and the stability of transition fronts of nonlocal dispersal equations in time heterogeneous media of bistable type under the unbalanced condition. We first study space non-increasing transition fronts and prove various important qualitative properties, including  uniform steepness,  stability, uniform stability and exponential decaying estimates. Then, we show that any transition front, after certain space shift, coincides with a space non-increasing transition front (if it exists), which implies the uniqueness, up to space shifts, and monotonicity of transition fronts provided that a space non-increasing transition front exists. Moreover, we show that a transition front must be a periodic traveling wave in periodic media and asymptotic speeds of transition fronts exist in uniquely ergodic media. Finally, we prove the existence of space non-increasing transition fronts, whose proof does not need the unbalanced condition.
\end{abstract}

\maketitle

\tableofcontents


\section{Introduction}

This present paper is devoted to the investigation of transition fronts of  the following nonlocal dispersal equation in time heterogeneous media
\begin{equation}\label{main-eqn}
u_{t}=J\ast u-u+f(t,u),\quad (t,x)\in\R\times\R,
\end{equation}
where $J$ is a symmetric dispersal kernel function, $[J\ast u](t,x)=\int_{\R}J(x-y)u(t,y)dy$, and $f$ is a bistable type nonlinearity. More precisely, we
assume that $J$ and $f$  satisfy (H1)-(H3) stated in the following.

\begin{itemize}
\item[\bf (H1)] $J:\R\to\R$ is continuously differentiable and satisfies $J\not\equiv0$, $J(x)=J(-x)\geq0$ for $x\in\R$, $\int_{\R}J(x)dx=1$ and
\begin{equation*}
\int_{\R}J(x)e^{\ga x}dx<\infty,\quad\int_{\R}|J'(x)|e^{\ga x}dx<\infty,\quad\forall\ga\in\R.
\end{equation*}
\end{itemize}

\begin{itemize}
\item[\bf (H2)] There exist $C^{2}$ functions $f_{B}:\R\to\R$ and  $f_{\tilde{B}}:\R\to\R$ such that
\begin{equation*}
f_{B}(u)\leq f(t,u)\leq f_{\tilde{B}}(u),\quad (t,u)\in\R\times[0,1].
\end{equation*}
Moreover, the following conditions hold:
\begin{itemize}
\item $f:\R\times\R\to\R$ is continuously differentiable, and satisfies
\begin{equation*}
\sup_{(t,u)\in\R\times[-1,2]}\big(|f_{t}(t,u)|+|f_{u}(t,u)|\big)<\infty;
\end{equation*}

\item $f_{B}$ is of standard bistable type, that is, $f_{B}(0)=f_{B}(\theta)=f_{B}(1)=0$ for some $\theta\in(0,1)$, $f_{B}(u)<0$ for $u\in(0,\theta)$, $f_{B}(u)>0$ for $u\in(\theta,1)$ and satisfies the unbalanced condition
\begin{equation}\label{unbalanced-cond}
\text{the speed of traveling waves of $u_{t}=J\ast u-u+f_{B}(u)$ is positive};
\end{equation}

\item  $f_{\tilde{B}}$ is also of standard bistable type, that is, $f_{\tilde{B}}(0)=f_{\tilde{B}}(\tilde{\theta})=f_{\tilde{B}}(1)=0$ for some $\tilde{\theta}\in(0,1)$, $f_{\tilde{B}}(u)<0$ for $u\in(0,\tilde{\theta})$ and $f_{\tilde{B}}(u)>0$ for $u\in(\tilde{\theta},1)$.
\end{itemize}
\end{itemize}

We remark that (H2) implies that $f(t,0)=0=f(t,1)$ for all $t\in\R$, that is, $u\equiv0$ and $u\equiv1$ are two constant solutions of \eqref{main-eqn}, and the speed of traveling waves of
\begin{equation}\label{main-eqn-bi-f-B}
u_{t}=J\ast u-u+f_{B}(u)
\end{equation}
is unique, and traveling waves of \eqref{main-eqn-bi-f-B} are unique up to shifts (see \cite{BaFiReWa97}). Here, by traveling waves of \eqref{main-eqn-bi-f-B}, we mean global-in-time solutions of the form $\phi_{B}(x-c_{B}t)$ with $\phi_{B}(-\infty)=1$ and $\phi_{B}(\infty)=0$. Moreover, the unbalanced condition \eqref{unbalanced-cond} is equivalent to the speed of traveling waves of \eqref{main-eqn-bi-f-B} being nonzero and $\int_{0}^{1}f_{B}(u)du>0$. Note this is different from that in the classical case, where the speed of traveling waves of $u_{t}=u_{xx}+f_{B}(u)$ has the same sign as that of the integral $\int_{0}^{1}f_{B}(u)du$ (see e.g. \cite{ArWe75}).

The next assumption makes sure the uniform stability of $u\equiv0$ and $u\equiv1$.

\begin{itemize}
\item[\bf (H3)]There exist $\theta_{0}$, $\theta_{1}$ with $0<\theta_{0}<\tilde{\theta}\leq\theta<\theta_{1}<1$ and $\beta_{0}>0$, $\beta_{1}>0$ such that
\begin{equation*}
f_{u}(t,u)\leq-\beta_{0},\quad u\in[-1,\theta_{0}]\quad\text{and}\quad
f_{u}(t,u)\leq-\beta_{1},\quad u\in[\theta_{1},2]
\end{equation*}
for all $t\in\R$.
\end{itemize}

Sometime, we also assume that $f$ satisfies

\begin{itemize}
\item[\bf (H4)] The ODE
\begin{equation}\label{ode-instable}
u_t=f(t,u)
\end{equation}
has an entire solution $u_0:\R\to\R$ satisfying
\begin{itemize}
\item $0<\inf_{t\in\R}u_0(t)\leq\sup_{t\in\R}u_0(t)<1$;

\item there exists $0<\de_{0}\ll1$ such that
\begin{equation}\label{nondegeneracy-intro}
\inf_{t\in\R}\inf_{u\in[u_{0}(t)-\de_{0},u_{0}(t)+\de_{0}]}f_{u}(t,u)>0,
\end{equation}
\item for any $t_0\in\R$, $u_1\in (0,u_0(t_0))$ and $u_2\in (u_0(t_0),1)$, there holds
$$
u(t;t_0,u_1)\to 0,\quad u(t;t_0,u_2)\to 1\quad\text{as}\quad t-t_0\to \infty,
$$
where $u(t;t_0,u_i)$ are the solution of \eqref{ode-instable} with $u(t_0;t_0,u_i)=u_i$ ($i=1,2$).
\end{itemize}
\end{itemize}

Among others, equation \eqref{main-eqn} is used to model the evolution of population density of species with Allee effect. A typical example is $f(t,u)=u(u-\theta(t))(1-u)$ for appropriate $\theta(t)$. Solutions of particular interest are transition fronts connecting $0$ and $1$ due to their importance in describing extinction and persistence of population. In the case that $f(t,u)\equiv f(u)$ is independent of $t$, transition fronts are strongly related to traveling waves, that is, solutions of the form $u(t,x)=\phi(x-ct)$ for some $\phi:\R\to (0,1)$ and $c\in\R$ with $\phi(-\infty)=1$ and $\phi(\infty)=0$. The reader is referred to \cite{BaFiReWa97, Ch97} for the study of the existence, the uniqueness and the stability of traveling waves of \eqref{main-eqn} in time independent bistable media. Also, see \cite{AcKu15,BaCh02,BaCh06,Ya09} and references therein for more related works. In \cite{FCh}, time almost-periodic traveling waves of \eqref{main-eqn} in the present of random diffusion are studied when $f(t,u)$ is almost periodic in $u$. As far as general time heterogeneity is concerned, there is little study on transition fronts of \eqref{main-eqn} with bistable nonlinearity.

The objective of this paper is to study the existence, the uniqueness and the stability of transition fronts of \eqref{main-eqn} when $f$ is a general time dependent function satisfying (H2) and (H3). We recall the definition of transition fronts.

\begin{defn}\label{defn-tf}
Suppose $f(t,0)=0=f(t,1)$ for all $t\in\R$. A global-in-time solution $u(t,x)$ of \eqref{main-eqn} is called a (right-moving) {\rm transition front} (connecting $0$ and $1$) in the sense of Berestycki-Hamel (see \cite{BeHa07,BeHa12}, also see \cite{Sh99-1,Sh99-2}) if $u(t,x)\in(0,1)$ for all $(t,x)\in\R\times\R$ and there exists a function $X:\R\to\R$, called {\rm interface location function}, such that
\begin{equation*}
\lim_{x\to-\infty}u(t,x+X(t))=1\,\,\text{and}\,\,\lim_{x\to\infty}u(t,x+X(t))=0\,\,\text{uniformly in}\,\,t\in\R.
\end{equation*}
\end{defn}

The notion of a transition front is a proper generalization of a traveling wave in homogeneous media or a periodic (or pulsating) traveling wave in periodic media. The interface location function $X(t)$ tells the position of the transition front $u(t,x)$ as time $t$ elapses. Notice, if $\xi(t)$ is a bounded function, then $X(t)+\xi(t)$ is also an interface location function. Thus, interface location function is not unique. But, it is easy to check that if $Y(t)$ is another interface location function, then $X(t)-Y(t)$ is a bounded function. Hence, interface location functions are unique up to addition by bounded functions. The uniform-in-$t$ limits (the essential property in the definition) shows the \textit{bounded interface width}, that is,
\begin{equation}\label{bounded-interface-width-defn}
\forall\,\,0<\ep_{1}\leq\ep_{2}<1,\quad\sup_{t\in\R}{\rm diam}\{x\in\R|\ep_{1}\leq u(t,x)\leq\ep_{2}\}<\infty.
\end{equation}
This actually gives an equivalent definition of transition fronts, that is, a global-in-time solution $u(t,x)$ of \eqref{main-eqn} is called a transition front if $u(t,x)\in(0,1)$ for all $(t,x)\in\R\times\R$, $u(t,x)\to1$ as $x\to-\infty$ and $u(t,x)\to0$ as $x\to\infty$ for all $t\in\R$, and \eqref{bounded-interface-width-defn} holds.

In the study of the existence, the stability and the uniqueness of transition fronts of \eqref{main-eqn}, sub- and super-solutions and comparison principles play crucial roles. We remark that showing a function constructed from a transition front is a sub-solution or a super-solution usually involves the space derivative of the transition front. However,  neither the definition nor the equation \eqref{main-eqn} guarantees any space regularity of transition fronts. In \cite{ShSh15-regularity}, we studied the space regularity of transition fronts of nonlocal dispersal equations in general heterogeneous media. The following proposition follows from \cite[Theorems 1.1 and Corollary 1.6]{ShSh15-regularity}.

\begin{prop}\label{prop-regularity-of-tf}
Assume (H1)-(H3). Let $u(t,x)$ be an arbitrary transition front of \eqref{main-eqn} and $X(t)$ be its interface location function. Then,
\begin{itemize}
\item[\rm(i)] there exists a continuously differentiable function $\tilde{X}:\R\to\R$ satisfying
$$c_{\min}\leq\dot{\tilde{X}}(t)\leq c_{\max},\quad \forall t\in\R$$
for some $0<c_{\min}\leq c_{\max}<\infty$ such that
$$\sup_{t\in\R}|X(t)-\tilde{X}(t)|<\infty;$$
in particular, $\tilde{X}(t)$ is also an interface location function of $u(t,x)$;

\item[\rm(ii)] $u(t,x)$ is regular in space, that is, $u(t,x)$ is continuously differentiable in $x$ for any $t\in\R$ and satisfies $$\sup_{(t,x)\in\R\times\R}|u_{x}(t,x)|<\infty.$$
\end{itemize}
\end{prop}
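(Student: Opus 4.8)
The statement is quoted from the companion regularity paper, so the most direct route is to check that (H1)--(H3) here imply the structural assumptions required in \cite{ShSh15-regularity} and then invoke \cite[Theorem 1.1, Corollary 1.6]{ShSh15-regularity}. For a self-contained argument the two assertions are best separated: part (i) is a statement about the \emph{speed} of the interface (bounded above and bounded away from $0$), while part (ii) is a uniform $C^{1}$-in-$x$ bound, which I expect to be the genuinely hard part, since the nonlocal operator $J\ast u-u$ has no smoothing effect and parabolic-type estimates are unavailable.

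For part (i) I would fix a level, say $\theta$, and let $X(t)$ be a point with $u(t,X(t))=\theta$; the bounded interface width \eqref{bounded-interface-width-defn} guarantees that any such choice differs from the given interface location by a bounded amount, so it suffices to control the motion of one level set. To bound the speed from above I would use the exponential moment bounds in (H1): for $c$ large one can construct an exponential super-solution of the form $\min\{1,Ae^{-\lambda(x-ct)}\}$, where the damping $f_{u}\le-\beta_{0}$ near $u=0$ from (H3) and finiteness of $\int_{\R}J(y)e^{\lambda y}\,dy$ make the super-solution inequality hold; placing it above the front caps the rightward advance of the $\theta$-level set by $c$ per unit time. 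To bound the speed away from $0$ I would invoke the unbalanced condition \eqref{unbalanced-cond}: since $f(t,u)\ge f_{B}(u)$ by (H2), a suitable shift of the traveling wave $\phi_{B}(x-c_{B}t)$ of \eqref{main-eqn-bi-f-B}, with speed $c_{B}>0$, is a sub-solution of \eqref{main-eqn} and, once placed below $u$, forces $X(t+\tau)-X(t)\ge c_{\min}\tau$. Having pinned $X(t+\tau)-X(t)\in[c_{\min}\tau,c_{\max}\tau]$ for unit $\tau$ and all $t$, I would obtain $\tilde X$ by mollifying $X$ (convolving with a smooth bump), producing a $C^{1}$ function at bounded distance from $X$ with $\dot{\tilde X}\in[c_{\min},c_{\max}]$.

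For part (ii) I would work from the Duhamel representation obtained by treating $u_{t}=-u+\big(J\ast u+f(t,u)\big)$ as a scalar ODE in $t$ for each fixed $x$; boundedness $0<u<1$ kills the boundary term at $-\infty$ and yields
\begin{equation*}
u(t,x)=\int_{-\infty}^{t}e^{-(t-s)}\Big[(J\ast u)(s,x)+f\big(s,u(s,x)\big)\Big]\,ds.
\end{equation*}
The plan is to prove a uniform spatial Lipschitz bound first and then upgrade to $C^{1}$. For the Lipschitz bound I would estimate the difference quotient $w^{h}(t,x)=\big(u(t,x+h)-u(t,x)\big)/h$, which solves the linear nonlocal equation $w^{h}_{t}=J\ast w^{h}-w^{h}+a(t,x)\,w^{h}$ with $|a|=|f_{u}|$ bounded by (H2). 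A direct Gronwall estimate only produces forward-in-time growth, so instead I would localize using the transition-front structure: in the bounded interface region (width controlled by \eqref{bounded-interface-width-defn}) boundedness of the coefficients gives a bound, while in the two tails, where $u$ is close to $0$ or $1$, assumption (H3) supplies $f_{u}\le-\beta_{0}$ or $f_{u}\le-\beta_{1}$, so the zeroth-order coefficient $a-1\le-1-\beta<0$ dominates the spreading of $J\ast(\cdot)$ and forces $w^{h}$ to stay bounded (indeed to decay) there, uniformly in $h$. Passing $h\to0$ gives a uniform Lipschitz constant.

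With $u$ uniformly Lipschitz in $x$, the source $J\ast u+f(t,u)$ is uniformly Lipschitz in $x$ (using $J'\in L^{1}$, which follows from (H1) with $\ga=0$), so differentiating the representation under the integral sign leads to the fixed-point equation
\begin{equation*}
v(t,x)=\int_{-\infty}^{t}e^{-(t-s)}\Big[(J'\ast u)(s,x)+f_{u}\big(s,u(s,x)\big)\,v(s,x)\Big]\,ds
\end{equation*}
for $v=u_{x}$. The same tail-damping-plus-bounded-region estimate shows this linear equation has a unique bounded solution, and a standard approximation argument identifies it with $u_{x}$, giving $\sup_{(t,x)\in\R\times\R}|u_{x}(t,x)|<\infty$. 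The main obstacle throughout is securing these bounds \emph{uniformly in $t$ on the whole line}: the absence of regularization in $J\ast u-u$ rules out parabolic smoothing, and the resolution rests on trading forward-time Gronwall growth for the exponential damping of the stable states $0$ and $1$ furnished by (H3), combined with the bounded interface width.
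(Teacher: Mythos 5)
Your opening move --- verifying that (H1)--(H3) match the hypotheses of \cite{ShSh15-regularity} and invoking \cite[Theorem 1.1 and Corollary 1.6]{ShSh15-regularity} --- is in fact the entirety of the paper's own proof: no independent argument is given here, so on that level you agree with the paper. The issue is with your self-contained sketch, which in part (ii) misses the one mechanism that makes the statement true. You propose a purely spatial decomposition at each time $t$: in the tails use $f_u\le-\beta_0$ or $f_u\le-\beta_1$ from (H3), and ``in the bounded interface region boundedness of the coefficients gives a bound.'' Over an infinite time horizon, boundedness of the coefficient $a(t,x)$ in the equation $w^h_t=J\ast w^h-w^h+a\,w^h$ gives nothing: $a$ is genuinely positive somewhere in the interface zone ($f$ is bistable), so a point that stays in that zone sees exponential growth $e^{Ct}$, and the nonlocal term feeds this growth into the tails, so the tail estimate is circular as well. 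The actual mechanism --- which the paper flags explicitly by remarking that the proposition ``highly relies on the unbalanced condition \eqref{unbalanced-cond}'' and fails when \eqref{main-eqn-bi-f-B} admits discontinuous zero-speed waves --- is dynamic: because the interface moves rightward with speed at least $c_{\min}>0$ (part (i)), each fixed $x$ spends a \emph{uniformly bounded} amount of backward time in the region $|x-X(s)|\le M$, and for all earlier $s$ it sits in the right tail where $a\le-\beta_0$. The growth window has length $\le 2M/c_{\min}$ independent of $(t,x)$, and only then does the damping close the estimate. Your sketch never uses $c_{\min}>0$ in part (ii); an argument with that shape would equally ``prove'' the result for balanced nonlinearities, where it is false. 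The same objection applies to your fixed-point equation for $v=u_x$: since $\int_{-\infty}^{t}e^{-(t-s)}\,ds=1$ and $\sup|f_u|$ need not be less than $1$, the map is not a contraction on bounded functions without exactly this moving-frame input.

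Part (i) is closer to correct but glosses over an ordering problem. Both $\phi_B(\cdot-c_Bt)$ and $u(t,\cdot)$ tend to $0$ at $+\infty$ (and to $1$ at $-\infty$) with no a priori comparison of rates, so no pure shift of $\phi_B$ can be placed below $u(t_0,\cdot)$, and likewise no exponential $\min\{1,Ae^{-\lambda(x-ct)}\}$ can be placed above it. The lower bound requires the Fife--McLeod perturbed sub-solution $\phi_B(x-\xi(t))-q(t)$ with $q(t)=q_0e^{-\omega t}$, which can be initialized below $u$ because it is $\le 0$ in the right tail and $\le 1-q_0$ elsewhere; the upper bound needs the one-sided comparison principle of Proposition \ref{prop-app-comparison}(i) together with a continuation argument guaranteeing $u\le\theta_0$ ahead of the front so that $f_u\le-\beta_0$ is actually available where the exponential super-solution is used. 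These are repairable with standard machinery from \cite{ShSh15-regularity}, but as written the phrase ``once placed below $u$'' hides the step that requires work. The mollification $\tilde X(t)=\tau_0^{-1}\int_t^{t+\tau_0}X(s)\,ds$ at the end is fine.
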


We point out that Proposition \ref{prop-regularity-of-tf} highly relies on the unbalanced condition \eqref{unbalanced-cond}. Replacing \eqref{unbalanced-cond} by the speed of traveling waves of \eqref{main-eqn-bi-f-B} being nonnegative, Proposition \ref{prop-regularity-of-tf} fails when \eqref{main-eqn-bi-f-B} admits discontinuous traveling waves with zero speed (see \cite{BaFiReWa97} for the sufficient and necessary condition). Whether Proposition \ref{prop-regularity-of-tf} holds when \eqref{main-eqn-bi-f-B} admits continuous traveling waves with zero speed leaves an interesting open question.

By Proposition \ref{prop-regularity-of-tf}, without loss of generality, we may then assume that the interface location function $X(t)$ of a transition front
$u(t,x)$ is continuously differentiable and satisfies
\begin{equation}
\label{interface-eq}
c_{\min}\leq\dot{X}(t)\leq c_{\max},\quad \forall t\in\R
\end{equation}
for some $0<c_{\min}\leq c_{\max}<\infty$. This shows the rightward propagation nature of transition fronts in the sense of Definition \ref{defn-tf}.

By general semigroup theory (see e.g. \cite{Paz}) and comparison principles,  for any $u_0\in C_{\rm unif}^b(\R,\R)$ and $t_{0}\in\R$, \eqref{main-eqn} has a unique global solution $u(t,\cdot;t_{0},u_0)\in C_{\rm unif}^b(\R,\R)$ with $u(t_{0},\cdot;t_{0},u_0)=u_0$, where
$$
C_{\rm unif}^b(\R,\R)=\left\{u\in C(\R,\R)\,:\, u\,\, \text{is uniformly continuous on}\,\,\R\,\,\text{and}\,\, \sup_{x\in\R}|u(x)|<\infty\right\}
$$
equipped with the norm $\|u\|:=\sup_{x\in\R}|u(x)|$.

Throughout this paper, we assume (H1)-(H3).
Among others, we prove in this paper the following results.

\begin{itemize}

\item[\rm(i)] (Uniform steepness)  Assume that $u(t,x)$ is
 a space non-increasing transition front of \eqref{main-eqn} with $X(t)$ being its interface location function.
For any $M>0$, there holds $$\sup_{t\in\R}\sup_{|x-X(t)|\leq M}u_{x}(t,x)<0$$
 (see Theorem \ref{thm-uniform-steepness}).

\item[\rm(ii)] (Uniform exponential stability) Assume that $u(t,x)$ is
a  space non-increasing transition front of \eqref{main-eqn}. Let $\{u_{t_{0}}\}_{t_{0}\in\R}$ be a family of uniformly continuous initial data satisfying
\begin{equation*}
u(t_{0},x-\xi_{0}^{-})-\mu_{0}\leq u_{t_{0}}(x)\leq u(t_{0},x-\xi_{0}^{+})+\mu_{0},\quad x\in\R,\,\,t_{0}\in\R
\end{equation*}
for $\xi_{0}^{\pm}\in\R$ and $\mu_{0}\in(0,\min\{\theta_{0},1-\theta_{1}\})$ being independent of $t_{0}\in\R$. Then, there exist $t_{0}$-independent constants $C>0$ and $\om_{*}>0$, and a family of shifts $\{\xi_{t_{0}}\}_{t_{0}\in\R}\subset\R$ satisfying $\sup_{t_{0}\in\R}|\xi_{t_{0}}|<\infty$ such that
\begin{equation*}
\sup_{x\in\R}|u(t,x;t_{0},u_{t_{0}})-u(t,x-\xi_{t_{0}})|\leq Ce^{-\om_{*}(t-t_{0})}
\end{equation*}
for all $t\geq t_{0}$ and $t_{0}\in\R$ (see Theorem \ref{thm-stability}).

\item[\rm(iii)] (Exponential decaying estimates) Assume that $u(t,x)$ is
 a space non-increasing transition front of \eqref{main-eqn} with  $X(t)$ being its interface location function.
 There exist two exponents $c_{\pm}>0$ and two shifts $h_{\pm}>0$ such that
\begin{equation*}
u(t,x+X(t)+h_{+})\leq e^{-c_{+}x}\quad\text{and}\quad u(t,x+X(t)-h_{-})\geq 1-e^{c_{-}x}
\end{equation*}
for all $(t,x)\in\R\times\R$ (see Theorem \ref{thm-expo-decaying}).

\item[\rm(iv)] (Uniqueness and monotonicity)
If $u(t,x)$ and $v(t,x)$ are two transition fronts  of \eqref{main-eqn}
with $u(t,x)$ being non-increasing in $x$, then there exists a shift $\xi\in\R$ such that
$$v(t,x)=u(t,x+\xi),\quad \forall(t,x)\in\R\times\R$$
and hence $v(t,x)$ is also non-increasing in $x$ (see Theorem \ref{thm-uniqeness}).

\item[\rm(v)] (Periodicity) If $u(t,x)$ is a space non-increasing transition front of \eqref{main-eqn} and, in addition, $f(t,u)$ is periodic in $t$, then $u(t,x)$ is a periodic traveling wave (see Theorem \ref{thm-asymptotic-speeds}(i)).

\item[\rm(vi)] (Asymptotic speeds) If $u(t,x)$ is a space non-increasing transition front of \eqref{main-eqn} and, in addition, $f(t,u)$ is uniquely ergodic, then $\lim_{t\to\pm\infty}\frac{X(t)}{t}$ exist (see Theorem \ref{thm-asymptotic-speeds}(ii)).

\item[\rm (vii)] (Existence)  Assume, in addition, (H4). There is a space non-increasing transition front of \eqref{main-eqn} (see Theorem \ref{thm-transition-front}).
\end{itemize}

We make some remarks about the above results.
\begin{enumerate}
\item From (i)-(iv), we see that if \eqref{main-eqn} admit a space non-increasing transition front under assumptions (H1)-(H3), then transition fronts of \eqref{main-eqn} are non-increasing in space, exponentially stable, exponentially decaying and unique up to space shifts. We point out that it can be shown that any transition front of corresponding reaction-diffusion equations in time heterogeneous media is non-increasing in space (see e.g. \cite{Sh99-1,ShSh14-1}), while it is not an easy job for nonlocal dispersal equations partly due to the lack of Harnack's inequality.

\item Note that the nonlinearity $f(t,u)$ satisfying (H2) and (H3) are bistable only in the general sense. For each $t\in\R$, $f(t,\cdot)$ may not be of bistable type. In particular, multiple zeros between $0$ and $1$ are allowed. It is not known (even in the reaction-diffusion equation case) whether the assumptions (H2) and (H3) on $f(t,u)$ are sufficient for the existence of space non-increasing transition fronts, which is guaranteed under the additional assumption (H4). This is given in (vii).

\item The establishment of the uniform exponential stability in (ii) in this general form is the most important result of the present paper, and the applicability of the uniform exponential stability to arbitrary transition fronts and other families of initial data is the key to the proof of (iii) and (iv), and then to that of (v) and (vi). We remark that for reaction-diffusion equations, the usual exponential stability instead of the uniform exponential stability, together with standard arguments using parabolic regularity, comparison principles and Harnack's inequality, are sufficient for various qualitative properties such as exponential decaying estimate and uniqueness (see e.g. \cite{MNRR09,ShSh14-1}). But for nonlocal equations, the standard arguments do not work very well, since we are lack of enough regularity, the comparison principles are not as flexible as that for reaction-diffusion equations, and Harnack's inequality is not known in the nonlocal case.

\item The proof of (vii) actually does not need the unbalanced condition \eqref{unbalanced-cond}. This is because we take a perturbation approach, that is, we consider the perturbed equation
\begin{equation}\label{eqn-perturbed-intro}
u_{t}=J\ast u-u+\ep u_{xx}+f(t,u),\quad (t,x)\in\R\times\R,
\end{equation}
and take advantage of the fact that the existence of transition fronts of \eqref{eqn-perturbed-intro} does not need \eqref{unbalanced-cond}. Of course, without \eqref{unbalanced-cond}, constructed transition fronts of \eqref{main-eqn} may not be continuous in space as mentioned after Proposition \ref{prop-regularity-of-tf}. It would be interesting to study qualitative properties of transition fronts in the absence of \eqref{unbalanced-cond}.

\item It should be pointed out that (H2) can also be applied to a general bistable nonlinearity $f(t,u)$ with the speed of traveling waves of $u_{t}=J\ast u-u+f_{\tilde{B}}(u)$ being negative. In fact, let $v(t,x)=1-u(t,x)$. Then, $v(t,x)$ satisfies
$$
v_t=J\ast v-v+\tilde f(t,v),\quad (t,x)\in\R\times\R,
$$
where $\tilde f(t,v)=-f(t,1-v)$. Hence
$$
\tilde f_{B}(v)\le \tilde f(t,v)\le \tilde f_{\tilde B}(v),\quad (t,v)\in\R\times\R\times[0,1]
$$
where $\tilde f_{B}(v)=-f_{\tilde B}(1-v)$ and $\tilde f_{\tilde B}(v)=-f_{B}(1-v)$.
Clearly, $\tilde f_{B}(\cdot)$ and $\tilde f_{\tilde B}(\cdot)$ are two standard bistable nonlinearities and the speed of traveling waves of $u_{t}=J\ast u-u+\tilde{f}_{B}(u)$ is positive.

\end{enumerate}

We remark that transition fronts can be defined in the same way for more general equations, say,
\begin{equation}\label{eqn-general}
u_{t}=J\ast u-u+f(t,x,u).
\end{equation}
Equation \eqref{eqn-general} in  various homogeneous media, i.e., $f(t,x,u)=f(u)$ with various types of nonlinearity $f(\cdot)$, is well studied. We refer to \cite{BaFiReWa97,CaCh04,Ch97,CoDu05,CoDu07,Sc80} and references therein for results concerning traveling waves. There are also some results concerning periodic traveling waves in periodic media of monostable type (see e.g. \cite{CDM13,DiLi15,RaShZh,ShZh10,ShZh12-1,ShZh12-2}). The study of \eqref{eqn-general} in general heterogeneous media is very recent and results concerning front propagation are very limited. In \cite{BCV14}, Berestycki, Coville and Vo studied principal eigenvalue, positive solution and long-time behavior of solutions of \eqref{eqn-general} in the space heterogeneous monostable media. In \cite{LiZl14},  Lim and Zlato\v{s} also studied \eqref{eqn-general} in the space heterogeneous monostable media, but with different settings, and proved the existence of transition fronts. In \cite{BeRo}, Berestycki and Rodriguez studied propagation and blocking phenomenon of \eqref{eqn-general} with barrier nonlinearities in space heterogeneous media of bistable type.  In \cite{ShSh14-2,ShSh14-3}, the authors of the present paper studied \eqref{eqn-general} in time heterogeneous media of ignition type and proved the existence, regularity and stability of transition fronts.

We end the introduction by mentioning some relevant results about reaction-diffusion equations and discrete equations in bistable media, that is,
\begin{equation}\label{eqn-random}
u_{t}=u_{xx}+f(t,x,u),\quad (t,x)\in\R\times\R
\end{equation}
and
\begin{equation}\label{eqn-discrete}
\dot{u}_{i}=u_{i+1}-2u_{i}+u_{i-1}+f(t,i,u),\quad (t,i)\in\R\times\Z,
\end{equation}
where $f$ in both cases is of bistable type. As a classical model, \eqref{eqn-random} has been attracting extensive studies and results concerning front propagation are quite complete except in the most general case, i.e., $f(t,x,u)$ depends generally on both $t$ and $x$ (see \cite{AlBaCh99,ArWe75,ArWe78,BeHa02,DiHaZh14,DiHaZh15,DuMa10,FiMc77,Ha14,LeKe00,MuZh13,Na14,Po11,Po15,Sh99-1,Sh99-2,Sh04,Sh06,We02,Zl06,Zl15}) and references therein. As \eqref{eqn-general} in the bistable case, not a lot is known about \eqref{eqn-discrete}. We refer the readers to \cite{CMV99,CMS98,Ma99,Zi91,Zi92} and references therein for works in homogeneous media, and to \cite{ChGuWu08,Sh03} for works in periodic media.

The rest of the paper is organized as follows. In Section 2, we focus our study on uniform steepness of space non-increasing transition fronts of  \eqref{main-eqn}. We investigate uniform exponential  stability and exponential decaying estimates of space non-increasing transition fronts
of \eqref{main-eqn} in Sections 3 and 4, respectively. In Section \ref{sec-uniqueness}, we show that any transition front of equation \eqref{main-eqn}, after certain space shift, coincides with a space non-increasing transition front (if it exists). In Section \ref{sec-ptw}, under the additional time periodic assumption on the nonlinearity, we show that any transition front must be a periodic traveling wave. Under the assumption that the nonlinearity $f(t,u)$ is uniquely ergodic, we show that asymptotic speeds of transition fronts exist. In Section \ref{sec-construction-tf}, we prove the existence of space non-increasing transition fronts of \eqref{main-eqn}. In Appendix \ref{sec-app-cp}, we state some comparison principles.



\section{Uniform steepness of space non-increasing transition fronts}\label{sec-uniform-steepness}

In this section, we study the uniform steepness of space non-increasing transition fronts of
 \eqref{main-eqn}. Throughout this section, we assume (H1)-(H3).

Suppose that $u(t,x)$ is a transition front. For $\la\in(0,1)$, let $X_{\la}^{-}(t)$ and $X_{\la}^{+}(t)$ be the leftmost and rightmost interface locations at $\la$, that is,
\begin{equation}\label{defn-interface-locations}
X_{\la}^{-}(t)=\inf\{x\in\R|u(t,x)\leq\la\}\quad\text{and}\quad X_{\la}^{+}(t)=\sup\{x\in\R|u(t,x)\geq\la\}.
\end{equation}
Trivially, $X_{\la}^{-}(t)\leq X_{\la}^{+}(t)$ and $X^{\pm}(t)$ are non-increasing in $\la$. We see that it may happen that $u(t,X_{\la}^{-}(t))>\la$ or $u(t,X_{\la}^{+}(t))<\la$ due to possible jumps. But, it is clear that
$u(t,x)>\la$ for $x<X_{\la}^{-}(t)$ and $u(t,x)<\la$ for $x>X_{\la}^{+}(t)$.

In what follows in this section, $u(t,x)$ will be an arbitrary transition front of \eqref{main-eqn} that is non-increasing in space, i.e., $u_{x}(t,x)\leq0$ for $(t,x)\in\R\times\R$ (recall that by Proposition \ref{prop-regularity-of-tf} any transition front is continuously differentiable in space). By comparison principle, $u(t,x)$ is decreasing in $x$ for any $t\in\R$. As a result, for any $\la\in(0,1)$, the leftmost and rightmost interface locations coincide, i.e., $X_{\la}^{+}(t)=X_{\la}^{-}(t)$, which will be denoted by $X_{\la}(t)$. In particular, $u(t,X_{\la}(t))=\la$. Let $X(t)$ be the interface location function corresponding to $u(t,x)$. Without loss of generality, we assume that $X(t)$ satisfies \eqref{interface-eq}.

The main result in this section is given in

\begin{thm}\label{thm-uniform-steepness}
For any $M>0$, there holds
$$\sup_{t\in\R}\sup_{|x-X(t)|\leq M}u_{x}(t,x)<0.$$
\end{thm}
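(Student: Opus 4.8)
The plan is to reduce the statement to a pointwise strict positivity that is then made uniform in $t$ by compactness. Set $w=-u_x$. Since $u$ is non-increasing in $x$ and, by Proposition \ref{prop-regularity-of-tf}, continuously differentiable in $x$ with $\sup|u_x|<\infty$, we have $w\geq0$ and $w$ bounded. Differentiating \eqref{main-eqn} in $x$ (or, to avoid differentiating $u_x$ in $t$, working with the nonnegative difference quotients $\tfrac{u(t,x)-u(t,x+h)}{h}$ and letting $h\downarrow0$), $w$ solves the linear nonlocal equation
\begin{equation*}
w_t=J\ast w-w+f_u(t,u(t,x))\,w,\quad(t,x)\in\R\times\R.
\end{equation*}
The theorem is equivalent to the uniform lower bound $\inf_{t\in\R}\inf_{|x-X(t)|\leq M}w(t,x)>0$, so the whole issue is to upgrade positivity of $w$ into a bound uniform in $t$.

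First I would prove a soft strong maximum principle: \emph{any} space non-increasing transition front $v$ (of \eqref{main-eqn}, or of any limit equation arising below) satisfies $-v_x>0$ everywhere. Suppose $w(t_0,x_0)=0$ at some point, where $w:=-v_x\geq0$. Put $g(t)=w(t,x_0)$ and $a(t)=1-f_u(t,v(t,x_0))$, bounded by (H2). The equation gives $\tfrac{d}{dt}\big(g(t)e^{\int_0^t a(s)\,ds}\big)=e^{\int_0^t a(s)\,ds}[J\ast w](t,x_0)\geq0$, since $J\geq0$ and $w\geq0$. Thus $t\mapsto g(t)e^{\int_0^t a(s)\,ds}$ is nondecreasing, nonnegative, and vanishes at $t_0$, hence vanishes for all $t\leq t_0$; so $g\equiv0$ and $[J\ast w](t,x_0)=0$ for $t\leq t_0$. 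As $J\geq0$ and $w\geq0$, this forces $w(t,\cdot)=0$ on the open set $\{y:J(x_0-y)>0\}$ for all $t\leq t_0$. Iterating the spreading (the resulting intervals have radii growing linearly in the number of steps while their centers stay at fixed spacing, so after finitely many steps they merge and cover $\R$) yields $w\equiv0$ on $\R\times(-\infty,t_0]$, i.e. $v(t,\cdot)$ constant in $x$ for $t\leq t_0$, contradicting $v(t,-\infty)=1$, $v(t,+\infty)=0$. Hence $w>0$ everywhere.

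With pointwise positivity in hand, I would obtain the uniform bound by shift-and-compactness. Suppose for contradiction $\inf_{t}\inf_{|x-X(t)|\leq M}w=0$, and choose $(t_n,x_n)$ with $|x_n-X(t_n)|\leq M$ and $w(t_n,x_n)\to0$. Consider the shifts $u_n(t,x)=u(t+t_n,x+X(t_n))$, space non-increasing transition fronts of $u_t=J\ast u-u+f(t+t_n,u)$ with interface locations $X(t+t_n)-X(t_n)$ vanishing at $t=0$ and still obeying \eqref{interface-eq}. By (H2) the time translates $f(\cdot+t_n,\cdot)$ are precompact and converge, along a subsequence and locally uniformly, to some $f_*$ satisfying (H2)--(H3) with the \emph{same} bounding functions $f_B,f_{\tilde B}$ and constants; using Proposition \ref{prop-regularity-of-tf} and the equation for the necessary $C^1_{\mathrm{loc}}$ compactness, $u_n$ converges subsequentially to an entire solution $u_*$ of $u_t=J\ast u-u+f_*(t,u)$. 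The uniform-in-$t$ bounded interface width \eqref{bounded-interface-width-defn} is preserved under these shifts and limits, and the interface passing through the origin at $t=0$ keeps $u_*(0,\cdot)$ a genuine (non-constant) front; thus $u_*$ is a space non-increasing transition front of the limit equation. Writing $x_*=\lim(x_n-X(t_n))\in[-M,M]$, the $C^1_{\mathrm{loc}}$ convergence gives $-(u_*)_x(0,x_*)=0$, contradicting the pointwise positivity above applied to $u_*$. This contradiction proves the theorem.

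The main obstacle is precisely what the introduction flags: with no Harnack inequality or smoothing available for \eqref{main-eqn}, strict positivity and its propagation must be extracted softly, from the sign structure of the linear equation ($J\geq0$, $w\geq0$) via the integrating-factor ODE along vertical lines together with convolution-spreading in space. The second delicate point is the limiting procedure: one must keep the time translates of $f$ inside the class (H2)--(H3) and, crucially, prevent the limit $u_*$ from degenerating to a constant — this is secured by the uniformity in $t$ of the interface width and by the normalization placing the shifted interface at the origin at time $0$.
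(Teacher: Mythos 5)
Your first step (the soft strong maximum principle $-v_x>0$ everywhere, obtained from the integrating factor along vertical lines plus the convolution-spreading of the zero set of $w=-v_x$) is correct and is a legitimate way to get \emph{pointwise} strict steepness. The problem is the second step. To convert pointwise positivity into a bound uniform in $t$ you invoke ``$C^1_{\mathrm{loc}}$ compactness'' of the shifted fronts $u_n(t,x)=u(t+t_n,x+X(t_n))$ and pass the degeneracy $\partial_xu_n(0,\xi_n)\to0$ to the limit front $u_*$. That compactness is not available from the hypotheses. Proposition \ref{prop-regularity-of-tf} gives only that $u_x$ exists, is continuous in $x$, and is bounded; it gives no uniform modulus of continuity of $u_x$ in $x$ (no bound on $u_{xx}$ is possible here: (H1) does not assume $J\in C^2$ and (H2) does not assume $f_{uu}$ exists, and the nonlocal equation has no smoothing). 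Without equicontinuity of $\{\partial_xu_n\}$, Arzel\`a--Ascoli yields at best locally uniform convergence of $u_n$ (the $u_n$ are uniformly Lipschitz in $(t,x)$) together with convergence of \emph{integrals} of $\partial_x u_n$, and a sequence of nonnegative functions whose values at points $\xi_n\to x_*$ tend to $0$ can perfectly well converge in this weak sense to a limit that is strictly positive at $x_*$. So the contradiction $-(u_*)_x(0,x_*)=0$ is not justified, and this is precisely the regularity obstruction the introduction warns about.

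The paper closes exactly this gap quantitatively rather than by compactness. Its Lemma \ref{lem-tech-1234567} shows, by iterating the inequality $\tilde v_t\le J\ast\tilde v$ to produce the kernels $J^N$ (which are strictly positive on any prescribed bounded interval for $N$ large), that for an ordered sub/super-solution pair one has $u_1(t,x)-u_2(t,x)\le C\int_{z-h}^{z+h}[u_1(t_0,y)-u_2(t_0,y)]\,dy$ with $C>0$ locally uniformly positive. Applied to $u(\cdot,\cdot+\ep)$ versus $u$ and divided by $\ep$ (Corollary \ref{cor-tech-1234567}), this gives $u_x(t,x)\le C\int_{X(t_0)-h}^{X(t_0)+h}u_x(t_0,y)\,dy=C\,[u(t_0,X(t_0)+h)-u(t_0,X(t_0)-h)]$, and the bracket is bounded above by $-\tfrac12$ uniformly in $t_0$ once $h$ is chosen via the bounded interface width. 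This yields the uniform negative bound directly, with no limiting procedure and no modulus of continuity of $u_x$. If you want to keep the spirit of your argument, the fix is to replace the compactness step by this kernel estimate: your spreading computation in Step 1 is essentially the qualitative shadow of the quantitative statement that $J^N$ is uniformly positive on compact intervals.
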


To prove Theorem \ref{thm-uniform-steepness}, we first prove two lemmas.
The first lemma  follows  directly from
 the definition of transition fronts.

\begin{lem}\label{lem-bounded-interface-width}
For any $\la\in(0,1)$, there hold
$$\sup_{t\in\R}|X(t)-X^{\pm}_{\la}(t)|<\infty.$$
\end{lem}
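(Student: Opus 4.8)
The plan is to deduce the boundedness of $\sup_{t}|X(t)-X_{\la}^{\pm}(t)|$ directly from the bounded interface width property \eqref{bounded-interface-width-defn}, which is the defining feature of a transition front. First I would recall that since $u(t,x)$ is a transition front with interface location function $X(t)$, the uniform limits in Definition \ref{defn-tf} hold; equivalently, \eqref{bounded-interface-width-defn} is satisfied. The strategy is to pin down $X(t)$ and the level-set locations $X_{\la}^{\pm}(t)$ within a common bounded window around the interface, so their difference is uniformly controlled.

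The key steps, in order, are as follows. First, I would fix the level $\la\in(0,1)$ and choose auxiliary levels $0<\ep_{1}\leq\la\leq\ep_{2}<1$ (for instance $\ep_{1}=\ep_{2}=\la$, or a slightly enlarged interval); by \eqref{bounded-interface-width-defn} the set $\{x:\ep_{1}\leq u(t,x)\leq\ep_{2}\}$ has diameter bounded by some $M_{0}<\infty$ uniformly in $t$. Second, I would locate this level set relative to $X(t)$: using the uniform limits $u(t,x+X(t))\to1$ as $x\to-\infty$ and $u(t,x+X(t))\to0$ as $x\to+\infty$ (uniformly in $t$), one can find a fixed radius $R>0$, independent of $t$, such that $u(t,x+X(t))>\ep_{2}$ whenever $x<-R$ and $u(t,x+X(t))<\ep_{1}$ whenever $x>R$. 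Consequently every point of the level set $\{x:\ep_{1}\leq u(t,x)\leq\ep_{2}\}$ lies within $[X(t)-R,X(t)+R]$. Third, I would observe from the definitions \eqref{defn-interface-locations} that $X_{\la}^{-}(t)$ and $X_{\la}^{+}(t)$ are sandwiched by this level set: by the uniform limits, $u(t,x)>\ep_{2}\geq\la$ for $x<X(t)-R$ and $u(t,x)<\ep_{1}\leq\la$ for $x>X(t)+R$, so that $X(t)-R\leq X_{\la}^{-}(t)\leq X_{\la}^{+}(t)\leq X(t)+R$. This immediately yields $|X(t)-X_{\la}^{\pm}(t)|\leq R$ for all $t\in\R$, and taking the supremum over $t$ finishes the proof.

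I do not expect a serious obstacle here, since the result is essentially a repackaging of the uniform-in-$t$ convergence built into the definition of a transition front. The only point requiring a little care is the \emph{uniformity} in $t$: one must extract the radius $R$ from the uniform limits rather than from a $t$-by-$t$ argument, so that the bound $R$ does not degenerate as $t$ varies. This is exactly where the definition's insistence on convergence uniform in $t$ (equivalently, the uniform bound in \eqref{bounded-interface-width-defn}) is used. A minor subtlety is that, because of possible jumps, one may have $u(t,X_{\la}^{\pm}(t))\neq\la$; but the argument only uses the one-sided inequalities $u(t,x)>\la$ for $x<X_{\la}^{-}(t)$ and $u(t,x)<\la$ for $x>X_{\la}^{+}(t)$ noted in the text just before the statement, so this causes no difficulty.
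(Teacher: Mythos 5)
Your proposal is correct and follows essentially the same route as the paper: both extract, from the uniform-in-$t$ limits $u(t,x+X(t))\to 1$ as $x\to-\infty$ and $u(t,x+X(t))\to 0$ as $x\to\infty$, a $t$-independent radius so that $u>\la$ to the left of $X(t)$ minus that radius and $u<\la$ to the right, and then sandwich $X_{\la}^{-}(t)\leq X_{\la}^{+}(t)$ via the definitions in \eqref{defn-interface-locations}. Your opening appeal to the diameter bound \eqref{bounded-interface-width-defn} and the auxiliary levels $\ep_{1},\ep_{2}$ are harmless but unnecessary, as the rest of your argument (and the paper's proof) works directly with the level $\la$ itself.
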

\begin{proof}
By the uniform-in-$t$ limits $\lim_{x\to-\infty}u(t,x+X(t))=1$ and $\lim_{x\to\infty}u(t,x+X(t))=0$, there exist $x_{1}$ and $x_{2}$ such that $u(t,x+X(t))>\la$ for all $x\leq x_{1}$ and $t\in\R$, and $u(t,x+X(t))<\la$ for all $x\geq x_{2}$ and $t\in\R$. It then follows from the definition of $X_{\la}^{\pm}(t)$ that $x_{1}+X(t)\leq X_{\la}^{-}(t)$ and $x_{2}+X(t)\geq X_{\la}^{+}(t)$ for all $t\in\R$. In particular,
\begin{equation*}
x_{1}+X(t)\leq X_{\la}^{-}(t)\leq X_{\la}^{+}(t)\leq x_{2}+X(t),\quad t\in\R.
\end{equation*}
This completes the proof.
\end{proof}

We remark that the monotonicity of $u(t,x)$ in $x$ is not required in the above lemma, which is true for an arbitrary transition front. That is why we used $X^{\pm}_{\la}(t)$ instead of $X_{\la}(t)$.

As a simple consequence of implicit function theorem, the equation $u(t,X_{\la}(t))=\la$, Lemma \ref{lem-bounded-interface-width} and Theorem \ref{thm-uniform-steepness}, we find

\begin{cor}\label{cor-diff-interface-mono}
For any $\la\in(0,1)$, $X_{\la}(t)$ is continuously differentiable and satisfies
\begin{equation*}
\dot{X}_{\la}(t)=-\frac{u_{t}(t,X_{\la}(t))}{u_{x}(t,X_{\la}(t))},\quad\forall t\in\R\quad\text{and}\quad\sup_{t\in\R}|\dot{X}_{\la}(t)|<\infty.
\end{equation*}
\end{cor}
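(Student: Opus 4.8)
The goal is to prove Corollary~\ref{cor-diff-interface-mono}, which asserts that the level-set location $X_\la(t)$ (defined implicitly by $u(t,X_\la(t))=\la$) is continuously differentiable, obeys the formula $\dot X_\la(t)=-u_t(t,X_\la(t))/u_x(t,X_\la(t))$, and has uniformly bounded derivative.

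The plan is to apply the implicit function theorem to the defining relation $F(t,x):=u(t,x)-\la=0$. First I would verify the hypotheses at an arbitrary point $(t_0,X_\la(t_0))$. Continuous differentiability of $u$ in $x$ is supplied by Proposition~\ref{prop-regularity-of-tf}(ii); differentiability in $t$ follows from the equation \eqref{main-eqn} itself, since $u_t=J\ast u-u+f(t,u)$ and the right-hand side is continuous in $(t,x)$ (the convolution term is continuous because $u$ is bounded and continuous, and $f$ is $C^1$). Thus $F$ is $C^1$ jointly. The crucial nondegeneracy condition is $F_x(t_0,X_\la(t_0))=u_x(t_0,X_\la(t_0))\neq 0$, and this is exactly where I invoke the uniform steepness of Theorem~\ref{thm-uniform-steepness}. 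Indeed, by Lemma~\ref{lem-bounded-interface-width} the quantity $|X_\la(t)-X(t)|$ is bounded by some $M=M(\la)$ uniformly in $t$, so every point $(t,X_\la(t))$ lies in the region $|x-X(t)|\leq M$ where Theorem~\ref{thm-uniform-steepness} gives a strictly negative uniform upper bound on $u_x$. In particular $u_x(t_0,X_\la(t_0))<0$, so the implicit function theorem applies and yields a $C^1$ function solving $u(t,X_\la(t))=\la$ near $t_0$, with derivative given by the stated quotient $\dot X_\la(t)=-u_t(t,X_\la(t))/u_x(t,X_\la(t))$. Since $X_\la(t)$ is, by monotonicity of $u$ in $x$, the unique solution of this equation, the local solution coincides with $X_\la$ and hence $X_\la$ is globally $C^1$.

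For the uniform bound on $\dot X_\la$, I would estimate the numerator and denominator of the quotient separately. The denominator is bounded away from zero from above: by Theorem~\ref{thm-uniform-steepness} applied with the fixed $M=M(\la)$ from Lemma~\ref{lem-bounded-interface-width}, there is a constant $\kappa=\kappa(\la)>0$ with $u_x(t,X_\la(t))\leq-\kappa<0$ for all $t$, so $|u_x(t,X_\la(t))|\geq\kappa$. For the numerator, I use the equation to write $u_t(t,X_\la(t))=[J\ast u](t,X_\la(t))-u(t,X_\la(t))+f(t,u(t,X_\la(t)))$, and bound each piece: $|J\ast u|\leq\|J\|_{L^1}\cdot\sup|u|\leq 1$ since $0<u<1$ and $\int_\R J=1$, the term $u$ is between $0$ and $1$, and $f(t,u(t,X_\la(t)))=f(t,\la)$ is bounded uniformly in $t$ by the sup bound on $f$ in (H2). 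This gives a uniform bound $|u_t(t,X_\la(t))|\leq C(\la)$. Combining, $\sup_{t\in\R}|\dot X_\la(t)|\leq C(\la)/\kappa(\la)<\infty$.

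The main obstacle is essentially already resolved upstream: the content of the corollary rests entirely on the strict, \emph{uniform} negativity of $u_x$ along the level set provided by Theorem~\ref{thm-uniform-steepness}. Without the uniformity, the implicit function theorem would still give local $C^1$ regularity, but the uniform bound on $\dot X_\la$ would fail; this is precisely why the corollary is stated as a consequence of the theorem rather than of the mere regularity in Proposition~\ref{prop-regularity-of-tf}. The one bookkeeping point worth care is confirming that Lemma~\ref{lem-bounded-interface-width} supplies an $M$ independent of $t$ so that the single constant $\kappa(\la)$ from the theorem covers all points $(t,X_\la(t))$ simultaneously; the remaining estimates are routine applications of the boundedness hypotheses in (H1) and (H2).
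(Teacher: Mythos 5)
Your proposal is correct and follows exactly the route the paper intends: the paper states the corollary as "a simple consequence of implicit function theorem, the equation $u(t,X_{\la}(t))=\la$, Lemma \ref{lem-bounded-interface-width} and Theorem \ref{thm-uniform-steepness}," and you have filled in precisely those details (uniform nondegeneracy of $u_x$ along the level set via Lemma \ref{lem-bounded-interface-width} plus Theorem \ref{thm-uniform-steepness}, and the bound on $u_t$ read off from the equation). Nothing to correct.
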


Since $u_{t}(t,x)$ changes signs in general due to the time dependence of $f(t,u)$, $\dot{X}_{\la}(t)$ changes its signs. Thus, in general, transition fronts in the present case move to the right with oscillations.

The next lemma inspired by \cite[Theorem 5.1]{Ch97} and \cite[Lemma 3.2]{Sh99-1} is crucial to uniform steepness. We refer the reader to Appendix \ref{sec-app-cp} for comparison principles.

\begin{lem}\label{lem-tech-1234567}
Let $u_{1}(t,x;\tau)$ and $u_{2}(t,x;\tau)$ be sub-solution and super-solution of \eqref{main-eqn}, respectively, and satisfy
\begin{equation*}
-1\le u_{1}(t,x;\tau)\leq u_{2}(t,x;\tau)\le 2,\quad x\in\R,\quad t\geq\tau.
\end{equation*}
Then, for any $t>t_{0}\geq\tau$, $h>0$ and $z\in\R$, there holds
\begin{equation*}
u_{1}(t,x;\tau)-u_{2}(t,x;\tau)\leq C\int_{z-h}^{z+h}[u_{1}(t_{0},y;\tau)-u_{2}(t_{0},y;\tau)]dy,\quad x\in\R,
\end{equation*}
where $C=C(t-t_{0},|x-z|,h)>0$ satisfies
\begin{itemize}
\item[\rm(i)] $C\to0$ polynomially as $t-t_{0}\to0$ and $C\to0$ exponentially as $t-t_{0}\to\infty$;
\item[\rm(ii)] $C:(0,\infty)\times[0,\infty)\times(0,\infty)\to(0,\infty)$ is locally uniformly positive in the sense that for any $0<t_{1}<t_{2}<\infty$, $M_{1}>0$ and $h_{1}>0$, there holds
\begin{equation*}
\inf_{t\in[t_{1},t_{2}], M\in[0,M_{1}], h\in(0,h_{1}]} C(t,M,h)>0.
\end{equation*}
\end{itemize}
\end{lem}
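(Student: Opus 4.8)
The plan is to reduce the claimed upper bound on $u_1-u_2$ to a pointwise \emph{lower} bound for the nonnegative function $v:=u_2-u_1\ge0$, and then to obtain that lower bound by comparing $v$ with the solution of a linear nonlocal equation whose fundamental kernel I can write out explicitly. First I would subtract the defining inequalities: since $u_2$ is a super-solution and $u_1$ a sub-solution of \eqref{main-eqn}, the mean value theorem gives $f(t,u_2)-f(t,u_1)=f_u(t,\xi)v$ with $\xi\in[-1,2]$, so that, using the bound $L:=\sup_{(t,u)\in\R\times[-1,2]}|f_u(t,u)|<\infty$ from (H2) together with $v\ge0$,
\[
\partial_t v \ge J\ast v - v + f_u(t,\xi)v \ge J\ast v - (1+L)v,\qquad t>\tau .
\]
Thus $v$ (which is bounded, since $v\in[0,3]$) is a super-solution of the linear equation $\partial_t V = J\ast V-(1+L)V$.

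Next I would let $V$ solve that linear equation with $V(t_0,\cdot)=v(t_0,\cdot)$; the comparison principle of Appendix \ref{sec-app-cp} then yields $v(t,\cdot)\ge V(t,\cdot)$ for $t\ge t_0$. The advantage of passing to the linear equation is that $V$ is given explicitly by the exponential of the convolution operator,
\[
V(t,x)=e^{-(1+L)(t-t_0)}\sum_{n\ge0}\frac{(t-t_0)^n}{n!}\int_{\R}J^{*n}(x-y)\,v(t_0,y)\,dy,
\]
where $J^{*n}$ is the $n$-fold convolution of $J$ (with $J^{*0}=\delta_0$). Every term is nonnegative because $J\ge0$ and $v(t_0,\cdot)\ge0$, so discarding the $n=0$ term and restricting the integral to $[z-h,z+h]$ gives, with $\Gamma(s,\cdot):=e^{-(1+L)s}\sum_{n\ge1}\frac{s^n}{n!}J^{*n}$,
\[
v(t,x)\ge\int_{z-h}^{z+h}\Gamma(t-t_0,x-y)\,v(t_0,y)\,dy\ge\Big(\inf_{y\in[z-h,z+h]}\Gamma(t-t_0,x-y)\Big)\int_{z-h}^{z+h}v(t_0,y)\,dy .
\]
Setting $C:=\inf_{y\in[z-h,z+h]}\Gamma(t-t_0,x-y)$ and translating back through $u_1-u_2=-v$ gives exactly the asserted inequality; since $J$ is even, $\Gamma(s,\cdot)$ is even, so $C$ depends only on $t-t_0$, $|x-z|$ and $h$, as claimed.

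The heart of the matter is to prove that $\Gamma(s,x)>0$ for all $s>0$ and all $x\in\R$, for only then is $C>0$ and property (ii) available; equivalently, I must show that for each $x$ some convolution power satisfies $J^{*n}(x)>0$, i.e. the kernel is irreducible. Here I would exploit the symmetry in (H1): since $J\not\equiv0$ is continuous and even, $J^{*2}(0)=\int_{\R}J(y)^2\,dy>0$ and $J>0$ on neighborhoods of some $\pm x_0$, so the supports $\supp(J^{*n})$, being $n$-fold sumsets, fatten with $n$ and eventually cover any prescribed compact interval; combined with the continuity of $J^{*n}$ for $n\ge1$ this yields $\Gamma(s,\cdot)>0$ on all of $\R$, with $\Gamma$ jointly continuous on $(0,\infty)\times\R$. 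Property (ii) then follows immediately: as $(s,x-y)$ ranges over the compact set $[t_1,t_2]\times[-(M_1+h_1),M_1+h_1]$, the continuous positive function $\Gamma$ attains a positive minimum. For property (i) I would use $\|J^{*n}\|_{L^\infty}\le\sup_{x\in\R}J(x)$ for every $n\ge1$ (from $\|J\ast g\|_{L^\infty}\le\|J\|_{L^\infty}\|g\|_{L^1}$ and $\|J^{*n}\|_{L^1}=1$), which gives $0\le\Gamma(s,x)\le(\sup_{x}J)\,(e^{-Ls}-e^{-(1+L)s})$; this is $O(s)$ as $s\to0^+$ and decays like $e^{-Ls}$ as $s\to\infty$ (note $L\ge\beta_0>0$ by (H3)), so $C\to0$ polynomially as $t-t_0\to0$ and exponentially as $t-t_0\to\infty$.

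The main obstacle is precisely the irreducibility and positivity of the iterated kernel $\Gamma$: unlike the reaction-diffusion setting, there is no Gaussian kernel and no Harnack inequality, so the spreading of positivity must be extracted by hand from the series, and one must take care that $J$ need not be positive near the origin — which is exactly why the symmetry-based observation $J^{*2}(0)>0$ is the decisive input. Everything else, namely the differential inequality, the comparison step, and the two decay estimates, is routine once the positive kernel is in place.
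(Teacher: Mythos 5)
Your proof is correct, and it reaches the conclusion by a genuinely different route than the paper. The paper works directly with $\tilde v=e^{(1+K)(t-t_0)}(u_1-u_2)\le 0$, integrates the one-step inequality $\tilde v_t\le J\ast\tilde v(t_0,\cdot)$ over a time interval of length $T$, and iterates $N$ times to isolate the \emph{single} term $T^{N}[J^{N}\ast\tilde v(t_0,\cdot)]$, with $N=N(|x-z|,h)$ chosen so that $J^{N}>0$ on $[x-z-h,x-z+h]$; the constant is then $\tilde C e^{-(1+K)(t-t_0)}\big(\tfrac{t-t_0}{N}\big)^{N}$. You instead compare $v=u_2-u_1\ge 0$ with the \emph{exact} solution of the linearized equation $\partial_t V=J\ast V-(1+L)V$ and keep the whole exponential series, so your kernel $\Gamma(s,\cdot)=e^{-(1+L)s}\sum_{n\ge1}\frac{s^n}{n!}J^{\ast n}$ is a sum over all convolution powers rather than one of them. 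Both arguments hinge on the same key fact --- eventual positivity of $J^{\ast n}$ on any prescribed compact interval --- and your treatment of this point is actually more careful than the paper's: you isolate the role of the symmetry of $J$ via $J^{\ast2}(0)=\int J^2>0$, which is exactly what rules out the scenario where $\supp J$ stays away from the origin and the sumsets drift off to infinity (the paper merely asserts the positivity). What your route buys is a kernel $\Gamma$ that is jointly continuous and everywhere positive for every $s>0$, so property (ii) falls out of compactness at once, and the two-sided bound $0\le\Gamma(s,\cdot)\le(\sup J)(e^{-Ls}-e^{-(1+L)s})$ gives (i) immediately; what it costs is the extra (routine) justification that the operator exponential solves the linear equation and that the appendix comparison principle applies to $v-V$, steps the paper avoids by never introducing an auxiliary solution.
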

\begin{proof}
Let $t>t_{0}\geq\tau$. Set $v_{1}(t,x):=u_{1}(t,x;\tau)$ and $v_{2}(t,x):=u_{2}(t,x;\tau)$. By assumption, $v(t,x):=v_{1}(t,x)-v_{2}(t,x)\leq0$ and satisfies
\begin{equation*}
v_{t}\leq J\ast v-v+f(t,v_{1})-f(t,v_{2}).
\end{equation*}
By $\rm(H2)$, we can find $K>0$ such that $f(t,v_{1})-f(t,v_{2})\leq -K(v_{1}-v_{2})$, which implies that
$$v_{t}\leq J\ast v-v-Kv.$$
Setting $\tilde{v}(t,x):=e^{(1+K)(t-t_{0})}v(t,x)\leq0$, we see
\begin{equation}\label{an-differential-inequality-1}
\tilde{v}_{t}\leq J\ast\tilde{v}\leq0.
\end{equation}
In particular, $\tilde{v}(t,x)\leq\tilde{v}(t_{0},x)$. It then follows that
$$\tilde{v}_{t}(t,x)\leq [J\ast\tilde{v}(t,\cdot)](x)\leq[J\ast\tilde{v}(t_{0},\cdot)](x).$$
Integrating over $[t_{0},t]$ with respect to the time variable, we find from $\tilde{v}(t_{0},x)\leq0$ that
\begin{equation*}
\tilde{v}(t,x)\leq (t-t_{0})[J\ast\tilde{v}(t_{0},\cdot)](x)+\tilde{v}(t_{0},x)\leq(t-t_{0})[J\ast\tilde{v}(t_{0},\cdot)](x).
\end{equation*}
In particular, for any $T>0$, we have
\begin{equation}\label{initial-estimate}
\tilde{v}(t_{0}+T,x)\leq T[J\ast\tilde{v}(t_{0},\cdot)](x).
\end{equation}

Then, considering \eqref{an-differential-inequality-1} with initial time at $t_{0}+T$ and repeating the above arguments, we find
\begin{equation*}
\tilde{v}(t_{0}+T+T,x)\leq T[J\ast\tilde{v}(t_{0}+T,\cdot)](x)\leq T^{2}[J\ast J\ast\tilde{v}(t_{0},\cdot)](x),
\end{equation*}
where we used \eqref{initial-estimate} in the second inequality. Repeating this, we conclude that for any $T>0$ and any $N=1,2,3,\dots$, there holds
\begin{equation}\label{repeating-result}
\tilde{v}(t_{0}+NT,x)\leq T^{N}[J^{N}\ast\tilde{v}(t_{0},\cdot)](x),
\end{equation}
where $J^{N}=\underbrace{J\ast J\ast\cdots\ast J}_{N\,\,\text{times}}$. Note that $J^{N}$ is nonnegative, and if $J$ is compactly supported, then $J^{N}$ is not everywhere positive no matter how large $N$ is. But, since $J$ is nonnegative and positive on some open interval, $J^{N}$ can be positive on any fixed bounded interval if $N$ is large. Moreover, since $J$ is symmetric, so is $J^{N}$.

Now, let $x\in\R$, $z\in\R$ and $h>0$, and let $N:=N(|x-z|,h)$ be large enough so that
\begin{equation*}
\tilde{C}=\tilde{C}(|x-z|,h):=\inf_{y\in[x-z-h,x-z+h]}J^{N}(y)>0.
\end{equation*}
Note that the dependence of $N$ on $x-z$ through $|x-z|$ is due to the symmetry of $J^{N}$. Moreover, the positivity of $\tilde{C}:[0,\infty)\times(0,\infty)\to(0,\infty)$ is uniform on compacts sets, which is because  $N$ can be chosen to be nondecreasing in $|x-z|$ and in $h$.

Then, for $t>t_{0}$, we see from \eqref{repeating-result} with $T=\frac{t-t_{0}}{N}$ that
\begin{equation*}
\begin{split}
\tilde{v}(t,x)&\leq\bigg(\frac{t-t_{0}}{N}\bigg)^{N}\int_{\R}J^{N}(x-y)\tilde{v}(t_{0},y)dy\\
&\leq\bigg(\frac{t-t_{0}}{N}\bigg)^{N}\int_{z-h}^{z+h}J^{N}(x-y)\tilde{v}(t_{0},y)dy\\
&\leq\tilde{C}\bigg(\frac{t-t_{0}}{N}\bigg)^{N}\int_{z-h}^{z+h}\tilde{v}(t_{0},y)dy,
\end{split}
\end{equation*}
since $x-y\in[x-z-h,x-z+h]$ when $y\in[z-h,z+h]$. Going back to $v(t,x)$, we find
\begin{equation*}
u_{1}(t,x;\tau)-u_{2}(t,x;\tau)\leq\tilde{C}e^{-(1+K)(t-t_{0})}\bigg(\frac{t-t_{0}}{N}\bigg)^{N}\int_{z-h}^{z+h}[u_{1}(t_{0},y;\tau)-u_{2}(t_{0},y;\tau)]dy
\end{equation*}
The result then follows with $C=\tilde{C}e^{-(1+K)(t-t_{0})}\big(\frac{t-t_{0}}{N}\big)^{N}$.
\end{proof}

As a simple consequence of Lemma \ref{lem-tech-1234567}, we have

\begin{cor}\label{cor-tech-1234567}
For any $t>t_{0}\geq\tau$, $h>0$ and $z\in\R$, there holds
\begin{equation*}
u_{x}(t,x)\leq C\int_{z-h}^{z+h}u_{x}(t_{0},y)dy,\quad x\in\R,
\end{equation*}
where $C>0$ is as in Lemma \ref{lem-tech-1234567}.
\end{cor}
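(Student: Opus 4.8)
The plan is to apply Lemma \ref{lem-tech-1234567} to two spatially shifted copies of the transition front $u$ and then differentiate. The crucial structural fact is that \eqref{main-eqn} is invariant under spatial translation: since the kernel $J$ depends only on $x-y$ and $f(t,u)$ carries no explicit $x$-dependence, for every fixed $\eta\in\R$ the shift $u(t,x+\eta)$ is again a global solution of \eqref{main-eqn}, and hence is simultaneously a sub- and a super-solution.

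Fix $\eta>0$ and set $u_{1}(t,x)=u(t,x+\eta)$ and $u_{2}(t,x)=u(t,x)$. Since $u$ is non-increasing in $x$ throughout this section we have $u_{1}\leq u_{2}$, and as $u$ takes values in $(0,1)$ certainly $-1\leq u_{1}\leq u_{2}\leq 2$; thus the hypotheses of Lemma \ref{lem-tech-1234567} hold (with the given $\tau\leq t_{0}$). The lemma gives, for the prescribed $t>t_{0}\geq\tau$, $h>0$ and $z\in\R$,
\begin{equation*}
u(t,x+\eta)-u(t,x)\leq C\int_{z-h}^{z+h}\big[u(t_{0},y+\eta)-u(t_{0},y)\big]\,dy,\quad x\in\R,
\end{equation*}
where $C=C(t-t_{0},|x-z|,h)$ is the constant from the lemma and, crucially, is independent of $\eta$.

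Finally I would divide by $\eta>0$ and let $\eta\to 0^{+}$. By Proposition \ref{prop-regularity-of-tf}(ii), $u$ is continuously differentiable in $x$ with $\sup_{(t,x)}|u_{x}(t,x)|<\infty$, so the left-hand difference quotient converges to $u_{x}(t,x)$, while on the right the integrand $\eta^{-1}[u(t_{0},y+\eta)-u(t_{0},y)]$ converges pointwise to $u_{x}(t_{0},y)$ and, by the mean value theorem, stays bounded by $\sup|u_{x}|$ on the finite interval $[z-h,z+h]$. The bounded convergence theorem then passes the limit inside the integral, yielding $u_{x}(t,x)\leq C\int_{z-h}^{z+h}u_{x}(t_{0},y)\,dy$ with the same $C$. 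The argument is soft; there is no genuinely hard step, the only point needing care being this interchange of the limit $\eta\to0^{+}$ with the integral, which is legitimate precisely because $\eta$ does not enter $C$ and because $u_{x}$ is uniformly bounded by Proposition \ref{prop-regularity-of-tf}.
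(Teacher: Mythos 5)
Your proposal is correct and is essentially identical to the paper's own proof, which applies Lemma \ref{lem-tech-1234567} with $u_{1}=u(t,x+\ep)$ and $u_{2}=u(t,x)$, divides by $\ep$, and lets $\ep\to0^{+}$. You have simply spelled out the details the paper leaves implicit (translation invariance, monotonicity giving $u_{1}\leq u_{2}$, $\eta$-independence of $C$, and the bounded convergence step via Proposition \ref{prop-regularity-of-tf}(ii)), all of which are accurate.
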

\begin{proof}
Applying Lemma \ref{lem-tech-1234567} with $u_{1}=u(t,x+\ep)$ and $u_{2}=u(t,x)$, dividing the result by $\ep$ and passing to the limit $\ep\to0^{+}$, we conclude the lemma.
\end{proof}

Now, we prove Theorem \ref{thm-uniform-steepness}.

\begin{proof}[Proof of Theorem \ref{thm-uniform-steepness}]
Recall $X_{\la}(t):=X_{\la}^{\pm}(t)$. By Lemma \ref{lem-bounded-interface-width}, $\sup_{t\in\R}|X(t)-X_{\la}(t)|<\infty$.

Fix any $\la_{0}\in(0,1)$ and set

\begin{equation*}
h_{\la_{0}}:=\max\bigg\{\sup_{t\in\R}|X(t)-X_{\frac{\la_{0}}{2}}(t)|,\sup_{t\in\R}|X(t)-X_{\frac{1+\la_{0}}{2}}(t)|\bigg\}.
\end{equation*}
Then, $h_{\la_{0}}<\infty$ and
\begin{equation}\label{simple-comparison-12345}
X(t)+h_{\la_{0}}\geq X_{\frac{\la_{0}}{2}}(t),\quad X(t)-h_{\la_{0}}\leq X_{\frac{1+\la_{0}}{2}}(t)
\end{equation}
for all $t\in\R$. Now, fix $\tau>0$. For $t\in\R$, we apply Lemma \ref{lem-tech-1234567} with $z=X(t)$ and $h=h_{\la_{0}}$ to see that if $|x-X(t)|\leq M$, then
\begin{equation}\label{aprior-estimate-1234567}
\begin{split}
u_{x}(\tau+t,x)&\leq \tilde{C}(\tau,M,h_{\la_{0}})\int_{X(t)-h_{\la_{0}}}^{X(t)+h_{\la_{0}}}u_{x}(t,y)dy\\
&=\tilde{C}(\tau,M,h_{\la_{0}})[u(t,X(t)+h_{\la_{0}})-u(t,X(t)-h_{\la_{0}})]\\
&\leq\tilde{C}(\tau,M,h_{\la_{0}})[u(t,X_{\frac{\la_{0}}{2}}(t))-u(t,X_{\frac{1+\la_{0}}{2}}(t))]\\
&=-\frac{\tilde{C}(\tau,M,h_{\la_{0}})}{2},
\end{split}
\end{equation}
where we used \eqref{simple-comparison-12345} and the monotonicity in the second inequality, and $\tilde{C}(\tau,M,h_{\la_{0}})=\inf_{K\in[0,M]}C(\tau,K,h_{\la_{0}})$. To apply \eqref{aprior-estimate-1234567}, we see that if $|x-X(t+1)|\leq M$, then
$$|x-X(t)|\leq|x-X(t+1)|+|X(t+1)-X(t)|\leq M+c_{\max},$$
where we used \eqref{interface-eq}. We then apply \eqref{aprior-estimate-1234567} with $M$ replaced by $M+c_{\max}$ and $\tau$ replaced by $1$ to conclude that 
$$
u_{x}(t+1,x)\leq-\frac{1}{2}\inf_{K\in[0,M+c_{\max}]}C(1,K,h_{\la_{0}}). 
$$
Since $t\in\R$ is arbitrary, we arrive at the result.
\end{proof}


\section{Uniform exponential stability of space non-increasing transition fronts}\label{sec-stability}

In this section, we study the stability of space non-increasing transition fronts of \eqref{main-eqn}.
Throughout this section, we assume (H1)-(H3) and assume that
$u(t,x)$ is a transition front of \eqref{main-eqn} with interface location function $X(t)$ and $u_{x}(t,x)\leq 0$.

The main results in this section are stated in the following theorem.

\begin{thm}\label{thm-stability}
\begin{itemize}
\item[\rm(i)]
Let $u_{0}:\R\to[0,1]$ be uniformly continuous and satisfies
\begin{equation*}
\liminf_{x\to-\infty}u_{0}(x)>\theta_{1}\quad\text{and}\quad\limsup_{x\to\infty}u_{0}(x)<\theta_{0},
\end{equation*}
where $\theta_{0}$ and $\theta_{1}$ are as in (H3). Then, for any $t_0\in\R$,  there exist $\xi=\xi(t_0,u_{0})\in\R$, $C=C(u_{0})>0$
 (independent of $t_0$) and $\om_{*}>0$ (independent of $t_0$ and $u_{0}$) such that
\begin{equation*}
\sup_{x\in\R}|u(t,x;t_{0},u_{0})-u(t,x-\xi)|\leq Ce^{-\om_{*}(t-t_{0})}
\end{equation*}
for all $t\geq t_{0}$.

\item[\rm(ii)] Let $\{u_{t_{0}}\}_{t_{0}\in\R}$ be a family of uniformly continuous initial data satisfying
\begin{equation*}
u(t_{0},x-\xi_{0}^{-})-\mu_{0}\leq u_{t_{0}}(x)\leq u(t_{0},x-\xi_{0}^{+})+\mu_{0},\quad x\in\R,\,\,t_{0}\in\R
\end{equation*}
for $\xi_{0}^{\pm}\in\R$ and $\mu_{0}\in(0,\min\{\theta_{0},1-\theta_{1}\})$ being independent of $t_{0}\in\R$. Then, there exist $t_{0}$-independent constants $C>0$ and $\om_{*}>0$, and a family of shifts $\{\xi_{t_{0}}\}_{t_{0}\in\R}\subset\R$ satisfying $\sup_{t_{0}\in\R}|\xi_{t_{0}}|<\infty$ such that
\begin{equation*}
\sup_{x\in\R}|u(t,x;t_{0},u_{t_{0}})-u(t,x-\xi_{t_{0}})|\leq Ce^{-\om_{*}(t-t_{0})}
\end{equation*}
for all $t\geq t_{0}$ and $t_{0}\in\R$.
\end{itemize}
\end{thm}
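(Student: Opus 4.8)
The engine of the proof is a pair of sub- and super-solutions built from the front $u$ itself. Fix a base time $t_{0}$ and constants $\omega,\sigma,\delta>0$ to be chosen, put $q(t)=\delta e^{-\omega(t-t_{0})}$ and
\[
\xi^{\pm}(t)=\xi_{0}^{\pm}\pm\frac{\sigma\delta}{\omega}\big(1-e^{-\omega(t-t_{0})}\big),
\]
so that $\dot q=-\omega q$ and $\dot\xi^{\pm}=\pm\sigma q$, and set $\bar u(t,x)=u(t,x-\xi^{+}(t))+q(t)$, $\underline u(t,x)=u(t,x-\xi^{-}(t))-q(t)$. Since $q$ is independent of $x$ and $J\ast q=q$, using that $u$ solves \eqref{main-eqn} one computes
\[
\bar u_{t}-J\ast\bar u+\bar u-f(t,\bar u)=-\dot\xi^{+}u_{x}-\omega q-f_{u}(t,\eta)\,q,
\]
where $u_{x}$ is evaluated at $(t,x-\xi^{+}(t))$ and $\eta$ lies between $u(t,x-\xi^{+}(t))$ and that value plus $q(t)$; an analogous identity holds for $\underline u$. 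The plan is to force the right-hand side to be $\geq0$ (resp. $\leq0$), making $\bar u$ a super- and $\underline u$ a sub-solution.

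I would check the sign through a three–region decomposition in the distance to the interface. On the band $|x-\xi^{+}(t)-X(t)|\leq M$, Theorem \ref{thm-uniform-steepness} gives $-u_{x}\geq\kappa_{M}>0$, so $-\dot\xi^{+}u_{x}=\sigma q(-u_{x})\geq\kappa_{M}\sigma q$ dominates $(\omega+L)q$, where $L=\sup|f_{u}|$ is finite by (H2), once $\sigma$ is large. On the two tails, the uniform-in-$t$ limits in Definition \ref{defn-tf} let me enlarge $M$ so that $u$ lies below $\theta_{0}$ on the right and above $\theta_{1}$ on the left; then $\eta$ falls in a range where (H3) gives $f_{u}(t,\eta)\leq-\min\{\beta_{0},\beta_{1}\}$, while $-\dot\xi^{+}u_{x}\geq0$ by monotonicity, so the right-hand side is at least $(\min\{\beta_{0},\beta_{1}\}-\omega)q\geq0$ as soon as $\omega<\min\{\beta_{0},\beta_{1}\}$. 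Crucially, $\kappa_{M}$, $L$, $\beta_{0}$, $\beta_{1}$ and the threshold $M$ are all independent of $t$, so $\omega,\sigma,\delta$ can be fixed independently of $t_{0}$; this is the source of the $t_{0}$-uniformity required in (ii).

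With these barriers, the comparison principles of Appendix \ref{sec-app-cp} give trapping. In (ii) the hypothesis supplies exactly the ordering $\underline u(t_{0},\cdot)\leq u_{t_{0}}\leq\bar u(t_{0},\cdot)$ with $\delta=\mu_{0}$, so for $t\geq t_{0}$ the solution is squeezed between $u(t,\cdot-\xi^{-}(t))-q(t)$ and $u(t,\cdot-\xi^{+}(t))+q(t)$; since $\xi^{\pm}(t)\to\xi^{\pm}_{\infty}$ at rate $e^{-\omega(t-t_{0})}$ and $u$ is uniformly Lipschitz in $x$ by Proposition \ref{prop-regularity-of-tf}(ii), this already gives exponential convergence into a band $[\,u(t,\cdot-\xi^{+}_{\infty}),\,u(t,\cdot-\xi^{-}_{\infty})\,]$ of finite width, with all constants uniform in $t_{0}$. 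For (i) the datum is not yet of band type, so I would first run an attraction step: the assumed limits $\liminf_{x\to-\infty}u_{0}>\theta_{1}$ and $\limsup_{x\to+\infty}u_{0}<\theta_{0}$, together with the strong stability of $0$ and $1$ from (H3), let front-shaped barriers drive the solution into such a band in finite time, reducing (i) to the band situation.

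The remaining and hardest point is to collapse the band to a \emph{single} shift. Here I would iterate a squeezing lemma: there should exist a fixed lapse $T$, a factor $\rho\in(0,1)$ and a constant $C$ such that band data of width $d$ and vertical error $\delta$ evolve, after time $T$, into band data of width at most $\rho d+C\delta$ and error at most $\rho\delta$. The coupled recursion $d_{n+1}\leq\rho d_{n}+C\delta_{n}$, $\delta_{n+1}\leq\rho\delta_{n}$ then drives both $d_{n}$ and $\delta_{n}$ to $0$ geometrically, the nested shift-windows pin a unique limiting shift $\xi_{t_{0}}$ (with $\sup_{t_{0}}|\xi_{t_{0}}|<\infty$ inherited from the uniform bounds), and one reads off exponential convergence at a rate $\omega_{*}$. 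I expect this contraction to be the main obstacle: the nonlocal equation has no Harnack inequality and only the limited regularity of Proposition \ref{prop-regularity-of-tf}, so classical parabolic arguments are unavailable, and the definite gain in width must instead be extracted from the uniform steepness of Theorem \ref{thm-uniform-steepness} and the uniform strong stability in (H3). Carrying every constant through this iteration independently of $t_{0}$ is the second essential difficulty, and it is precisely what makes the uniform statement (ii) the crux.
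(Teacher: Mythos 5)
Your trapping step is essentially the paper's Lemma \ref{lem-stability} and Corollary \ref{cor-stability}: the same ansatz $u(t,x-\xi^{\pm}(t))\pm\delta e^{-\om(t-t_0)}$ with $\dot\xi^{\pm}=\pm\sigma q$, the same three-region verification (uniform steepness on the band, (H3) on the tails), and the same observation that all constants are $t$- and $t_0$-independent. The reduction of (i) to band-type data is also as in the paper. Up to that point the proposal is correct.

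The genuine gap is the squeezing lemma, which you state as a conjecture ("there should exist a fixed lapse $T$, a factor $\rho$...") and do not prove; it is the heart of the theorem, and the form you conjecture is not the one that can actually be established. Two specific problems. First, your error recursion $\delta_{n+1}\leq\rho\,\delta_{n}$ decouples from the width, but shrinking the width is achieved by moving one of the two bounding translates by an amount of order $\ep^{*}\min\{1,h\}$, and replacing a translate by its shift costs up to $C^{*}\ep^{*}\min\{1,h\}$ in sup norm ($C^{*}=\sup|u_x|$); the achievable recursion is therefore coupled, $\de_{n+1}\leq\rho\de_{n}+C'\,\min\{1,h_{n}\}$ and $h_{n+1}\leq h_{n}-\ep^{*}\min\{1,h_{n}\}+C''\de_{n}$, and one must check (as the paper does in Lemma \ref{lem-stability-iteration} and the proof of Theorem \ref{thm-stability}, by taking the lapse $\tilde T$ large so the cross terms are dominated) that the coupled system still contracts; a generic coupled system of this shape need not. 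Second, your width recursion $d_{n+1}\leq\rho d_{n}+C\de_{n}$ is a multiplicative contraction for all widths, but the mechanism available (uniform steepness over a \emph{bounded} window around the interface, converted into a pointwise gain one time unit later via the kernel-iteration estimate of Lemma \ref{lem-tech-1234567}) yields only an additive gain capped at $\ep^{*}$; for large $d$ no factor $\rho<1$ is obtainable, which is why the paper runs a two-phase iteration (first reduce $h_0$ to $\leq 1$ by fixed decrements, then contract geometrically) and why $C$ in part (i) depends on $u_0$. Finally, the actual proof of the per-step gain requires a dichotomy — the solution is integrally separated from at least one of the two bounding translates over a fixed interval at the interface (inequalities \eqref{alternative-1}/\eqref{alternative-2}) — followed by Lemma \ref{lem-tech-1234567} to turn that integral separation into a pointwise one; this Harnack substitute is the missing idea in your outline, and without it the "definite gain in width" you invoke has no source.
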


To prove Theorem \ref{thm-stability}, we first show two lemmas.

\begin{lem}\label{lem-stability}
Let $u_{0}$ be as in Theorem \ref{thm-stability}. Then, for any $t_0\in\R$,
there exist  $\xi_{0}^{\pm}=\xi_{0}^{\pm}(t_0,u_{0})\in\R$, $\mu=\mu(u_{0})>0$ (independent of $t_0$) and $\om=\min\{\beta_{0},\beta_{1}\}>0$ (independent of
$t_0$ and $u_{0}$) such that
\begin{equation}\label{stability-trapping-lem}
u(t,x-\xi^{-}(t))-\mu e^{-\om(t-t_{0})}\leq u(t,x;t_{0},u_{0})\leq u(t,x-\xi^{+}(t))+\mu e^{-\om(t-t_{0})},\quad x\in\R
\end{equation}
for $t\geq t_{0}$, where $\beta_{0}$ and $\beta_{1}$ are as in (H3), and
\begin{equation*}
\xi^{\pm}(t)=\xi_{0}^{\pm}\pm\frac{A\mu}{\om}(1-e^{-\om(t-t_{0})}),\quad t\geq t_{0}
\end{equation*}
for some universal constant $A>0$.
In particular, there holds
\begin{equation*}
u(t,x-\xi^{-})-\mu e^{-\om(t-t_{0})}\leq u(t,x;t_{0},u_{0})\leq u(t,x-\xi^{+})+\mu e^{-\om(t-t_{0})},\quad x\in\R
\end{equation*}
for $t\geq t_{0}$, where $\xi^{\pm}=\xi_{0}^{\pm}\pm\frac{A\mu}{\om}$.
\end{lem}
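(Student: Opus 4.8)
The plan is to trap $u(\cdot,\cdot;t_{0},u_{0})$ between a super-solution $v^{+}$ and a sub-solution $v^{-}$ built from the front itself, each obtained by shifting $u$ horizontally by a converging amount and vertically by an exponentially decaying amount, and then to invoke the comparison principle of Appendix~\ref{sec-app-cp}. Concretely, set $q(t):=\mu e^{-\om(t-t_{0})}$ with $\om=\min\{\be_{0},\be_{1}\}$ as in (H3), take $\xi^{\pm}(t)$ as in the statement so that $\dot{\xi}^{\pm}(t)=\pm A\,q(t)$, and define
\begin{equation*}
v^{\pm}(t,x):=u(t,x-\xi^{\pm}(t))\pm q(t).
\end{equation*}
I would then show that $v^{+}$ is a super-solution and $v^{-}$ a sub-solution of \eqref{main-eqn} for $t\ge t_{0}$, choose $\mu$ and $\xi_{0}^{\pm}$ so that $v^{-}(t_{0},\cdot)\le u_{0}\le v^{+}(t_{0},\cdot)$, and let the comparison principle propagate the inequalities forward.

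The heart of the matter is the super-solution computation. Since $\int_{\R}J=1$ gives $J\ast 1=1$ and $u(t,\cdot-\xi^{+}(t))$ solves \eqref{main-eqn} in its first two arguments, substitution into the defect $\NN[v^{+}]:=v^{+}_{t}-J\ast v^{+}+v^{+}-f(t,v^{+})$ causes the convolution and the $\pm q$ terms to recombine, and using $q'=-\om q$ together with the mean value theorem $f(t,u)-f(t,u+q)=-f_{u}(t,\tilde u)\,q$ (with $\tilde u$ between $u$ and $u+q$) one finds, writing $u,u_{x}$ for their values at $(t,x-\xi^{+}(t))$,
\begin{equation*}
\NN[v^{+}]=q\big[A\,|u_{x}|-\om-f_{u}(t,\tilde u)\big],
\end{equation*}
where $\dot{\xi}^{+}=Aq$ and $u_{x}\le0$ were used. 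I would split into two regions. Where $\tilde u\in[-1,\theta_{0}]\cup[\theta_{1},2]$, assumption (H3) gives $-f_{u}(t,\tilde u)\ge\om$, so the bracket is $\ge A|u_{x}|\ge0$. In the remaining region $\tilde u\in(\theta_{0},\theta_{1})$ we have $u(t,x-\xi^{+}(t))\in(\theta_{0}-\mu,\theta_{1}+\mu)\subset(0,1)$ (here $\mu<\min\{\theta_{0},1-\theta_{1}\}$ is essential); by Lemma~\ref{lem-bounded-interface-width} the point $x-\xi^{+}(t)$ then lies within a bounded, $t$-independent distance of $X(t)$, so Theorem~\ref{thm-uniform-steepness} yields a $t$-independent $\de>0$ with $|u_{x}|\ge\de$ there, and taking $A$ with $A\de\ge\om+\sup|f_{u}|$ makes the bracket nonnegative. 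The argument for $v^{-}$ is entirely symmetric.

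It remains to trap the initial datum. I would first fix $\mu\in(0,\min\{\theta_{0},1-\theta_{1}\})$ with $\mu>\limsup_{x\to\infty}u_{0}(x)$ and $\mu>1-\liminf_{x\to-\infty}u_{0}(x)$; this is possible precisely because of the hypotheses on $u_{0}$, and it depends only on $u_{0}$, hence is independent of $t_{0}$ (as are $\om$ and $A$). With this $\mu$, the tail inequalities are automatic: for $x$ far to the left $u_{0}(x)>1-\mu>u(t_{0},\cdot)-\mu$, and for $x$ far to the right $u_{0}(x)<\mu>u(t_{0},\cdot)-\mu$ in the relevant direction, using only $0<u(t_{0},\cdot)<1$. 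On the remaining bounded range one secures $v^{-}(t_{0},\cdot)\le u_{0}\le v^{+}(t_{0},\cdot)$ by pushing $\xi_{0}^{+}$ sufficiently large (so that $u(t_{0},x-\xi_{0}^{+})\ge1-\mu\ge u_{0}(x)-\mu$ there) and $\xi_{0}^{-}$ sufficiently negative (so that $u(t_{0},x-\xi_{0}^{-})\le\mu\le u_{0}(x)+\mu$ there), the required thresholds being finite and controlled uniformly in $t_{0}$ by the bounded interface width. The comparison principle then gives $v^{-}\le u(\cdot,\cdot;t_{0},u_{0})\le v^{+}$ for $t\ge t_{0}$, which is exactly \eqref{stability-trapping-lem}; the final ``in particular'' follows by replacing $\xi^{\pm}(t)$ with $\xi^{\pm}=\xi_{0}^{\pm}\pm\frac{A\mu}{\om}$ and using the monotonicity of $u$ in $x$, since $\xi^{+}(t)\le\xi^{+}$ and $\xi^{-}(t)\ge\xi^{-}$.

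I expect the main obstacle to be the super-solution verification in the interface region, where $f_{u}$ may be positive and destabilizing; this is exactly where uniform steepness (Theorem~\ref{thm-uniform-steepness}) is indispensable, since it supplies the $t$-uniform strict negativity of $u_{x}$ that lets the transport term $A\,q|u_{x}|$ dominate $\om+\sup|f_{u}|$. A secondary, but essential, point requiring care is that $\mu$ and $\om$ be chosen independently of $t_{0}$, which holds because they depend only on $u_{0}$ and on the structural constants $\be_{0},\be_{1},\theta_{0},\theta_{1}$, while the $t_{0}$-dependence is confined to the admissible shifts $\xi_{0}^{\pm}$.
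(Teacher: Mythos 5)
Your proposal is correct and follows essentially the same route as the paper: you build the sub- and super-solutions by shifting the front horizontally at the exponentially decaying rate $\pm A\mu e^{-\om(t-t_{0})}$ and vertically by $\mp\mu e^{-\om(t-t_{0})}$, verify the differential inequality by using (H3) where the solution is near $0$ or $1$ and Theorem \ref{thm-uniform-steepness} together with Lemma \ref{lem-bounded-interface-width} in the interface region, trap $u_{0}$ at $t=t_{0}$ by choosing $\mu$ from the limit hypotheses and $\xi_{0}^{\pm}$ by translation, and conclude with the comparison principle. The only differences from the paper's proof are cosmetic (a single $\mu$ in place of $\mu_{0}^{\pm}$, and a case split on the value of the mean-value point rather than on the spatial location relative to $X(t)$), and both yield the same constants $\om=\min\{\beta_{0},\beta_{1}\}$ and $A$.
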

\begin{proof}
Let $u_{0}$ be as in the statement of Theorem \ref{thm-stability}(i). Let $\mu_{0}^{\pm}=\mu_{0}^{\pm}(u_{0})$ be such that
\begin{equation*}
\theta_{1}<1-\mu_{0}^{-}<\liminf_{x\to-\infty}u_{0}(x)\quad\text{and}\quad\limsup_{x\to\infty}u_{0}(x)<\mu_{0}^{+}<\theta_{0}.
\end{equation*}
Then, for any $t_0\in\R$, we can find  $\xi_{0}^{\pm}=\xi_{0}^{\pm}(t_0,u_{0})$ such that
\begin{equation}\label{comparison-initial-data}
u(t_{0},x-\xi_{0}^{-})-\mu_{0}^{-}\leq u_{0}(x)\leq u(t_{0},x-\xi_{0}^{+})+\mu_{0}^{+},\quad x\in\R.
\end{equation}

To show the lemma, we then construct appropriate sub- and super-solutions and apply comparison principle. We here only prove the first inequality in \eqref{stability-trapping-lem}; the second one can be proven along the same line. To do so, we fix $\om>0$, $A>0$ (to be chosen) and set
\begin{equation*}
u^{-}(t,x)=u(t,x-\xi(t))-\mu_{0}^{-}e^{-\om(t-t_{0})},
\end{equation*}
where $\xi(t)=\xi_{0}^{-}-\frac{A\mu_{0}^{-}}{\om}(1-e^{-\om(t-t_{0})})$. We then compute
\begin{equation*}
\begin{split}
&u^{-}_{t}-[J\ast u^{-}-u^{-}]-f(t,u^{-})\\
&\quad\quad=f(t,u(t,x-\xi(t)))-f(t,u^{-}(t,x))+A\mu_{0}^{-}e^{-\om(t-t_{0})}u_{x}(t,x-\xi(t))+\om\mu_{0}^{-}e^{-\om(t-t_{0})}.
\end{split}
\end{equation*}

Now, we let $M>0$ be so large that
\begin{equation*}
\forall t\in\R,\quad
\begin{cases}
u(t,x)\leq\theta_{0}\quad\text{if}\quad x-X(t)\geq M,\\
u(t,x)\geq\theta_{1}+\mu_{0}^{-}\quad\text{if}\quad x-X(t)\leq-M.
\end{cases}
\end{equation*}
Notice such an $M$ exists due to Lemma \ref{lem-bounded-interface-width}. Then, we see
\begin{itemize}
\item if $x-\xi(t)-X(t)\geq M$, then $u^{-}(t,x)\leq u(t,x-\xi(t))\leq\theta_{0}$, and then by $\rm(H3)$,
\begin{equation*}
\begin{split}
f(t,u(t,x-\xi(t)))-f(t,u^{-}(t,x))&\leq-\beta_{0}[u(t,x-\xi(t))-u^{-}(t,x)]\\
&=-\beta_{0}\mu_{0}^{-}e^{-\om(t-t_{0})}.
\end{split}
\end{equation*}
Since $A\mu_{0}^{-}e^{-\om(t-t_{0})}u_{x}(t,x-\xi(t))\leq0$, we find
\begin{equation*}
u^{-}_{t}-[J\ast u^{-}-u^{-}]-f(t,u^{-})\leq-\beta_{0}\mu_{0}^{-}e^{-\om(t-t_{0})}+\om\mu_{0}^{-}e^{-\om(t-t_{0})}\leq0
\end{equation*}
if $\om\leq\beta_{0}$;

\item if $x-\xi(t)-X(t)\leq-M$, then
\begin{equation*}
u(t,x-\xi(t))\geq u^{-}(t,x)=u(t,x-\xi(t))-\mu_{0}^{-}e^{-\om(t-t_{0})}\geq\theta_{1}+\mu_{0}^{-}-\mu_{0}^{-}=\theta_{1},
\end{equation*}
and then by $\rm(H3)$,
$$f(t,u(t,x-\xi(t)))-f(t,u^{-}(t,x))\leq-\beta_{1}\mu_{0}^{-}e^{-\om(t-t_{0})}.$$ Hence, $u^{-}_{t}-[J\ast u^{-}-u^{-}]-f(t,u^{-})\leq0$ if $\om\leq\beta_{1}$;

\item if $|x-\xi(t)-X(t)|\leq M$, then by Theorem \ref{thm-uniform-steepness},
\begin{equation*}
C_{M}:=\sup_{t\in\R}\sup_{|x-\xi(t)-X(t)|\leq M}u_{x}(t,x-\xi(t))=\sup_{t\in\R}\sup_{|x-X(t)|\leq M}u_{x}(t,x)<0.
\end{equation*}
Since
$$|f(t,u(t,x-\xi(t)))-f(t,u^{-}(t,x))|\leq C_{*}\mu_{0}^{-}e^{-\om(t-t_{0})}$$
for some $C_{*}>0$, we find
\begin{equation*}
u^{-}_{t}-[J\ast u^{-}-u^{-}]-f(t,u^{-})\leq \big(C_{*}\mu_{0}^{-}+A\mu_{0}^{-}C_{M}+\om\mu_{0}^{-}\big)e^{-\om(t-t_{0})}\leq0
\end{equation*}
if $A\geq\frac{C_{*}+\om}{-C_{M}}$.
\end{itemize}
Hence, if we choose $\om=\min\{\beta_{0},\beta_{1}\}$ and $A=\frac{2C_{*}}{-C_{M}}$ (note $\om=\min\{\beta_{0},\beta_{1}\}\leq C_{*}$), we find
\begin{equation*}
u^{-}_{t}\leq J\ast u^{-}-u^{-}+f(t,u^{-}),\quad x\in\R,\quad t>t_{0},
\end{equation*}
that is, $u^{-}(t,x)$ is a sub-solution on $(t_{0},\infty)$. Since
$$
u^{-}(t_{0},x)=u(t_{0},x-\xi_{0}^{-})-\mu_{0}^{-}\leq u_{0}(x)
$$
due to \eqref{comparison-initial-data}, we conclude from comparison principle that
\begin{equation*}
u(t,x-\xi(t))-\mu_{0}^{-}e^{-\om(t-t_{0})}=u^{-}(t,x)\leq u(t,x;t_{0},u_{0}),\quad x\in\R,\quad t\geq t_{0}.
\end{equation*}
Setting $\mu=\max\{\mu_{0}^{-},\mu_{0}^{+}\}$, we complete the proof.
\end{proof}

The proof of Lemma \ref{lem-stability} gives the following

\begin{cor}\label{cor-stability}
Suppose that $\tilde{u}_{0}:\R\to[0,1]$ is uniformly continuous and satisfies
\begin{equation*}
u(t_{0},x-\tilde{\xi}_{0}^{-})-\tilde{\mu}_{0}^{-}\leq \tilde{u}_{0}(x)\leq u(t_{0},x-\tilde{\xi}_{0}^{+})+\tilde{\mu}_{0}^{+},\quad x\in\R
\end{equation*}
for $t_{0}\in\R$, $\tilde{\xi}_{0}^{\pm}\in\R$ and $\tilde{\mu}_{0}^{\pm}>0$ satisfying $\theta_{1}<1-\tilde{\mu}_{0}^{-}$ and $\tilde{\mu}_{0}^{+}<\theta_{0}$, where $\theta_{0}$ and $\theta_{1}$ are as in (H3). Then, there exist $\tilde{\mu}=\max\{\tilde{\mu}_{0}^{-},\tilde{\mu}_{0}^{+}\}>0$ and $\om=\min\{\beta_{0},\beta_{1}\}>0$ such that
\begin{equation*}\label{stability-trapping}
u(t,x-\tilde{\xi}^{-}(t))-\tilde{\mu} e^{-\om(t-t_{0})}\leq u(t,x;t_{0},\tilde{u}_{0})\leq u(t,x-\tilde{\xi}^{+}(t))+\tilde{\mu} e^{-\om(t-t_{0})},\quad x\in\R
\end{equation*}
for $t\geq t_{0}$, where
$$\tilde{\xi}^{\pm}(t)=\tilde{\xi}_{0}^{\pm}\pm\frac{A\tilde{\mu}}{\om}(1-e^{-\om(t-t_{0})}),\quad t\geq t_{0}$$
for some universal constant $A>0$. In particular, we have
\begin{equation*}
u(t,x-\tilde{\xi}^{-})-\tilde{\mu} e^{-\om(t-t_{0})}\leq u(t,x;t_{0},\tilde{u}_{0})\leq u(t,x-\tilde{\xi}^{+})+\tilde{\mu} e^{-\om(t-t_{0})},\quad x\in\R
\end{equation*}
for $t\geq t_{0}$, where $\tilde{\xi}^{\pm}=\tilde{\xi}_{0}^{\pm}\pm\frac{A\tilde{\mu}}{\om}$.
\end{cor}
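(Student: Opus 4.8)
The plan is to observe that the corollary is nothing more than Lemma~\ref{lem-stability} restated under its genuinely operative hypothesis. Inspecting the proof of Lemma~\ref{lem-stability}, the asymptotic conditions $\liminf_{x\to-\infty}u_{0}(x)>\theta_{1}$ and $\limsup_{x\to\infty}u_{0}(x)<\theta_{0}$ are used at exactly one place: to produce numbers $\mu_{0}^{\pm}$ with $\theta_{1}<1-\mu_{0}^{-}$ and $\mu_{0}^{+}<\theta_{0}$, together with shifts $\xi_{0}^{\pm}$, so that the trapping inequality \eqref{comparison-initial-data} holds. Everything afterward uses only \eqref{comparison-initial-data} and those two sign constraints. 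Since the hypothesis of the corollary supplies precisely such data --- shifts $\tilde{\xi}_{0}^{\pm}$, perturbations $\tilde{\mu}_{0}^{\pm}$ with $\theta_{1}<1-\tilde{\mu}_{0}^{-}$ and $\tilde{\mu}_{0}^{+}<\theta_{0}$, and the trapping inequality itself --- I would simply rerun the sub-/super-solution construction of Lemma~\ref{lem-stability} with $(\xi_{0}^{\pm},\mu_{0}^{\pm})$ replaced throughout by $(\tilde{\xi}_{0}^{\pm},\tilde{\mu}_{0}^{\pm})$.

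Concretely, I would set $\om=\min\{\beta_{0},\beta_{1}\}$ and, for the lower bound, define the candidate sub-solution $u^{-}(t,x)=u(t,x-\tilde{\xi}^{-}(t))-\tilde{\mu}_{0}^{-}e^{-\om(t-t_{0})}$ with $\tilde{\xi}^{-}(t)=\tilde{\xi}_{0}^{-}-\frac{A\tilde{\mu}_{0}^{-}}{\om}(1-e^{-\om(t-t_{0})})$, and compute the residual $u^{-}_{t}-[J\ast u^{-}-u^{-}]-f(t,u^{-})$ exactly as in the lemma. The three-region analysis goes through unchanged: on the two tails (located via Lemma~\ref{lem-bounded-interface-width}) the constraints $\tilde{\mu}_{0}^{+}<\theta_{0}$ and $\theta_{1}<1-\tilde{\mu}_{0}^{-}$ are exactly what keep $u^{-}$ inside $[-1,\theta_{0}]$, respectively $[\theta_{1},2]$, where (H3) furnishes the decay rates $\beta_{0},\beta_{1}$; inside the bounded interface zone the uniform steepness of Theorem~\ref{thm-uniform-steepness} bounds $u_{x}$ away from $0$, so that choosing $A$ large (e.g. $A=\frac{2C_{*}}{-C_{M}}$ as there) absorbs the Lipschitz error. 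The comparison principle together with $u^{-}(t_{0},\cdot)\le\tilde{u}_{0}$, which is the left half of the corollary's hypothesis, yields the lower estimate; the upper estimate is symmetric. Taking $\tilde{\mu}=\max\{\tilde{\mu}_{0}^{-},\tilde{\mu}_{0}^{+}\}$ and using $e^{-\om(t-t_{0})}\le1$ then delivers the two displayed forms, including the ``in particular'' versions with constant shifts $\tilde{\xi}^{\pm}=\tilde{\xi}_{0}^{\pm}\pm\frac{A\tilde{\mu}}{\om}$.

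Because this is a verbatim transcription of an already-completed argument, there is essentially no new obstacle. The single point that genuinely must be checked --- and which I would state explicitly --- is that the proof of Lemma~\ref{lem-stability} never uses the pointwise asymptotics of the initial datum beyond producing \eqref{comparison-initial-data}; a quick audit confirms this, so the corollary is a clean hypothesis-weakening of the lemma. The reason for isolating it is practical: its trapping-type hypothesis, rather than the limit-type hypothesis of the lemma, is the form in which the estimate will be applied to $t_{0}$-indexed families of initial data and across iterated time steps, as needed for Theorem~\ref{thm-stability}(ii).
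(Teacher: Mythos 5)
Your proposal is correct and matches the paper exactly: the authors give no separate argument for Corollary~\ref{cor-stability}, stating only that ``the proof of Lemma~\ref{lem-stability} gives'' it, which is precisely your observation that the asymptotic hypotheses on $u_{0}$ enter the lemma's proof only through producing the trapping inequality \eqref{comparison-initial-data}. Your explicit audit of the three-region sub-/super-solution construction is a faithful (if more detailed) transcription of the paper's intended reasoning.
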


The next lemma is the key to the proof of Theorem \ref{thm-stability}. We will let $u(t,x;t_{0})$, $t\geq t_{0}$ be a solution with initial data
 $u_0$ at time $t_{0}\in\R$.

\begin{lem}\label{lem-stability-iteration}
There exists $\ep^{*}\in(0,1)$ such that if there holds
\begin{equation}\label{initial-estimate-123}
u(\tau,x-\hat{\xi})-\hat{\de}\leq u(\tau,x;t_{0})\leq u(\tau,x-\hat{\xi}-\hat{h})+\hat{\de},\quad x\in\R
\end{equation}
for some $\tau\geq t_{0}$, $\hat{\xi}\in\R$, $\hat{h}>0$ and $\hat{\de}\in(0,\min\{\theta_{0},1-\theta_{1}\})$, then there exist $\hat{\xi}(t)$, $\hat{h}(t)$ and $\hat{\de}(t)$ satisfying
\begin{equation*}
\begin{split}
&\hat{\xi}(t)\in[\hat{\xi}-\frac{2A\hat{\de}}{\om},\hat{\xi}+\ep^{*}\min\{1,\hat{h}\}]\\
0\leq&\hat{h}(t)\leq\hat{h}-\ep^{*}\min\{1,\hat{h}\}+\frac{4A\hat{\de}}{\om}\\
0\leq&\hat{\de}(t)\leq[\hat{\de}e^{-\om}+C^{*}\ep^{*}\min\{1,\hat{h}\}]e^{-\om(t-\tau-1)}
\end{split}
\end{equation*}
such that
\begin{equation*}
u(t,x-\hat{\xi}(t))-\hat{\de}(t)\leq u(t,x;t_{0})\leq u(t,x-\hat{\xi}(t)-\hat{h}(t))+\hat{\de}(t),\quad x\in\R
\end{equation*}
for $t\geq\tau+1$, where $A>0$ is some universal constant and $C_{*}=\sup_{(t,x)\in\R\times\R}|u_{x}(t,x)|$.
\end{lem}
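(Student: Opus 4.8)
The plan is to improve the trapping in two stages: first absorb and exponentially decay the vertical error $\hat\de$ by invoking Corollary~\ref{cor-stability}, and then genuinely shorten the horizontal gap $\hat h$ by exploiting uniform steepness (Theorem~\ref{thm-uniform-steepness}) together with the nonlocal comparison estimate of Lemma~\ref{lem-tech-1234567}, which will serve as the substitute for the unavailable Harnack inequality. I would begin by applying Corollary~\ref{cor-stability} with initial time $\tau$ to the hypothesis \eqref{initial-estimate-123}, taking $\tilde\xi_0^-=\hat\xi$, $\tilde\xi_0^+=\hat\xi+\hat h$ and $\tilde\mu_0^\pm=\hat\de$. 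This yields, for all $t\ge\tau$, a trapping of $u(t,x;t_0)$ between $u(t,x-\xi^-(t))-\hat\de e^{-\om(t-\tau)}$ and $u(t,x-\xi^+(t))+\hat\de e^{-\om(t-\tau)}$ with $\xi^-(t)\ge\hat\xi-\frac{A\hat\de}{\om}$ and $\xi^+(t)\le\hat\xi+\hat h+\frac{A\hat\de}{\om}$. This already accounts for the exponential decay of the error and, since each shift drifts by at most $\frac{A\hat\de}{\om}$, for the $\frac{2A\hat\de}{\om}$ and $\frac{4A\hat\de}{\om}$ loss terms appearing in the bounds on $\hat\xi(t)$ and $\hat h(t)$. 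The essential remaining task is to show that over the unit interval $[\tau,\tau+1]$ the gap shrinks by a definite amount $\sigma:=\ep^*\min\{1,\hat h\}$.

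To produce this shortening I would work near the interface point $x_0:=X(\tau)+\hat\xi$. Because $u$ is decreasing, the vertical separation of the two barriers there equals $\int_{X(\tau)-\hat h}^{X(\tau)}(-u_x(\tau,y))\,dy$, and by uniform steepness (Theorem~\ref{thm-uniform-steepness} applied on the window $|y-X(\tau)|\le 1$) this is at least $\ka\,\min\{1,\hat h\}$ for some front-dependent $\ka>0$: when $\hat h\ge 1$ integrate over the unit window, when $\hat h<1$ over the whole interval $[X(\tau)-\hat h,X(\tau)]$. Since $u(\tau,x_0;t_0)$ lies between the barriers, a dichotomy on whether it sits in the lower or the upper half of this gap — together with the uniform bound $C_*=\sup_{(t,x)\in\R\times\R}|u_x(t,x)|$ used to pass from the single point $x_0$ to a fixed-length interval around it — shows that at time $\tau$ the solution is separated from one of the two barriers by an amount of order $\ka\,\min\{1,\hat h\}$ on an interval of length of order one. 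The two alternatives (detaching from the upper barrier, which lowers the upper shift, versus detaching from the lower one, which raises $\hat\xi$) are precisely what forces the two-sided bound $\hat\xi(t)\in[\hat\xi-\frac{2A\hat\de}{\om},\hat\xi+\ep^*\min\{1,\hat h\}]$.

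Say the solution has detached from the upper barrier. I would then form an ordered pair consisting of $u(\cdot,\cdot;t_0)$ and the genuine super-solution $u(\cdot,\cdot-\xi^+(\cdot))+\hat\de e^{-\om(\cdot-\tau)}$ supplied by the construction in Lemma~\ref{lem-stability}, and feed it into Lemma~\ref{lem-tech-1234567} (or Corollary~\ref{cor-tech-1234567}) with $t-t_0=1$: the local strict separation over the bounded interval from the previous step propagates into a uniform separation of size $\eta\ge c\,\min\{1,\hat h\}$ over the whole interface window at time $\tau+1$, where $c>0$ is universal. This is exactly the point where the local uniform positivity (ii) of the kernel constant $C$ in Lemma~\ref{lem-tech-1234567}, evaluated on a compact range of $(t-\tau,|x-z|,h)$, is indispensable. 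Finally I would convert this uniform vertical separation into the improved trapping: inside the interface, where $-u_x\ge\ka$, a vertical separation $\eta$ is equivalent to a horizontal shift $\sigma=\ep^*\min\{1,\hat h\}$ of the offending barrier, which is what reduces the gap to $\hat h-\sigma$ (up to the drift term $\frac{4A\hat\de}{\om}$); outside the interface, near the stable states $0$ and $1$ where steepness degenerates, no shift is available, but there the separation is harmless and, by the uniform stability (H3), is simply swept into the exponentially decaying error, contributing the $C^*\ep^*\min\{1,\hat h\}$ term to $\hat\de(t)$. A last application of Corollary~\ref{cor-stability} with initial time $\tau+1$ then propagates the improved estimate to all $t\ge\tau+1$ with the stated $\hat\de(t)\le[\hat\de e^{-\om}+C^*\ep^*\min\{1,\hat h\}]e^{-\om(t-\tau-1)}$.

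The hardest part will be coordinating the vertical error $\hat\de$ with the horizontal shift throughout this argument. Unlike the reaction--diffusion case, we cannot invoke Harnack to upgrade a pointwise gap into a global one, so the whole improvement must be routed through Lemma~\ref{lem-tech-1234567}; this forces me to sandwich the solution by genuinely ordered sub-/super-solutions before applying the estimate, and to split the real line into the interface window, where steepness turns the gain into a shift, and the two end regions, where the gain is only absorbable into decaying error via (H3). Keeping the extracted constant $\ep^*$ (and the auxiliary $c,\ka$) independent of $\hat\xi$, $\hat h$, $\hat\de$, $\tau$ and $t_0$, while ensuring the bookkeeping of $\hat\xi(t),\hat h(t),\hat\de(t)$ closes exactly in the stated form, is the technical heart of the proof.
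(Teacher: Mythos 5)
Your overall architecture coincides with the paper's: apply Corollary \ref{cor-stability} to the hypothesis to obtain the exponentially decaying trapping with shifts drifting by at most $\frac{A\hat\de}{\om}$ (whence the $\frac{2A\hat\de}{\om}$ and $\frac{4A\hat\de}{\om}$ terms), run a dichotomy near the interface to detach the solution from one of the two barriers, propagate that separation to time $\tau+1$ through Lemma \ref{lem-tech-1234567} (using the local uniform positivity of its constant), trade the resulting vertical separation for a horizontal shift of the offending barrier by choosing $\ep^{*}$ so that the Taylor loss $C^{*}\ep^{*}\min\{1,\hat h\}$ is dominated by the gain near the interface and absorbed into the decaying error away from it, and finally re-apply Corollary \ref{cor-stability} from time $\tau+1$. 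All of this, including the bookkeeping, matches the paper.

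There is, however, a genuine gap in your dichotomy step. You evaluate the gap between the two barriers at the single point $x_{0}=X(\tau)+\hat\xi$, correctly find it to be at least $\ka\min\{1,\hat h\}$, conclude that the solution is at distance at least $\tfrac12\ka\min\{1,\hat h\}$ from one barrier \emph{at that point}, and then assert that the bound $C_{*}=\sup|u_{x}|$ upgrades this to a separation of order $\ka\min\{1,\hat h\}$ on an interval of length of order one. That upgrade fails when $\hat h$ is small: a nonnegative function with Lipschitz constant $2C_{*}$ that equals $\eta$ at one point need only remain $\geq\eta/2$ on an interval of length $\eta/(2C_{*})$, so the integrated separation you can feed into Lemma \ref{lem-tech-1234567} is only of order $\eta^{2}\sim\min\{1,\hat h\}^{2}$. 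This yields a contraction of the gap of size $O(\min\{1,\hat h\}^{2})$ rather than the required $\ep^{*}\min\{1,\hat h\}$; the lemma as stated is then not proved, and in the squeezing argument of Theorem \ref{thm-stability} a quadratic gain would give only algebraic, not exponential, convergence. The paper sidesteps this by performing the dichotomy on integrals over a \emph{fixed} unit window: uniform steepness gives
\begin{equation*}
\int_{X(\tau)-\frac12}^{X(\tau)+\frac12}\big[u(\tau,x-h)-u(\tau,x)\big]\,dx\;\geq\;-2C_{\rm steep}\,h,\qquad h=\min\{1,\hat h\},
\end{equation*}
so at least one of the two integrated separations $\int[u(\tau,x-h)-u(\tau,x+\hat\xi;t_{0})]\,dx$ and $\int[u(\tau,x+\hat\xi;t_{0})-u(\tau,x)]\,dx$ over that window is $\geq-C_{\rm steep}h$, which is linear in $h$ and is exactly the input Lemma \ref{lem-tech-1234567} requires. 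Replacing your pointwise dichotomy by this integrated one repairs the argument.
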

\begin{proof}
Applying Corollary \ref{cor-stability} to \eqref{initial-estimate-123}, we find
\begin{equation}\label{stability-trapping-1}
u(t,x-\hat{\xi}^{-}(t))-\hat{\de}e^{-\om(t-\tau)}\leq u(t,x;t_{0})\leq u(t,x-\hat{\xi}^{+}(t)-\hat{h})+\hat{\de}e^{-\om(t-\tau)},\quad x\in\R
\end{equation}
for $t\geq\tau$, where $\om=\min\{\beta_{0},\beta_{1}\}$ and $\hat{\xi}^{\pm}(t)=\hat{\xi}\pm\frac{A\hat{\de}}{\om}(1-e^{-\om(t-\tau)})$.

We now modify \eqref{stability-trapping-1} at $t=\tau+1$ to get a new estimate for $u(\tau+1,x;t_{0})$, and then apply Corollary \ref{cor-stability} to this new estimate to conclude the result. To this end, we set
\begin{equation*}
h=\min\{\hat{h},1\}\quad\text{and}\quad C_{\rm steep}=\frac{1}{2}\sup_{t\in\R}\sup_{|x-X(t)|\leq2}u_{x}(t,x).
\end{equation*}
By Theorem \ref{thm-uniform-steepness}, $C_{\rm steep}<0$. Taylor expansion then yields
$$\int_{X(t)-\frac{1}{2}}^{X(t)+\frac{1}{2}}[u(t,x-h)-u(t,x)]dx\geq-2C_{\rm steep}h,\quad\forall t\in\R.$$
In particular, at $t=\tau$, either
\begin{equation}\label{alternative-1}
\int_{X(\tau)-\frac{1}{2}}^{X(\tau)+\frac{1}{2}}[u(\tau,x-h)-u(\tau,x+\hat{\xi};t_{0})]dx\geq-C_{\rm steep}h
\end{equation}
or
\begin{equation}\label{alternative-2}
\int_{X(\tau)-\frac{1}{2}}^{X(\tau)+\frac{1}{2}}[u(\tau,x+\hat{\xi};t_{0})-u(\tau,x)]dx\geq-C_{\rm steep}h
\end{equation}
must be the case.

Suppose first that \eqref{alternative-2} holds. We estimate the following term $$u(\tau+1,x;t_{0})-u(\tau+1,x-\hat{\xi}^{-}(\tau+1)-\ep^{*}h)$$
from below, where $\ep^{*}>0$ is to be chosen. To do so, let $M>0$ and consider two cases: $\rm(i)$ $|x-\hat{\xi}-X(\tau)|\leq M$; $\rm(ii)$ $|x-\hat{\xi}-X(\tau)|\geq M$.

\paragraph{$\rm(i)$ $|x-\hat{\xi}-X(\tau)|\leq M$} In this case, we write
\begin{equation*}
\begin{split}
&u(\tau+1,x;t_{0})-u(\tau+1,x-\hat{\xi}^{-}(\tau+1)-\ep^{*}h)\\
&\quad\quad=[u(\tau+1,x;t_{0})-u(\tau+1,x-\hat{\xi}^{-}(\tau+1))]\\
&\quad\quad\quad+[u(\tau+1,x-\hat{\xi}^{-}(\tau+1))-u(\tau+1,x-\hat{\xi}^{-}(\tau+1)-\ep^{*}h)]\\
&\quad\quad=:{\rm(I)}+{\rm(II)}.
\end{split}
\end{equation*}
For $\rm(I)$, we argue
\begin{equation*}
\begin{split}
{\rm(I)}+\hat{\de}e^{-\om}&=u(\tau+1,x;t_{0})-[u(\tau+1,x-\hat{\xi}+\frac{A\hat{\de}}{\om}(1-e^{-\om}))-\hat{\de}e^{-\om}]\\
&=u(\tau+1,y+\hat{\xi};t_{0})-[u(\tau+1,y+\frac{A\hat{\de}}{\om}(1-e^{-\om}))-\hat{\de}e^{-\om}]\\
&\quad(\text{by}\,\,y=x-\hat{\xi}\in X(\tau)+[-M,M])\\
&=u(\tau+1,y+\hat{\xi};t_{0})-\hat{u}(\tau+1,y)\\
&\quad(\text{where}\,\,\hat{u}(t,y)=u(t,y+\frac{A\hat{\de}}{\om}(1-e^{-\om(t-\tau)}))-\hat{\de}e^{-\om(t-\tau)})\\
&\geq C(M)\int_{X(\tau)-\frac{1}{2}}^{X(\tau)+\frac{1}{2}}[u(\tau,y+\hat{\xi};t_{0})-\hat{u}(\tau,y)]dy\\
&\geq C(M)\int_{X(\tau)-\frac{1}{2}}^{X(\tau)+\frac{1}{2}}[u(\tau,y+\hat{\xi};t_{0})-u(\tau,y)]dy\geq-C(M)C_{\rm steep}h,
\end{split}
\end{equation*}
where the first inequality follows from Lemma \ref{lem-tech-1234567}. In fact, we know $u(t,y+\hat{\xi};t_{0})$ is a solution of $v_{t}=J\ast v-v+f(t,v)$, while $\hat{u}(t,y)$ is a sub-solution by the proof of Lemma \ref{lem-stability}. Moreover, $u(t,y+\hat{\xi};t_{0})\geq\hat{u}(t,y)$ by \eqref{stability-trapping-1}. Then, we apply Lemma \ref{lem-tech-1234567} with $u_{1}=\hat{u}(t,y)$ and $u_{2}=u(t,y+\hat{\xi};t_{0})$ to conclude the inequality. Hence, ${\rm(I)}\geq-\hat{\de}e^{-\om}-C(M)C_{\rm steep}h$.

For $\rm(II)$,  Taylor expansion yields for some $x_{*}\in(0,\ep^{*}h)$
\begin{equation*}
{\rm(II)}=u_{x}(\tau+1,x-\hat{\xi}^{-}(\tau+1)-x_{*})\ep^{*}h\geq-\ep^{*}h\sup_{(t,x)\in\R\times\R}|u_{x}(t,x)|\geq C(M)C_{\rm steep}h
\end{equation*}
if we choose $\ep^{*}=\min\big\{1,\frac{-C(M)C_{\rm steep}}{\sup_{(t,x)\in\R\times\R}|u_{x}(t,x)|}\big\}$. It then follows that
\begin{equation}\label{region-1}
u(\tau+1,x;t_{0})-u(\tau+1,x-\hat{\xi}^{-}(\tau+1)-\ep^{*}h)\geq-\hat{\de}e^{-\om}.
\end{equation}

\paragraph{$\rm(ii)$ $|x-\hat{\xi}-X(\tau)|\geq M$.} In this case, we have
\begin{equation}\label{region-2}
\begin{split}
&u(\tau+1,x;t_{0})-u(\tau+1,x-\hat{\xi}^{-}(\tau+1)-\ep^{*}h)\\
&\quad\quad=[u(\tau+1,x;t_{0})-u(\tau+1,x-\hat{\xi}^{-}(\tau+1))]\\
&\quad\quad\quad+[u(\tau+1,x-\hat{\xi}^{-}(\tau+1))-u(\tau+1,x-\hat{\xi}^{-}(\tau+1)-\ep^{*}h)]\\
&\quad\quad\geq-\hat{\de}e^{-\om}-\ep^{*}h\sup_{(t,x)\in\R\times\R}|u_{x}(t,x)|,
\end{split}
\end{equation}
where we used the first inequality in \eqref{stability-trapping-1} and Taylor expansion.

Hence, by \eqref{region-1}, \eqref{region-2} and the second inequality in \eqref{stability-trapping-1}, we find
\begin{equation}\label{stability-trapping-2}
\begin{split}
&u(\tau+1,x-\hat{\xi}^{-}(\tau+1)-\ep^{*}h)-\hat{\de}e^{-\om}-C^{*}\ep^{*}h\\
&\quad\quad\leq u(\tau+1,x;t_{0})\leq u(\tau+1,x-\hat{\xi}^{+}(\tau+1)-\hat{h})+\hat{\de}e^{-\om},
\end{split}
\end{equation}
where $C^{*}=\sup_{(t,x)\in\R\times\R}|u_{x}(t,x)|$. Taking $\ep_{*}$ smaller, if necessary, so that $\hat{\de}e^{-\om}+C^{*}\ep^{*}h<1-\theta_{1}$, and applying Corollary \ref{cor-stability} to \eqref{stability-trapping-2}, we conclude
\begin{equation}\label{result-1234}
u(t,x-\tilde{\xi}^{-}(t))-\tilde{\de}e^{-\om(t-\tau-1)}\leq u(t,x;t_{0})\leq u(t,x-\tilde{\xi}^{+}(t))+\tilde{\de}e^{-\om(t-\tau-1)}
\end{equation}
for $t\geq\tau+1$, where $\om=\min\{\beta_{0},\beta_{1}\}$, $\tilde{\de}=\max\{\hat{\de}e^{-\om}+C^{*}\ep^{*}h,\hat{\de}e^{-\om}\}=\hat{\de}e^{-\om}+C^{*}\ep^{*}h$ and
\begin{equation*}
\begin{split}
\tilde{\xi}^{-}(t)&=\hat{\xi}^{-}(\tau+1)+\ep^{*}h-\frac{A\hat{\de}}{\om}(1-e^{-\om(t-\tau-1)})\\
&=\hat{\xi}-\frac{2A\hat{\de}}{\om}+\ep^{*}h+\frac{A\hat{\de}}{\om}[e^{-\om}+e^{-\om(t-\tau-1)}],\\
\tilde{\xi}^{+}(t)&=\hat{\xi}^{+}(\tau+1)+\hat{h}+\frac{A\hat{\de}}{\om}(1-e^{-\om(t-\tau-1)})\\
&=\hat{\xi}+\frac{2A\hat{\de}}{\om}+\hat{h}-\frac{A\hat{\de}}{\om}[e^{-\om}+e^{-\om(t-\tau-1)}].
\end{split}
\end{equation*}
Setting
\begin{equation*}
\begin{split}
\hat{\xi}(t)&=\tilde{\xi}^{-}(t)=\hat{\xi}-\frac{2A\hat{\de}}{\om}+\ep^{*}h+\frac{A\hat{\de}}{\om}[e^{-\om}+e^{-\om(t-\tau-1)}],\\
\hat{h}(t)&=\tilde{\xi}^{+}(t)-\tilde{\xi}^{-}(t)=\hat{h}-\ep^{*}h+\frac{4A\hat{\de}}{\om}-\frac{2A\hat{\de}}{\om}[e^{-\om}+e^{-\om(t-\tau-1)}],\\
\hat{\de}(t)&=\tilde{\de}e^{-\om(t-\tau-1)}=[\hat{\de}e^{-\om}+C^{*}\ep^{*}h]e^{-\om(t-\tau-1)},
\end{split}
\end{equation*}
the estimate \eqref{result-1234} can be written as
\begin{equation}\label{result-alternative-2}
u(t,x-\hat{\xi}(t))-\hat{\de}(t)\leq u(t,x;t_{0})\leq u(t,x-\hat{\xi}(t)-\hat{h}(t))+\hat{\de}(t),\quad x\in\R,\,\,t\geq\tau+1.
\end{equation}
Note that \eqref{result-alternative-2} is obtained under the assumption \eqref{alternative-2}.

Now, we assume \eqref{alternative-1} and estimate the following term $$u(\tau+1,x;t_{0})-u(\tau+1,x-\hat{\xi}^{+}(\tau+1)-\hat{h}+\ep^{*}h)$$ from above. Arguing as before and replacing $\hat{h}$ by $h$ at appropriate steps lead to
\begin{equation*}
u(\tau+1,x;t_{0})-u(\tau+1,x-\hat{\xi}^{+}(\tau+1)-\hat{h}+\ep^{*}h)\leq\hat{\de}e^{-\om}+C^{*}\ep^{*}h,
\end{equation*}
where $C^{*}=\sup_{(t,x)\in\R\times\R}|u_{x}(t,x)|$. This, together with the first inequality in \eqref{stability-trapping-1}, yields
\begin{equation}\label{result-alternative-1}
\begin{split}
&u(\tau+1,x-\hat{\xi}^{-}(\tau+1))-\hat{\de}e^{-\om}\\
&\quad\quad\leq u(\tau+1,x;t_{0})\leq u(\tau+1,x-\hat{\xi}^{+}(\tau+1)-\hat{h}+\ep^{*}h)+\hat{\de}e^{-\om}+C^{*}\ep^{*}h.
\end{split}
\end{equation}
Then, applying Corollary \ref{cor-stability} to \eqref{result-alternative-1}, we find \eqref{result-1234} again with
\begin{equation*}
\begin{split}
\hat{\xi}(t)&=\hat{\xi}-\frac{2A\hat{\de}}{\om}+\frac{A\hat{\de}}{\om}[e^{-\om}+e^{-\om(t-\tau-1)}],\\
\hat{h}(t)&=\hat{h}-\ep^{*}h+\frac{4A\hat{\de}}{\om}-\frac{2A\hat{\de}}{\om}[e^{-\om}+e^{-\om(t-\tau-1)}],\\
\hat{\de}(t)&=[\hat{\de}e^{-\om}+C^{*}\ep^{*}h]e^{-\om(t-\tau-1)}.
\end{split}
\end{equation*}
This completes the proof.
\end{proof}

Now, we use the ``squeezing technique" (see e.g. \cite{Ch97,Sh99-1,ShSh14-1}) to prove Theorem \ref{thm-stability}.

\begin{proof}[Proof of Theorem \ref{thm-stability}]
(i) Let $u_{0}$ be the initial data as in the statement of the theorem.
 For any $t_0\in\R$, Lemma \ref{lem-stability} ensures the existence of $\xi^{\pm}=\xi^{\pm}(t_0,u_{0})\in\R$ and $\mu=\mu(u_{0})$
 (independent of $t_0$) such that
\begin{equation*}
u(t,x-\xi^{-})-\mu e^{-\om(t-t_{0})}\leq u(t,x;t_{0},u_{0})\leq u(t,x-\xi^{+})+\mu e^{-\om(t-t_{0})}
\end{equation*}
for $t\geq t_{0}$, where $\om=\min\{\beta_{0},\beta_{1}\}$. Choosing $T_{0}=T_{0}(u_{0})>0$ such that
\begin{equation*}
\de_{0}:=\mu e^{-\om T_{0}}<\de_{*}:=\min\Big\{\theta_{0},1-\theta_{1},\frac{\ep^{*}\om}{8A}\Big\}<1,
\end{equation*}
we find
\begin{equation}\label{iteration-00}
u(t_{0}+T_{0},x-\xi_{0})-\de_{0}\leq u(t_{0}+T_{0},x;t_{0},u_{0})\leq u(t_{0}+T_{0},x-\xi_{0}-h_{0})+\de_{0},
\end{equation}
where $\xi_{0}=\xi^{-}$ and $h_{0}=\xi^{+}-\xi^{-}$. Notice, we may assume, without loss of generality, that $\xi^{+}>\xi^{-}$, so $h_{0}>0$. But, $h_{0}$ depends on $u_{0}$, so we may assume, without loss of generality, that $h_{0}>1$. Let $T>1$ be such that
\begin{equation*}
[e^{-\om}+C^{*}\ep^{*}]e^{-\om (T-1)}\leq\de_{*}:=\min\Big\{\theta_{0},1-\theta_{1},\frac{\ep^{*}\om}{8A}\Big\}.
\end{equation*}
We are going to reduce $h_{0}$.

Applying Lemma \ref{lem-stability-iteration} to \eqref{iteration-00}, we find
\begin{equation}\label{iteration-01}
\begin{split}
&u(t_{0}+T_{0}+T,x-\xi_{1})-\de_{1}\\
&\quad\quad\leq u(t_{0}+T_{0}+T,x;t_{0},u_{0})\leq u(t_{0}+T_{0}+T,x-\xi_{1}-h_{1})+\de_{1},
\end{split}
\end{equation}
where
\begin{equation*}
\begin{split}
&\xi_{1}\in[\xi_{0}-\frac{2A\de_{0}}{\om},\xi_{0}+\ep^{*}\min\{1,h_{0}\}]=[\xi_{0}-\frac{2A\de_{0}}{\om},\xi_{0}+\ep^{*}]\subset[\xi_{0}-\frac{\ep^{*}}{4},\xi_{0}+\ep^{*}],\\
0\leq&h_{1}\leq h_{0}-\ep^{*}\min\{1,h_{0}\}+\frac{4A\de_{0}}{\om}=h_{0}-\ep^{*}+\frac{4A\de_{0}}{\om}\leq h_{0}-\frac{\ep^{*}}{2},\\
0\leq&\de_{1}\leq[\de_{0}e^{-\om}+C^{*}\ep^{*}\min\{1,h_{0}\}]e^{-\om (T-1)}=[\de_{0}e^{-\om}+C^{*}\ep^{*}]e^{-\om (T-1)}\leq\de_{*}.
\end{split}
\end{equation*}
If $h_{1}\leq1$, we stop. Otherwise, we apply Lemma \ref{lem-stability-iteration} to \eqref{iteration-01} to find
\begin{equation}\label{iteration-02}
\begin{split}
&u(t_{0}+T_{0}+2T,x-\xi_{2})-\de_{2}\\
&\quad\quad\leq u(t_{0}+T_{0}+2T,x;t_{0},u_{0})\leq u(t_{0}+T_{0}+2T,x-\xi_{2}-h_{2})+\de_{2},
\end{split}
\end{equation}
where
\begin{equation*}
\begin{split}
&\xi_{2}\in[\xi_{1}-\frac{2A\de_{1}}{\om},\xi_{1}+\ep^{*}\min\{1,h_{1}\}]=[\xi_{1}-\frac{2A\de_{1}}{\om},\xi_{1}+\ep^{*}]\subset[\xi_{1}-\frac{\ep^{*}}{4},\xi_{1}+\ep^{*}],\\
0\leq&h_{2}\leq h_{1}-\ep^{*}\min\{1,h_{1}\}+\frac{4A\de_{1}}{\om}=h_{1}-\ep^{*}+\frac{4A\de_{1}}{\om}\leq h_{0}-2\big(\frac{\ep^{*}}{2}\big),\\
0\leq&\de_{2}\leq[\de_{1}e^{-\om}+C^{*}\ep^{*}\min\{1,h_{1}\}]e^{-\om (T-1)}=[\de_{1}e^{-\om}+C^{*}\ep^{*}]e^{-\om (T-1)}\leq\de_{*}.
\end{split}
\end{equation*}
If $h_{2}\leq1$, we stop. Otherwise, we apply Lemma \ref{lem-stability-iteration} to \eqref{iteration-02}, and repeat this. Suppose $h_{i}>1$ for all $i=0,1,2,\dots n-1$, we then have
\begin{equation}\label{iteration-0n}
\begin{split}
&u(t_{0}+T_{0}+nT,x-\xi_{n})-\de_{n}\\
&\quad\quad\leq u(t_{0}+T_{0}+nT,x;t_{0},u_{0})\leq u(t_{0}+T_{0}+nT,x-\xi_{n}-h_{n})+\de_{n},
\end{split}
\end{equation}
where
\begin{equation*}
\begin{split}
&\xi_{n}\in[\xi_{n-1}-\frac{2A\de_{n-1}}{\om},\xi_{n-1}+\ep^{*}\min\{1,h_{n-1}\}]\subset[\xi_{n-1}-\frac{\ep^{*}}{4},\xi_{n-1}+\ep^{*}],\\
0\leq&h_{n}\leq h_{n-1}-\ep^{*}\min\{1,h_{n-1}\}+\frac{4A\de_{n-1}}{\om}=h_{n-1}-\ep^{*}+\frac{4A\de_{n-1}}{\om}\leq h_{0}-n\big(\frac{\ep^{*}}{2}\big),\\
0\leq&\de_{n}\leq[\de_{n-1}e^{-\om}+C^{*}\ep^{*}\min\{1,h_{n-1}\}]e^{-\om (T-1)}=[\de_{n-1}e^{-\om}+C^{*}\ep^{*}]e^{-\om (T-1)}\leq\de_{*}.
\end{split}
\end{equation*}
Note that since $h_{0}>1$ and $\frac{\ep^{*}}{2}\in(0,1)$, there must exist some $N=N(u_{0})>0$ such that $h_{i}>1$ for $i=0,1,2,\dots,N-1$ and $0<h_{0}-N(\frac{\ep^{*}}{2})\leq1$. In particular, $h_{N}\leq1$. Then, we stop and obtain from \eqref{iteration-0n} that
\begin{equation}\label{new-iteration-00}
u(\tilde{t}_{0},x-\tilde{\xi}_{0})-\tilde{\de}_{0}\leq u(\tilde{t}_{0},x;t_{0},u_{0})\leq u(\tilde{t}_{0},x-\tilde{\xi}_{0}-\tilde{h}_{0})+\tilde{\de}_{0},
\end{equation}
where $\tilde{t}_{0}=t_{0}+T_{0}+NT$, $\tilde{\xi}_{0}=\xi_{N}$, $\tilde{\de}_{0}=\de_{N}\leq\de_{*}$ and $\tilde{h}_{0}=h_{N}\leq1$.

Now, we treat \eqref{new-iteration-00} as the new initial estimate and run the iteration argument again. Let $\tilde{T}>1$ be such that
\begin{equation*}
[e^{-\om}+C^{*}\ep^{*}]e^{-\om(\tilde{T}-1)}\leq\min\Big\{\de_{*},1-\frac{\ep^{*}}{2},\frac{\om}{4A}\frac{\ep_{*}}{2}\big(1-\frac{\ep^{*}}{2}\big)\Big\}.
\end{equation*}
Applying Lemma \ref{lem-stability-iteration} to \eqref{new-iteration-00}, we find
\begin{equation}\label{new-iteration-01}
u(\tilde{t}_{0}+\tilde{T},x-\tilde{\xi}_{1})-\tilde{\de}_{1}\leq u(\tilde{t}_{0}+\tilde{T},x;t_{0},u_{0})\leq u(\tilde{t}_{0}+\tilde{T},x-\tilde{\xi}_{1}-\tilde{h}_{1})+\tilde{\de}_{1},
\end{equation}
where
\begin{equation*}
\begin{split}
&\tilde{\xi}_{1}\in[\tilde{\xi}_{0}-\frac{2A\tilde{\de}_{0}}{\om},\tilde{\xi}_{0}+\ep^{*}\tilde{h}_{0}],\\
0\leq&\tilde{h}_{1}\leq\tilde{h}_{0}-\ep^{*}\tilde{h}_{0}+\frac{4A\tilde{\de}_{0}}{\om}\leq 1-\frac{\ep^{*}}{2},\\
0\leq&\tilde{\de}_{1}\leq[\tilde{\de}_{0}e^{-\om}+C^{*}\ep^{*}\tilde{h}_{0}]e^{-\om (\tilde{T}-1)}\leq\min\Big\{\de_{*},1-\frac{\ep^{*}}{2},\frac{\om}{4A}\frac{\ep_{*}}{2}\big(1-\frac{\ep^{*}}{2}\big)\Big\}.
\end{split}
\end{equation*}
Applying  Lemma \ref{lem-stability-iteration} to \eqref{new-iteration-01}, we find
\begin{equation*}
u(\tilde{t}_{0}+2\tilde{T},x-\tilde{\xi}_{2})-\tilde{\de}_{2}\leq u(\tilde{t}_{0}+2\tilde{T},x;t_{0},u_{0})\leq u(\tilde{t}_{0}+2\tilde{T},x-\tilde{\xi}_{2}-\tilde{h}_{2})+\tilde{\de}_{2},
\end{equation*}
where
\begin{equation*}
\begin{split}
&\tilde{\xi}_{2}\in[\tilde{\xi}_{1}-\frac{2A\tilde{\de}_{1}}{\om},\tilde{\xi}_{1}+\ep^{*}\tilde{h}_{1}],\\
0\leq&\tilde{h}_{2}\leq\tilde{h}_{1}-\ep^{*}\tilde{h}_{1}+\frac{4A\tilde{\de}_{1}}{\om}\leq (1-\frac{\ep^{*}}{2})(1-\ep^{*})+\frac{\ep_{*}}{2}\big(1-\frac{\ep^{*}}{2}\big)=(1-\frac{\ep^{*}}{2})^{2},\\
0\leq&\tilde{\de}_{2}\leq[\tilde{\de}_{1}e^{-\om}+C^{*}\ep^{*}\tilde{h}_{1}]e^{-\om (\tilde{T}-1)}\leq(1-\frac{\ep^{*}}{2})\times\min\Big\{\de_{*},1-\frac{\ep^{*}}{2},\frac{\om}{4A}\frac{\ep_{*}}{2}\big(1-\frac{\ep^{*}}{2}\big)\Big\}.
\end{split}
\end{equation*}
Applying Lemma \ref{lem-stability-iteration} repeatedly, we find for $n\geq3$
\begin{equation}\label{estimate-n-th-101010}
u(\tilde{t}_{0}+n\tilde{T},x-\tilde{\xi}_{n})-\tilde{\de}_{n}\leq u(\tilde{t}_{0}+n\tilde{T},x;t_{0},u_{0})\leq u(\tilde{t}_{0}+n\tilde{T},x-\tilde{\xi}_{n}-\tilde{h}_{n})+\tilde{\de}_{n},
\end{equation}
where
\begin{equation*}
\begin{split}
&\tilde{\xi}_{n}\in[\tilde{\xi}_{n-1}-\frac{2A\tilde{\de}_{n-1}}{\om},\tilde{\xi}_{n-1}+\ep^{*}\tilde{h}_{n-1}],\\
0\leq&\tilde{h}_{n}\leq\tilde{h}_{n-1}-\ep^{*}\tilde{h}_{n-1}+\frac{4A\tilde{\de}_{n-1}}{\om}\leq (1-\frac{\ep^{*}}{2})^{n-1}(1-\ep^{*})+\frac{\ep_{*}}{2}\big(1-\frac{\ep^{*}}{2}\big)^{n-1}=(1-\frac{\ep^{*}}{2})^{n},\\
0\leq&\tilde{\de}_{n}\leq[\tilde{\de}_{n-1}e^{-\om}+C^{*}\ep^{*}\tilde{h}_{n-1}]e^{-\om (\tilde{T}-1)}\leq(1-\frac{\ep^{*}}{2})^{n-1}\times\min\Big\{\de_{*},1-\frac{\ep^{*}}{2},\frac{\om}{4A}\frac{\ep^{*}}{2}\big(1-\frac{\ep^{*}}{2}\big)\Big\}.
\end{split}
\end{equation*}

The result then follows readily. In fact, applying Corollary \ref{cor-stability} to \eqref{estimate-n-th-101010} for $n\geq0$, we find, in particular,
$$
u(t,x-\tilde{\xi}_{n}+\frac{A}{\om}\tilde{\de}_{n})-\tilde{\de}_{n}\leq u(t,x;t_{0},u_{0})\leq u(t,x-\tilde{\xi}_{n}-\tilde{h}_{n}-\frac{A}{\om}\tilde{\de}_{n})+\tilde{\de}_{n}
$$
for all $t\geq \tilde{t}_{0}+n\tilde{T}$. Therefore, for $t\in[\tilde{t}_{0}+n\tilde{T},\tilde{t}_{0}+(n+1)\tilde{T})$, setting $\tilde{\de}(t)=\tilde{\de}_{n}$, $\tilde{\xi}(t)=\tilde{\de}_{n}-\frac{A}{\om}\tilde{\de}_{n}$ and $\tilde{h}(t)=\tilde{h}_{n}+\frac{2A}{\om}\tilde{\de}_{n}$, we arrive at
$$
u(t,x-\tilde{\xi}(t))-\tilde{\de}(t)\leq u(t,x;t_{0},u_{0})\leq u(t,x-\tilde{\xi}(t)-\tilde{h}(t))+\tilde{\de}(t).
$$
Hence, we find
$$
u(t,x-\tilde{\xi}(t))-\tilde{\de}(t)\leq u(t,x;t_{0},u_{0})\leq u(t,x-\tilde{\xi}(t)-\tilde{h}(t))+\tilde{\de}(t),\quad t\geq \tilde{t}_{0},
$$
which then yields the result, since $\tilde{\de}(t)\to0$, $\tilde{\xi}(t)\to\tilde{\xi}(\infty)$ and $\tilde{h}(t)\to0$ exponentially as $t\to\infty$.

We remark that the dependence of $C$ on $u_{0}$ in the statement of the theorem is due to the dependence of $T_{0}$ on $u_{0}$.

(ii) By Corollary \ref{cor-stability}, we see
\begin{equation*}
u(t,x-\xi^{-})-\mu_{0}e^{-\om(t-t_{0})}\leq u(t,x;t_{0},u_{t_{0}})\leq u(t,x-\xi^{+})+\mu_{0}e^{-\om(t-t_{0})},\quad x\in\R
\end{equation*}
for all $t\geq t_{0}$ and $t_{0}\in\R$, where $\om=\min\{\beta_{0},\beta_{1}\}$ and $\xi^{\pm}=\xi^{\pm}_{0}\pm\frac{A\mu_{0}}{\om}$.
Then, by the arguments as in (i), there exist $t_{0}$-independent constants $C>0$ and $\om_{*}>0$, and a family of shifts $\{\xi_{t_{0}}\}_{t_{0}\in\R}\subset\R$ satisfying $\sup_{t_{0}\in\R}|\xi_{t_{0}}|<\infty$ such that
\begin{equation*}
\sup_{x\in\R}|u(t,x;t_{0},u_{t_{0}})-u(t,x-\xi_{t_{0}})|\leq Ce^{-\om_{*}(t-t_{0})}
\end{equation*}
for all $t\geq t_{0}$ and $t_{0}\in\R$.
\end{proof}


\section{Exponential decaying estimates of space non-increasing transition fronts}\label{sec-decaying-estimate}

In this section, we prove exponential decaying estimates of space non-increasing transition fronts of \eqref{main-eqn}. Throughout this section, we assume (H1)-(H3) and assume that $u(t,x)$ is a transition front of \eqref{main-eqn} with interface location functions $X(t)$ and $X_{\la}(t)$ and $u_x(t,x)\le 0$.

The main results in this section are stated in the following theorem.

\begin{thm}\label{thm-expo-decaying}
There exist $c^{\pm}>0$ and $h^{\pm}>0$ such that
\begin{equation*}
u(t,x)\leq e^{-c^{+}(x-X(t)-h^{+})}\quad\text{and}\quad1-u(t,x)\leq e^{c^{-}(x-X(t)+h^{-})}
\end{equation*}
for all $(t,x)\in\R\times\R$. In particular, for any $\la\in(0,1)$, there exist $h_{\la}^{\pm}>0$ such that
\begin{equation*}
u(t,x)\leq e^{-c^{+}(x-X_{\la}(t)-h_{\la}^{+})}\quad\text{and}\quad1-u(t,x)\leq e^{c^{-}(x-X_{\la}(t)+h_{\la}^{-})}
\end{equation*}
for all $(t,x)\in\R\times\R$.
\end{thm}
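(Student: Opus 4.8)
The plan is to trap the front between translated exponentials by constructing, far ahead of and far behind the interface, explicit exponential super-solutions of the problem \emph{linearised} at the stable states $0$ and $1$, and then comparing. I will carry out the first inequality (decay of $u$ to $0$ as $x\to\infty$) in detail; the second follows by applying the identical scheme to $\tilde u:=1-u$, which solves $\tilde u_t=J\ast\tilde u-\tilde u-f(t,1-\tilde u)$, vanishes as $x\to-\infty$, and for which $\beta_1$ plays the role of $\beta_0$. First, using Lemma \ref{lem-bounded-interface-width} I fix $M>0$ with $u(t,x)\leq\theta_0$ whenever $x-X(t)\geq M$, so that on the moving region $\Omega:=\{(t,x):x-X(t)>M\}$ assumption (H3) and $f(t,0)=0$ give $f(t,u)\leq-\beta_0 u$. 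The candidate is $v(t,x):=e^{-c(x-X(t)-h)}$ with $c>0$ small and $h>M$ to be chosen.

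The core is a linear computation. Writing $\hat J(c):=\int_\R J(z)e^{cz}\,dz$ (finite by (H1)), the symmetry of $J$ gives $J\ast v=\hat J(c)\,v$ and $v_t=c\dot X(t)\,v$, hence $v_t-(J\ast v-v)=[\,c\dot X(t)-(\hat J(c)-1)\,]v$. Since $\hat J(0)=1$ and $c\mapsto\hat J(c)$ is continuous, I may pick $c=c^+>0$ so small that $\hat J(c)-1<\beta_0$; combined with $\dot X(t)\geq c_{\min}>0$ (see \eqref{interface-eq}) this makes $c\dot X(t)-(\hat J(c)-1)+\beta_0>0$ for every $t$. Now set $w:=u-v$. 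Once $h>M$, on $\Omega^c$ one has $v\geq e^{c(h-M)}>1>u$, so $w<0$ there; thus any point where $w>0$ lies in $\Omega$ and forces $v<\theta_0$, so $u,v\in[0,\theta_0)$. At such points $f(t,u)-f(t,v)\leq-\beta_0 w$ and $f(t,v)\leq-\beta_0 v$, and subtracting the identity for $v$ from the equation for $u$ yields
\begin{equation*}
w_t\leq J\ast w-w-\beta_0 w\qquad\text{wherever } w>0 .
\end{equation*}

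It remains to conclude $w\leq0$, and this is the step I expect to be the real obstacle: $u$ is an \emph{entire} solution, so there is no initial time from which to launch a comparison, and $v$ is a super-solution only in the far field where it is small. The remedy is to exploit the coercive term $-\beta_0 w$ by comparing $w$ with the spatially constant $q_s(t):=a\,e^{-\beta_0(t-s)}$, where $a:=\sup_{(t,x)}w^+\in[0,1]$ and $s\in\R$. Because $J\ast q_s=q_s$, the function $q_s$ solves $\phi_t=J\ast\phi-\phi-\beta_0\phi$ exactly, while $w(s,\cdot)\leq a=q_s(s)$ and $w\leq0\leq q_s$ on $\Omega^c$; the comparison principle (Appendix \ref{sec-app-cp}) applied on $[s,\infty)$ then gives $w(t,x)\leq a\,e^{-\beta_0(t-s)}$ for all $t\geq s$. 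Fixing $(t,x)$ and letting $s\to-\infty$ forces $w(t,x)\leq0$, i.e. $u\leq v$ on $\Omega$; on $\Omega^c$ the bound $u<1\leq v$ is immediate. Hence $u(t,x)\leq e^{-c^+(x-X(t)-h^+)}$ with $h^+:=h$.

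The symmetric argument for $\tilde u=1-u$, with super-solution $e^{c^-(x-X(t)+h^-)}$, multiplier $-c\dot X(t)-(\hat J(c)-1)$ (now controlled via $\dot X\leq c_{\max}$), and $\beta_1$ in place of $\beta_0$, gives $1-u(t,x)\leq e^{c^-(x-X(t)+h^-)}$. Finally, for the ``in particular'' assertion I invoke Lemma \ref{lem-bounded-interface-width} once more to set $d_\la:=\sup_{t\in\R}|X(t)-X_\la(t)|<\infty$ and absorb it into the shift, taking $h_\la^{\pm}:=h^{\pm}+d_\la$, since $x-X(t)\geq x-X_\la(t)-d_\la$.
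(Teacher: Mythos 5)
Your strategy is genuinely different from the paper's and is, at its core, sound. The paper does not compare the entire front with an exponential barrier directly; it first proves the decay for auxiliary solutions $u^{\pm}(t,x;t_{0})$ of the Cauchy problem launched at time $t_{0}$ from step-like data (Lemma \ref{lem-expo-decaying-sepcial-data}, where Proposition \ref{prop-app-comparison}(i) applies verbatim because there \emph{is} an initial time and the relevant difference vanishes as $x\to\infty$), and then transfers the bound to $u$ itself via the uniform exponential stability of Theorem \ref{thm-stability}(ii), letting $t_{0}\to-\infty$ along a subsequence of the bounded shifts $\xi^{\pm}_{t_0}$. You replace that entire mechanism (and hence the dependence on Section \ref{sec-stability} and on Lemmas \ref{lem-properties-auxiliary-fun}--\ref{lem-uniform-stability-special-data}) by the decaying constant barrier $q_{s}(t)=a\,e^{-\beta_{0}(t-s)}$ and the limit $s\to-\infty$. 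This is shorter, correctly identifies the real obstacle (no initial time for an entire solution), and only uses Lemma \ref{lem-bounded-interface-width}, the bounds on $\dot X$, and a comparison principle.

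The one step that does not close as written is the appeal to ``the comparison principle (Appendix \ref{sec-app-cp})'' to conclude $w\leq q_{s}$. None of the three parts of Proposition \ref{prop-app-comparison} applies verbatim to $V:=q_{s}-w$: part (i) requires $\lim_{x\to\infty}V(t,x)=0$, whereas here $V(t,x)\to q_{s}(t)>0$; part (iii) requires the differential inequality on all of $\R$, whereas you establish $w_{t}\leq J\ast w-w-\beta_{0}w$ only on the moving half-line $x>X(t)+M$ (and, as phrased, only where $w>0$; note the inequality actually holds on all of $\{x>X(t)+M\}$, since there $f(t,u)\leq-\beta_{0}u$ and $v$ is a super-solution of the linearised equation everywhere, which is what any comparison argument will need). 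This gap is real but readily repaired: either observe that the proof of Proposition \ref{prop-app-comparison}(i) only uses $\liminf_{x\to\infty}V(t,x)\geq0$ locally uniformly in $t$, which holds here; or, staying strictly within the stated propositions, enlarge $h$ so that
$\big[c\,c_{\min}-\int_{\R}J(y)e^{cy}dy+1+\beta_{0}\big]e^{c(h-M)}\geq\sup_{t\in\R,\,u\in[0,1]}|f(t,u)|+\beta_{0}$.
Then on $x\leq X(t)+M$ the large positive term $\big[c\dot X(t)-\int_{\R}J(y)e^{cy}dy+1+\beta_{0}\big]v$ dominates $f(t,u)+\beta_{0}u$, so $w_{t}\leq J\ast w-w-\beta_{0}w$ holds on all of $\R\times(s,\infty)$, and Proposition \ref{prop-app-comparison}(iii) applied to the bounded-below function $V$ yields $w\leq q_{s}$ directly. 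With that repair, and the symmetric argument for $1-u$, your proof is complete.
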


To prove Theorem \ref{thm-expo-decaying}, we first prove several lemmas. Let $\theta_{2}\in(0,\min\{\frac{1}{4},\theta_{0},1-\theta_{1}\})$ be small and $h>0$, and define $u_{0}^{\pm}:\R\to[0,1]$ to be smooth and non-increasing functions satisfying
\begin{equation}\label{two-special-functions}
u_{0}^{+}(x)=\begin{cases}
1-\theta_{2},\quad&x\leq-h,\\
0,\quad&x\geq0,
\end{cases}
\quad \text{and}\quad
u_{0}^{-}(x)=\begin{cases}
1,\quad&x\leq0,\\
\theta_{2},\quad&x\geq h.
\end{cases}
\end{equation}
Moreover, we can make $u_{0}^{+}$ decreasing on $(-h,0)$ and $u_{0}^{-}$ decreasing on $(0,h)$. For $t_{0}\in\R$, we define
\begin{equation*}
\begin{split}
u^{+}(t,x;t_{0})&:=u(t,x;t_{0},u_{0}^{+}(\cdot-X_{1-\theta_{2}}(t_{0}))),\\
u^{-}(t,x;t_{0})&:=u(t,x;t_{0},u_{0}^{-}(\cdot-X_{\theta_{2}}(t_{0})))
\end{split}
\end{equation*}
for $t\geq t_{0}$.

\begin{lem}\label{lem-properties-auxiliary-fun}
$u^{\pm}(t,x;t_{0})$ satisfy the following properties:
\begin{itemize}
\item[\rm(i)] $u^{\pm}(t,x;t_{0})$ are decreasing in $x$ for any $t>t_{0}$;

\item[\rm(ii)] for any $t>t_{0}$, we have
\begin{equation*}
\begin{split}
&\lim_{x\to-\infty}u^{+}(t,x;t_{0})>1-\theta_{2},\quad \lim_{x\to\infty}u^{+}(t,x;t_{0})=0,\\
&\lim_{x\to-\infty}u^{-}(t,x;t_{0})=1\quad\text{and}\quad \lim_{x\to\infty}u^{-}(t,x;t_{0})<\theta_{2}.
\end{split}
\end{equation*}
\end{itemize}
\end{lem}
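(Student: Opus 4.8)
The plan is to treat the two assertions separately: monotonicity by the comparison principle together with Lemma \ref{lem-tech-1234567}, and the four limits by reducing the far-field behaviour to the spatially homogeneous ODE $u_t=f(t,u)$.

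For part (i), I would first record that $u^{\pm}(t,x;t_{0})$ are non-increasing in $x$. Since \eqref{main-eqn} is invariant under spatial translation, for any $\epsilon>0$ both $u^{+}(t,x+\epsilon;t_{0})$ and $u^{+}(t,x;t_{0})$ solve \eqref{main-eqn}, and at $t=t_{0}$ the monotonicity of the profile $u_{0}^{+}$ gives $u^{+}(t_{0},x+\epsilon;t_{0})\le u^{+}(t_{0},x;t_{0})$; the comparison principle (Appendix \ref{sec-app-cp}) then propagates this to all $t\ge t_{0}$, and likewise for $u^{-}$. To upgrade this to strict monotonicity for $t>t_{0}$, I would apply Lemma \ref{lem-tech-1234567} with $u_{1}(t,x)=u^{+}(t,x+\epsilon;t_{0})$ and $u_{2}(t,x)=u^{+}(t,x;t_{0})$ (both are genuine solutions, hence sub- and super-solutions, with $u_{1}\le u_{2}$), obtaining, for a suitable center $z$ and half-width $\rho>0$ (the lemma's parameter, renamed to avoid clashing with the constant $h$),
\[
u^{+}(t,x+\epsilon;t_{0})-u^{+}(t,x;t_{0})\le C\int_{z-\rho}^{z+\rho}\big[u_{0}^{+}(y+\epsilon-X_{1-\theta_{2}}(t_{0}))-u_{0}^{+}(y-X_{1-\theta_{2}}(t_{0}))\big]\,dy
\]
with $C>0$. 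Choosing $[z-\rho,z+\rho]\subset(X_{1-\theta_{2}}(t_{0})-h,\,X_{1-\theta_{2}}(t_{0}))$, where $u_{0}^{+}$ is strictly decreasing, makes the right-hand integral strictly negative, giving strict monotonicity; the same argument applies to $u^{-}$ via the strictly decreasing part of $u_{0}^{-}$.

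For part (ii), monotonicity in $x$ together with the bounds $0\le u^{\pm}\le 1$ (comparison with the constant solutions $0$ and $1$) ensures that the one-sided limits $L^{+}_{\pm\infty}(t):=\lim_{x\to\pm\infty}u^{+}(t,x;t_{0})$ and $L^{-}_{\pm\infty}(t):=\lim_{x\to\pm\infty}u^{-}(t,x;t_{0})$ exist for every $t>t_{0}$. The crucial step is to show that each limit function solves $w_{t}=f(t,w)$. I would pass to the limit $x\to\pm\infty$ in the variation-of-constants formulation of \eqref{main-eqn}, the point being that the nonlocal term
\[
[J\ast u^{\pm}](t,x)-u^{\pm}(t,x)=\int_{\R}J(z)\big[u^{\pm}(t,x-z)-u^{\pm}(t,x)\big]\,dz\longrightarrow 0\quad\text{as}\quad x\to\pm\infty,
\]
which follows by dominated convergence using $0\le u^{\pm}\le 1$, $\int_{\R}J=1$ and the integrability in (H1). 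A second dominated-convergence argument in the time variable then shows that $L^{\pm}_{\pm\infty}$ solve $w_{t}=f(t,w)$ with initial data read from the profiles, namely $L^{+}_{-\infty}(t_{0})=1-\theta_{2}$, $L^{+}_{+\infty}(t_{0})=0$, $L^{-}_{-\infty}(t_{0})=1$ and $L^{-}_{+\infty}(t_{0})=\theta_{2}$.

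It remains to analyze these ODE solutions using (H3). Since $f(t,0)=f(t,1)=0$, uniqueness forces $L^{+}_{+\infty}\equiv 0$ and $L^{-}_{-\infty}\equiv 1$, which are the two equalities. For the two strict inequalities, integrating (H3) against $f(t,0)=0$ and $f(t,1)=0$ yields $f(t,u)\le-\beta_{0}u<0$ for $u\in(0,\theta_{0}]$ and $f(t,u)\ge\beta_{1}(1-u)>0$ for $u\in[\theta_{1},1)$. Because $\theta_{2}<\min\{\theta_{0},1-\theta_{1}\}$, the value $1-\theta_{2}$ lies in $(\theta_{1},1)$ and $\theta_{2}$ lies in $(0,\theta_{0})$; hence $L^{+}_{-\infty}$ is strictly increasing and stays below $1$, giving $L^{+}_{-\infty}(t)>1-\theta_{2}$ for $t>t_{0}$, while $L^{-}_{+\infty}$ is strictly decreasing and stays above $0$, giving $L^{-}_{+\infty}(t)<\theta_{2}$ for $t>t_{0}$. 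The main obstacle I anticipate is the rigorous justification of the ODE reduction: one must control the convolution term uniformly enough to interchange the limit $x\to\pm\infty$ with the time integration, and confirm that the merely monotone (hence measurable) limit functions are bona fide solutions of $w_{t}=f(t,w)$ so that uniqueness and the sign analysis apply. The decay of $J$ in (H1) and the uniform bound $0\le u^{\pm}\le 1$ are precisely what make these dominated-convergence steps valid.
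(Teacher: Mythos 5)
Your proof is correct, but it departs from the paper's argument in both parts, in ways worth noting. For the strict monotonicity in (i), the paper simply invokes the strong comparison principle (the ``Moreover'' part of Proposition \ref{prop-app-comparison}(iii)) applied to the nonnegative, not-identically-zero difference $u^{\pm}(t,x;t_{0})-u^{\pm}(t,x+\ep;t_{0})$; your route through Lemma \ref{lem-tech-1234567}, localizing the initial integral on the strictly decreasing part of $u_{0}^{\pm}$, reaches the same conclusion with a bit more machinery but also yields a quantitative lower bound on the drop, which the paper only needs later (in Theorem \ref{thm-uniform-steepness}). For (ii), the paper proves $\lim_{x\to\infty}u^{+}=0$ by sandwiching $u_{0}^{+}$ under a shifted traveling wave $\phi_{\tilde B}(\cdot-x_{1}-c_{\tilde B}t)$ of the $f_{\tilde B}$-equation, and handles $\lim_{x\to-\infty}u^{+}>1-\theta_{2}$ by an Arzel\`{a}--Ascoli extraction along $x_{n}\to-\infty$ (using bounds on $u^{+}_{t}$ and $u^{+}_{tt}$) to identify the limit as the ODE solution, then compares $f$ with $f_{B}$. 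You instead treat all four limits uniformly by passing to the limit in the integral (Duhamel) formulation, killing the convolution term by dominated convergence; this is a clean and legitimate substitute for both the traveling-wave comparison (Lipschitz uniqueness of the ODE through $0$ and $1$ replaces it) and the Arzel\`{a}--Ascoli step, and your sign analysis via the explicit bounds $f(t,u)\leq-\beta_{0}u$ on $(0,\theta_{0}]$ and $f(t,u)\geq\beta_{1}(1-u)$ on $[\theta_{1},1)$ derived from (H3) is equivalent to the paper's comparison of $f$ with $f_{B}$ (note $1-\theta_{2}>\theta_{1}$ and $\theta_{2}<\theta_{0}$, so both strict inequalities follow). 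The only point to make explicit is that the limit function, a priori only a pointwise limit of continuous functions, inherits continuity from the integral identity $L(t)=L(t_{0})+\int_{t_{0}}^{t}f(s,L(s))\,ds$ (the integrand being bounded), after which it is a genuine $C^{1}$ solution and the uniqueness and sign arguments apply; you flag exactly this issue and the dominated-convergence bounds you cite do close it.
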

\begin{proof}
(i) It follows from the fact that $u^{\pm}_{0}$ are non-increasing and the ``Moreover" part in Proposition \ref{prop-app-comparison}(iii) or Proposition \ref{prop-app-cp-nonlinear}(ii).

(ii) By (i), the limits $\lim_{x\to\pm\infty}u^{+}(t,x;t_{0})$ are well-defined. We show $\lim_{x\to\infty}u^{+}(t,x;t_{0})=0$. Let $\phi_{\tilde{B}}(x-c_{\tilde{B}}t)$ be a traveling wave of $u_{t}=J\ast u-u+f_{\tilde{B}}(u)$ such that $\phi_{\tilde{B}}(-\infty)=1$ and $\phi_{\tilde{B}}(\infty)=0$. By the definition of $u^{+}_{0}$, we can find a shift $x_{1}\gg1$ such that
\begin{equation*}
u_{0}^{+}(\cdot-X_{1-\theta_{2}}(t_{0}))\leq\phi_{\tilde{B}}(\cdot-x_{1}-c_{\tilde{B}}t_{0}).
\end{equation*}
It then follows from comparison principle that $u^{+}(t,\cdot;t_{0})\leq\phi_{\tilde{B}}(\cdot-x_{1}-c_{\tilde{B}}t)$ for any $t>0$. From which, we conclude $\lim_{x\to\infty}u^{+}(t,x;t_{0})=0$.

We show $\lim_{x\to-\infty}u^{+}(t,x;t_{0})>1-\theta_{2}$. Note that $u^{+}(t,x;t_{0})$ satisfies
\begin{equation}\label{eqn-aux-limit-17890}
u^{+}_{t}(t,x;t_{0})=\int_{\R}J(x-y)u^{+}(t,y;t_{0})dy-u^{+}(t,x;t_{0})+f(t,u^{+}(t,x;t_{0})),\quad t>t_{0},
\end{equation}
and
\begin{equation*}
\begin{split}
u^{+}_{tt}(t,x;t_{0})&=\int_{\R}J(x-y)u^{+}_{t}(t,y;t_{0})dy-u_{t}^{+}(t,x;t_{0})\\
&\quad+f_{t}(t,u^{+}(t,x;t_{0}))+f_{u}(t,u^{+}(t,x;t_{0}))u^{+}_{t}(t,x;t_{0}),\quad t>t_{0}.
\end{split}
\end{equation*}
Pick an arbitrary sequence $\{x_{n}\}$ with $x_{n}\to-\infty$ as $n\to\infty$. We see that there is an $M>0$ such that
\begin{equation*}
\max\big\{|u_{t}^{+}(t,x_{n};t_{0})|,|u_{t}^{+}(t,x_{n};t_{0})|\big\}\leq M,\quad t>t_{0},\quad n\geq1.
\end{equation*}
Since $u^{+}(t,x_{n};t_{0})\in[0,1]$ for all $t\geq t_{0}$ and $n\geq1$, we conclude from the Arzel\`{a}-Ascoli theorem that there exists a continuous function $w:[t_{0},\infty)\to[0,1]$, differentiable on $(t_{0},\infty)$ such that
\begin{equation*}
\begin{split}
&u^{+}(t,x_{n};t_{0})\to w(t)\,\,\text{locally uniformly in $t\in[t_{0},\infty)$ as}\,\, n\to\infty,\quad \text{and}\\
&u_{t}^{+}(t,x_{n};t_{0})\to w_{t}(t)\,\,\text{locally uniformly in $t\in(t_{0},\infty)$ as}\,\, n\to\infty.
\end{split}
\end{equation*}
As a consequence, letting $x\to-\infty$ along the sequence $\{x_{n}\}$ in \eqref{eqn-aux-limit-17890}, we find that $w(t)$ is the unique solution of
\begin{equation*}
\begin{cases}
w_{t}(t)=f(t,w(t)),\quad t>t_{0}\\
w(t_{0})=1-\theta_{2}.
\end{cases}
\end{equation*}
Now, comparing $f(t,u)$ with $f_{B}(u)$, we conclude from the comparison principle for ODEs that $w(t)>1-\theta_{2}$ for all $t>t_{0}$. But the monotonicity of $u^{+}(t,x;t_{0})$ in $x$ from (i) yields
\begin{equation*}
\lim_{x\to-\infty}u^{+}(t,x;t_{0})=\lim_{n\to\infty}u^{+}(t,x_{n};t_{0})=w(t)>1-\theta_{2},\quad t>t_{0}.
\end{equation*}

The limits $\lim_{x\to-\infty}u^{-}(t,x;t_{0})=1$ and $\lim_{x\to\infty}u^{-}(t,x;t_{0})<\theta_{2}$ follow from similar arguments, and therefore, we omit the proof.
\end{proof}

By Lemma \ref{lem-properties-auxiliary-fun}, for any $\la\in(\theta_{2},1-\theta_{2})$, the interface locations $X_{\la}^{\pm}(t;t_{0})\in\R$ such that 
$$
u^{\pm}(t,X_{\la}^{\pm}(t;t_{0});t_{0})=\la
$$ 
are well-defined for all $t\geq t_{0}$.

The first lemma gives the uniform boundedness of the gap between the interface locations of $u^{\pm}(t,x;t_{0})$ and $u(t,x)$.

\begin{lem}\label{lem-bounded-interface-width-1}
For any $\la\in(\theta_{2},1-\theta_{2})$, there hold
\begin{equation*}
\sup_{t_{0}\in\R}\sup_{t\geq t_{0}}|X^{\pm}_{\la}(t;t_{0})-X(t)|<\infty.
\end{equation*}
\end{lem}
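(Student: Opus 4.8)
The plan is to trap each auxiliary solution $u^{\pm}(t,\cdot\,;t_0)$ uniformly between two shifts of the transition front $u(t,\cdot)$ by means of Corollary \ref{cor-stability}, and then to read off the bound on the interface location $X_\la^{\pm}(t;t_0)$ by evaluating this trapping at the interface and invoking the monotonicity of $u(t,\cdot)$ together with Lemma \ref{lem-bounded-interface-width}. I would carry out the argument for $u^{+}$; the estimate for $u^{-}$ follows along identical lines after interchanging the roles of the two end states. The one real difficulty is that the bound is required for \emph{all} $t\geq t_0$, including $t$ close to $t_0$, where any stability estimate is only of order one; the device that resolves this is the specific slack $\mu_0=\theta_2$ matched to the auxiliary profiles, which keeps the comparison levels strictly inside $(0,1)$ at every time and so makes a separate small-time analysis unnecessary.

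First I would establish a uniform initial trapping. Writing $u_0^{+}$ as in \eqref{two-special-functions}, the datum $u_0^{+}(\cdot-X_{1-\theta_2}(t_0))$ is a fixed monotone profile, translated by $X_{1-\theta_2}(t_0)$, whose top value is $1-\theta_2$. Since the front has uniformly bounded interface width (the uniform-in-$t$ limits in Definition \ref{defn-tf}, equivalently Lemma \ref{lem-bounded-interface-width}) and since $\sup_{t_0}|X_{1-\theta_2}(t_0)-X(t_0)|<\infty$, I can choose shifts $\xi_0^{\pm}\in\R$ independent of $t_0$ and set $\mu_0=\theta_2$ so that
\[
u(t_0,x-\xi_0^{-})-\mu_0\leq u_0^{+}(x-X_{1-\theta_2}(t_0))\leq u(t_0,x-\xi_0^{+})+\mu_0,\quad x\in\R,\ t_0\in\R.
\]
The choice $\mu_0=\theta_2$ is precisely what absorbs the mismatch between the top value $1-\theta_2$ of $u_0^{+}$ and the value $1$ attained by the front on the far left; and $\mu_0=\theta_2<\min\{\theta_0,1-\theta_1\}$ by the standing restriction on $\theta_2$, so the slack is admissible for the comparison result.

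Second, I would feed this into Corollary \ref{cor-stability} (with $\tilde{\mu}_0^{\pm}=\theta_2$, hence $\tilde{\mu}=\theta_2$), obtaining, for all $t\geq t_0$,
\[
u(t,x-\tilde{\xi}^{-}(t))-\theta_2 e^{-\om(t-t_0)}\leq u^{+}(t,x;t_0)\leq u(t,x-\tilde{\xi}^{+}(t))+\theta_2 e^{-\om(t-t_0)},\quad x\in\R,
\]
where $\om=\min\{\beta_0,\beta_1\}$ and $\tilde{\xi}^{\pm}(t)=\xi_0^{\pm}\pm\frac{A\theta_2}{\om}(1-e^{-\om(t-t_0)})$ satisfy $\sup_{t_0\in\R}\sup_{t\geq t_0}|\tilde{\xi}^{\pm}(t)|<\infty$ because $\xi_0^{\pm}$ are $t_0$-independent.

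Finally I would extract the interface bound. Evaluating the trapping at $x=X_\la^{+}(t;t_0)$, where $u^{+}=\la$, gives
\[
\la-\theta_2 e^{-\om(t-t_0)}\leq u(t,X_\la^{+}(t;t_0)-\tilde{\xi}^{+}(t))\quad\text{and}\quad u(t,X_\la^{+}(t;t_0)-\tilde{\xi}^{-}(t))\leq \la+\theta_2 e^{-\om(t-t_0)}.
\]
Since $\la\in(\theta_2,1-\theta_2)$ we have $\theta_2 e^{-\om(t-t_0)}\leq\theta_2<\min\{\la,1-\la\}$, so the comparison values $\la\mp\theta_2 e^{-\om(t-t_0)}$ lie in the fixed interval $[\la-\theta_2,\la+\theta_2]\subset(0,1)$ for every $t\geq t_0$; this is the crucial uniformity. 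Using that $u(t,\cdot)$ is strictly decreasing with $u(t,X_c(t))=c$ and that $X_c(t)$ is non-increasing in $c$, the first inequality forces $X_\la^{+}(t;t_0)-\tilde{\xi}^{+}(t)\leq X_{\la-\theta_2}(t)$ and the second forces $X_\la^{+}(t;t_0)-\tilde{\xi}^{-}(t)\geq X_{\la+\theta_2}(t)$. As $\la\pm\theta_2\in(0,1)$ are fixed levels, Lemma \ref{lem-bounded-interface-width} bounds $|X_{\la\pm\theta_2}(t)-X(t)|$ uniformly, and combined with the uniform bound on $\tilde{\xi}^{\pm}(t)$ this yields $\sup_{t_0}\sup_{t\geq t_0}|X_\la^{+}(t;t_0)-X(t)|<\infty$. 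The same argument applied to $u^{-}$, now using $X_{\theta_2}(t_0)$ and the slack $\mu_0=\theta_2$ to absorb the mismatch at the bottom state, completes the proof.
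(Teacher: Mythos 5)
Your proof is correct and follows essentially the same route as the paper: an initial sandwich of $u_0^{\pm}$ between shifts of $u(t_0,\cdot)$ with slack $\theta_2$, Corollary \ref{cor-stability} to propagate the trapping uniformly in $t_0$, and monotonicity plus Lemma \ref{lem-bounded-interface-width} to convert it into interface bounds. The only (immaterial) difference is that the paper obtains the one-sided bound $X^{+}_{\la}(t;t_0)\leq X_{\la}(t)$ directly from the plain comparison $u^{+}(t,\cdot;t_0)\leq u(t,\cdot)$, using the stability corollary only for the other side.
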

\begin{proof}
Let $\la\in(\theta_{2},1-\theta_{2})$. By the definition of $u_{0}^{+}$, we see that $u_{0}^{+}(x-X_{1-\theta_{2}}(t_{0}))\leq u(t_{0},x)$ for $x\in\R$. Comparison principle then yields $u^{+}(t,x;t_{0})\leq u(t,x)$ for $x\in\R$ and $t\geq t_{0}$. In particular, $X^{+}_{\la}(t;t_{0})\leq X_{\la}(t)$ for all $t\geq t_{0}$.

Moreover, we readily check that
$$u_{0}^{+}(x-X_{\theta_{2}}(t_{0})-h)+\theta_{2}\geq u(t_{0},x),$$
which is equivalent to
\begin{equation*}
u(t_{0},x+X_{\theta_{2}}(t_{0})+h-X_{1-\theta_{2}}(t_{0}))-\theta_{2}\leq u_{0}^{+}(x-X_{1-\theta_{2}}(t_{0}))=u^{+}(t_{0},x;t_{0}).
\end{equation*}
Setting $L:=\sup_{t_{0}\in\R}|X_{\theta_{2}}(t_{0})+h-X_{1-\theta_{2}}(t_{0})|<\infty$, we see from the monotonicity of $u(t,x)$ in $x$ that $$u(t_{0},x-(-L))-\theta_{2}\leq u^{+}(t_{0},x;t_{0}).$$
Since $L$ and $\theta_{2}$ are $t_{0}$-independent, we apply Theorem \ref{thm-stability} to conclude that
\begin{equation*}
u(t,x-(-L-\frac{A\theta_{2}}{\om}))-\theta_{2}\leq u(t,x-(-L-\frac{A\theta_{2}}{\om}))-\theta_{2}e^{-\om(t-t_{0})}\leq u^{+}(t,x;t_{0}),\quad x\in\R
\end{equation*}
for all $t\geq t_{0}$ and $t_{0}\in\R$. Setting $x=-L-\frac{A\theta_{2}}{\om}+X_{\la+\theta_{2}}(t)$, we find
$$\la\leq u^{+}(t,-L-\frac{A\theta_{2}}{\om}+X_{\la+\theta_{2}}(t);t_{0}),$$
which implies by monotonicity that
$$X_{\la}^{+}(t;t_{0})\geq-L-\frac{A\theta_{2}}{\om}+X_{\la+\theta_{2}}(t)\quad\text{for all}\quad t\geq t_{0}.$$

Hence, we have shown that
\begin{equation*}
X^{+}_{\la}(t;t_{0})\leq X_{\la}(t)\quad\text{and}\quad X_{\la}^{+}(t;t_{0})\geq-L-\frac{A\theta_{2}}{\om}+X_{\la+\theta_{2}}(t)
\end{equation*}
for all $t\geq t_{0}$ and $t_{0}\in\R$. Since $\sup_{t\in\R}|X_{\la}(t)-X_{\la+\theta_{2}}(t)|<\infty$, we arrive at
\begin{equation*}
\sup_{t_{0}\in\R}\sup_{t\geq t_{0}}|X^{+}_{\la}(t;t_{0})-X_{\la}(t)|<\infty,
\end{equation*}
which is clearly equivalent to $\sup_{t_{0}\in\R}\sup_{t\geq t_{0}}|X^{+}_{\la}(t;t_{0})-X(t)|<\infty$.

The another result $\sup_{t_{0}\in\R}\sup_{t\geq t_{0}}|X^{-}_{\la}(t;t_{0})-X(t)|<\infty$ follows along the same line.
\end{proof}

Next, we prove the uniform exponential decaying estimates of $u^{\pm}(t,x;t_{0})$.

\begin{lem}\label{lem-expo-decaying-sepcial-data}
There exist $c^{\pm}>0$ and $h^{\pm}>0$ such that
\begin{equation*}\label{expo-decaying-sepcial-data}
u^{+}(t,x;t_{0})\leq e^{-c^{+}(x-X(t)-h^{+})}\quad\text{and}\quad u^{-}(t,x;t_{0})\geq 1-e^{c^{-}(x-X(t)+h^{-})}
\end{equation*}
for all $x\in\R$, $t\geq t_{0}$ and $t_{0}\in\R$.
\end{lem}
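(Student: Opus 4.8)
The plan is to prove both estimates by comparison with explicit exponential super- and sub-solutions whose profiles are rigidly attached to the interface location $X(t)$. I focus on the bound for $u^{+}$; the bound for $u^{-}$ follows by the symmetric construction applied to $1-u^{-}$, which solves $v_{t}=J\ast v-v+g(t,v)$ with $g(t,v)=-f(t,1-v)$, a nonlinearity having exactly the same stable structure near $v=0$ by the second inequality in (H3). The basic object is $\bar{u}(t,x)=e^{-c^{+}(x-X(t)-h^{+})}$ for constants $c^{+}>0$, $h^{+}>0$ to be chosen. The key point enabling the whole scheme is that for a pure exponential the nonlocal term is computed exactly: by the symmetry and the exponential-moment bound in (H1),
\begin{equation*}
[J\ast\bar{u}](t,x)=\Big(\int_{\R}J(z)e^{c^{+}z}\,dz\Big)\,\bar{u}(t,x),
\end{equation*}
so that, writing $\kappa(c)=\int_{\R}J(z)e^{cz}\,dz$ (with $\kappa(0)=1$ and $\kappa$ convex), one has $[J\ast\bar u-\bar u](t,x)=(\kappa(c^{+})-1)\bar u(t,x)$ with no error term.

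With this, I would compute the defect
\begin{equation*}
\bar u_{t}-[J\ast\bar u-\bar u]-f(t,\bar u)=\big(c^{+}\dot{X}(t)-(\kappa(c^{+})-1)\big)\bar u-f(t,\bar u).
\end{equation*}
Wherever $\bar u\le\theta_{0}$, (H3) gives $f(t,\bar u)=f(t,\bar u)-f(t,0)\le-\beta_{0}\bar u$, and since $\dot X(t)\ge c_{\min}>0$ by \eqref{interface-eq}, the right-hand side is bounded below by $\big(\beta_{0}-(\kappa(c^{+})-1)\big)\bar u$. Because $\kappa(c^{+})-1\to0$ as $c^{+}\to0^{+}$, I can fix $c^{+}>0$ so small that this is $\ge\delta\bar u$ for some $\delta>0$; hence $\bar u$ is a strict super-solution precisely on the set $\{\bar u\le\theta_{0}\}$. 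For $1-u^{-}$ the same computation uses $\dot X(t)\le c_{\max}$ to absorb the sign-reversed drift term together with the constant $\beta_{1}$ from (H3).

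The remaining, and main, difficulty is that $\{\bar u\le\theta_{0}\}$ is only a right half-plane, so $\bar u$ is not a super-solution near and to the left of the interface, and the nonlocal comparison must be arranged so that the super-solution property is invoked only where it holds. I would fix the shift $M>0$ large enough that simultaneously (i) $u^{+}(t,x;t_{0})\le\theta_{0}$ for all $x\ge X(t)+M$, $t\ge t_{0}$, $t_{0}\in\R$, which is possible by Lemma \ref{lem-bounded-interface-width-1} applied at the level $\theta_{0}\in(\theta_{2},1-\theta_{2})$, and (ii) $X(t_{0})+M\ge X_{1-\theta_{2}}(t_{0})$ for all $t_{0}$, possible by Lemma \ref{lem-bounded-interface-width}. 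Taking $h^{+}=M$ and setting $d_{0}=\frac{1}{c^{+}}\ln(1/\theta_{0})$, one checks: on $\{x\le X(t)+M\}$ one has $\bar u\ge1\ge u^{+}$; on the strip $\{X(t)+M<x\le X(t)+M+d_{0}\}$, where $\theta_{0}<\bar u$, one has $u^{+}\le\theta_{0}\le\bar u$ by (i); and at $t=t_{0}$ the choice (ii) gives $u^{+}(t_{0},\cdot;t_{0})\le\bar u(t_{0},\cdot)$. Thus $u^{+}\le\bar u$ holds on the whole complement of $\Omega=\{t>t_{0},\,x>X(t)+M+d_{0}\}$, including its lateral boundary, while $\bar u$ is a strict super-solution on $\Omega=\{\bar u<\theta_{0}\}$. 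Feeding the complement estimate as the additional input required in the nonlocal setting, a comparison principle on the moving domain $\Omega$ (of the type in Appendix \ref{sec-app-cp}) then yields $u^{+}\le\bar u$ on $\Omega$ as well, i.e. $u^{+}(t,x;t_{0})\le e^{-c^{+}(x-X(t)-h^{+})}$ for all $(t,x)$ and all $t_{0}$.

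I expect the genuine obstacle to be exactly this last step: unlike the reaction–diffusion case, the absence of a pointwise maximum principle and the global coupling through $J\ast$ mean that a naive first-touching-point argument does not close, and one must supply the complement estimate $u^{+}\le\bar u$ on $\Omega^{c}$ to a nonlocal comparison principle on the moving domain $\Omega$. Everything upstream — the exact exponential identity, the choice of small $c^{\pm}$, and the rigid placement of the profile via the uniform interface bounds of Lemmas \ref{lem-bounded-interface-width} and \ref{lem-bounded-interface-width-1} — is arranged precisely so that the super-solution property is never needed on the intermediate range $\theta_{0}<\bar u<1$, where $f(t,\cdot)$ is not sign-controlled.
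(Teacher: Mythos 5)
Your proposal is correct and follows essentially the same route as the paper: an exponential super-solution $e^{-c^{+}(x-X(t)-h^{+})}$ rigidly attached to the interface, made into a super-solution for small $c^{+}$ using $\dot X(t)\ge c_{\min}>0$ together with $\int_{\R}J(z)e^{cz}dz\to1$ as $c\to0$, placed via Lemmas \ref{lem-bounded-interface-width} and \ref{lem-bounded-interface-width-1} so that the comparison is only ever invoked where the relevant function lies below $\theta_{0}$, and closed by the half-line comparison principle of Proposition \ref{prop-app-comparison}(i). The only (cosmetic) difference is that the paper channels the comparison through the linear operator $N[u]=u_{t}-[J\ast u-u]+\beta_{0}u$, for which $N[\bar u]\ge0$ holds globally and the moving-domain restriction is needed only to get $N[u^{+}]\le0$ on $\{u^{+}\le\theta_{0}\}$, whereas you verify the nonlinear super-solution property of $\bar u$ on $\{\bar u\le\theta_{0}\}$ and cover the intermediate strip $\{u^{+}\le\theta_{0}<\bar u\}$ by a separate direct estimate --- both reduce to the same appendix proposition.
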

\begin{proof}
We prove the first estimate; the second one can be proven in a similar way. Note first that $f(t,u)\leq-\beta_{0}u$ for $u\in[0,\theta_{0}]$. Let $h:=\sup_{t\geq t_{0}}|X_{\theta_{0}}^{+}(t;t_{0})-X(t)|<\infty$ by Lemma \ref{lem-bounded-interface-width-1}, since $\theta_{0}\in(\theta_{2},1-\theta_{2})$. We consider
\begin{equation*}
N[u]=u_{t}-[J\ast u-u]+\beta_{0}u.
\end{equation*}

Since $u^{+}(t,x;t_{0})\leq\theta_{0}$ for $x\geq X^{+}_{\theta_{0}}(t;t_{0})$, we find
$$N[u^{+}]=\beta_{0}u^{+}+f(t,u^{+})\leq0\quad \text{for}\quad x\geq X^{+}_{\theta_{0}}(t;t_{0}).$$
In particular, $N[u^{+}]\leq0$ for $x\geq X(t)+h$.

Now, let $c>0$. We see
\begin{equation*}
N[e^{-c(x-X(t)-h)}]=\bigg[c\dot{X}(t)-\int_{\R}J(y)e^{cy}dy+1+\beta_{0}\bigg]e^{-c(x-X(t)-h)}.
\end{equation*}
Since $\dot{X}(t)\geq c_{\min}>0$ by \eqref{interface-eq} and $\int_{\R}J(y)e^{cy}dy\to1$ as $c\to0$, we can find some $c_{*}>0$ such that $N[e^{-c_{*}(x-X(t)-h)}]\geq0$. Thus, we have
\begin{itemize}
\item $N[u^{+}(t,x;t_{0})]\leq0\leq N[e^{-c_{*}(x-X(t)-h)}]$ for $x\geq X(t)+h$ and $t>t_{0}$,
\item $u^{+}(t,x;t_{0})<1\leq e^{-c_{*}(x-X(t)-h)}$ for $x\leq X(t)+h$ and $t>t_{0}$,
\item $u^{+}(t_{0},x;t_{0})=u_{0}^{+}(x-X_{1-\theta_{2}}(t_{0}))\leq e^{-c_{*}(x-X(t_{0})-h)}$ for $x\in\R$.
\end{itemize}
We then conclude from Proposition \ref{prop-app-comparison}(i) that $u^{+}(t,x;t_{0})\leq e^{-c_{*}(x-X(t)-h)}$ for all $x\in\R$, $t\geq t_{0}$ and $t_{0}\in\R$. This completes the proof.
\end{proof}

We also need the uniform-in-$t_{0}$ exponential convergence of $u^{\pm}(t,x;t_{0})$ to $u(t,x)$.

\begin{lem}\label{lem-uniform-stability-special-data}
There exist $t_{0}$-independent constants $C>0$ and $\om_{*}>0$, and two families of shifts $\{\xi^{\pm}_{t_{0}}\}_{t_{0}\in\R}\subset\R$ satisfying $\sup_{t_{0}\in\R}|\xi^{\pm}_{t_{0}}|<\infty$ such that
\begin{equation*}
\sup_{x\in\R}|u^{\pm}(t,x;t_{0})-u(t,x-\xi^{\pm}_{t_{0}})|\leq Ce^{-\om_{*}(t-t_{0})}
\end{equation*}
for all $t\geq t_{0}$ and $t_{0}\in\R$.
\end{lem}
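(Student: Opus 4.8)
The plan is to reduce everything to Theorem \ref{thm-stability}(ii), which was stated precisely for families of initial data of this kind. The only thing that needs to be checked is that the initial profiles $u^{\pm}(t_{0},\cdot;t_{0})=u_{0}^{\pm}(\cdot-X_{\bullet}(t_{0}))$ are trapped between fixed shifts of $u(t_{0},\cdot)$ by constants that do not depend on $t_{0}$; the exponential convergence and the uniform boundedness of the resulting shifts $\xi^{\pm}_{t_{0}}$ are then immediate consequences of that theorem.

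For $u^{+}$ the required trapping has in fact already been produced inside the proof of Lemma \ref{lem-bounded-interface-width-1}. First, by the definition of $u_{0}^{+}$ together with the monotonicity of $u(t_{0},\cdot)$ one has $u_{0}^{+}(x-X_{1-\theta_{2}}(t_{0}))\le u(t_{0},x)$ for all $x\in\R$, which is the right-hand inequality in the hypothesis of Theorem \ref{thm-stability}(ii) with $\xi_{0}^{+}=0$. Second, setting $L:=\sup_{t_{0}\in\R}|X_{\theta_{2}}(t_{0})+h-X_{1-\theta_{2}}(t_{0})|$, which is finite by the bounded interface width (Lemma \ref{lem-bounded-interface-width}), the same proof yields $u(t_{0},x+L)-\theta_{2}\le u_{0}^{+}(x-X_{1-\theta_{2}}(t_{0}))$, i.e. the left-hand inequality with $\xi_{0}^{-}=-L$ and $\mu_{0}=\theta_{2}$. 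Since $\theta_{2}\in(0,\min\{\theta_{0},1-\theta_{1}\})$ by its choice, the hypothesis of Theorem \ref{thm-stability}(ii) is met with $t_{0}$-independent data, and applying that theorem to $\{u_{0}^{+}(\cdot-X_{1-\theta_{2}}(t_{0}))\}_{t_{0}\in\R}$ gives the claim for $u^{+}$.

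For $u^{-}$ one argues symmetrically. The lower bound $u(t_{0},x)\le u_{0}^{-}(x-X_{\theta_{2}}(t_{0}))$ is immediate from the definition of $u_{0}^{-}$, the monotonicity of $u(t_{0},\cdot)$ and the identity $u(t_{0},X_{\theta_{2}}(t_{0}))=\theta_{2}$, so the left-hand inequality holds with $\xi_{0}^{-}=0$. For the upper bound, monotonicity shows $u(t_{0},x-L)\ge 1-\theta_{2}$ whenever $x\le X_{\theta_{2}}(t_{0})+h$ (by the definition of $L$), while $u_{0}^{-}\le 1$ there and equals $\theta_{2}$ beyond; hence $u_{0}^{-}(x-X_{\theta_{2}}(t_{0}))\le u(t_{0},x-L)+\theta_{2}$ with the same $L$, the right-hand inequality with $\xi_{0}^{+}=L$ and $\mu_{0}=\theta_{2}$. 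Theorem \ref{thm-stability}(ii) applied to $\{u_{0}^{-}(\cdot-X_{\theta_{2}}(t_{0}))\}_{t_{0}\in\R}$ then gives the claim for $u^{-}$.

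Combining the two applications produces $t_{0}$-independent constants $C>0$ and $\om_{*}>0$ and two families $\{\xi^{\pm}_{t_{0}}\}$ with $\sup_{t_{0}}|\xi^{\pm}_{t_{0}}|<\infty$, as asserted. The one point demanding care is the genuine $t_{0}$-independence of every constant entering the trapping, above all of $L$ and $\theta_{2}$; this is exactly where the bounded interface width of $u$, in the form $\sup_{t\in\R}|X_{\la}(t)-X_{\la'}(t)|<\infty$ from Lemma \ref{lem-bounded-interface-width}, is the essential input. Because Theorem \ref{thm-stability}(ii) already supplies the uniform-in-$t_{0}$ stability, no further obstacle arises, and the proof is complete.
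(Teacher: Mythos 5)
Your proposal is correct and takes essentially the same route as the paper: establish a $t_{0}$-independent trapping of $u^{\pm}(t_{0},\cdot;t_{0})$ between shifts of $u(t_{0},\cdot)$ (the paper uses $C_{2}+h$ with $C_{2}=\sup_{t}|X_{\theta_{2}}(t)-X_{1-\theta_{2}}(t)|$ where you use $L$, and takes $\mu_{0}=\max\{\theta_{2},\ep_{0}\}$, but these are cosmetic differences) and then invoke Theorem \ref{thm-stability}(ii).
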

\begin{proof}
Let $C_{2}=\sup_{t\in\R}|X_{\theta_{2}}(t)-X_{1-\theta_{2}}(t)|<\infty$. Then, it is easy to see that for any $t_{0}\in\R$
\begin{equation*}
\begin{split}
&u(t_{0},x+C_{2}+h)-\theta_{2}\leq u_{0}^{+}(x-X_{1-\theta_{2}}(t_{0}))\leq u(t_{0},x)+\ep_{0},\quad x\in\R\\
&u(t_{0},x)-\ep_{0}\leq u_{0}^{-}(x-X_{\theta_{2}}(t_{0}))\leq u(t_{0},x-C_{2}-h)+\theta_{2},\quad x\in\R
\end{split}
\end{equation*}
for arbitrary fixed $\ep_{0}\in(0,\min\{\frac{1}{4},\theta_{0},1-\theta_{1}\})$, that is,
\begin{equation*}
\begin{split}
&u(t_{0},x+C_{2}+h)-\mu_{0}\leq u^{+}(t_{0},x;t_{0})\leq u(t_{0},x)+\mu_{0},\quad x\in\R\\
&u(t_{0},x)-\mu_{0}\leq u^{-}(t_{0},x;t_{0})\leq u(t_{0},x-C_{2}-h)+\mu_{0},\quad x\in\R,
\end{split}
\end{equation*}
where $\mu_{0}=\max\{\theta_{2},\ep_{0}\}$. Since $C_{2}$, $h$ and $\mu_{0}$ are independent of $t_{0}\in\R$, we apply
Theorem \ref{thm-stability} to conclude the result.
\end{proof}

Finally, we prove Theorem \ref{thm-expo-decaying}.

\begin{proof}[Proof of Theorem \ref{thm-expo-decaying}]
By Lemma \ref{lem-expo-decaying-sepcial-data} and Lemma \ref{lem-uniform-stability-special-data}, we have
\begin{equation*}
u(t,x-\xi^{+}_{t_{0}})\leq u^{+}(t,x;t_{0})+Ce^{-\om_{*}(t-t_{0})}\leq e^{-c^{+}(x-X(t)-h^{+})}+Ce^{-\om_{*}(t-t_{0})}
\end{equation*}
for all $x\in\R$ and $t\geq t_{0}$. Since $\sup_{t_{0}\in\R}|\xi_{t_{0}}^{+}|<\infty$, there exists $\xi^{+}\in\R$ such that $\xi_{t_{0}}^{+}\to\xi^{+}$ as $t_{0}\to-\infty$ along some subsequence. Thus, for any $(t,x)\in\R\times\R$, letting $t_{0}\to-\infty$ along this subsequence, we find $u(t,x-\xi^{+})\leq e^{-c^{+}(x-X(t)-h^{+})}$. The lower bound for $u(t,x)$ follows similarly. The ``in particular" part then is a simple consequence of the fact that $\sup_{t\in\R}|X_{\la}(t)-X(t)|<\infty$ for any $\la\in(0,1)$.
\end{proof}



\section{Uniqueness and monotonicity of transition fronts}\label{sec-uniqueness}

In this section, we study the uniqueness and monotonicity of transition fronts of \eqref{main-eqn} under the assumptions Hypothesis (H1)-(H3) and
the assumption that \eqref{main-eqn} has a space non-increasing transition front $u(t,x)$.

Let $v(t,x)$ be an arbitrary transition front (not necessarily non-increasing in space), and $u(t,x)$ be an arbitrary space non-increasing transition front of \eqref{main-eqn}.
Let $Y(t)$, $Y_{\la}^{\pm}(t)$ be the interface location functions of $v(t,x)$, and $X(t)$, $X_{\la}(t)=X_{\la}^{\pm}(t)$ be the interface location functions of $u(t,x)$. By Proposition \ref{prop-regularity-of-tf}, we may assume that both $X(t)$ and $Y(t)$ are continuously differentiable and satisfy \eqref{interface-eq}. By Corollary \ref{cor-diff-interface-mono}, $X_{\la}(t)$ is continuously differentiable. But, $Y_{\la}^{\pm}(t)$ may have a jump.

We prove

\begin{thm}\label{thm-uniqeness}
There exists some $\xi\in\R$ such that $v(t,x)=u(t,x+\xi)$ for all $(t,x)\in\R\times\R$. In particular, $v(t,x)$ is non-increasing in $x$.
\end{thm}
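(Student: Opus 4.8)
The strategy is to use the uniform exponential stability of Theorem~\ref{thm-stability} with the initial datum taken to be the front $v$ itself, and then to pass to the limit $t_0\to-\infty$. Since $v(t,x)$ is a global-in-time solution of \eqref{main-eqn}, uniqueness of solutions gives $u(t,\cdot\,;t_0,v(t_0,\cdot))=v(t,\cdot)$ for all $t\ge t_0$, so any stability estimate will compare $v(t,\cdot)$ directly with a space shift of $u(t,\cdot)$.

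First I would trap, for each fixed $t_0$, the profile $v(t_0,\cdot)$ between two space shifts of $u(t_0,\cdot)$: there exist $\xi_0^\pm(t_0)\in\R$ and a small $\mu_0\in(0,\min\{\theta_0,1-\theta_1\})$ with
\[
u(t_0,x-\xi_0^-(t_0))-\mu_0\le v(t_0,x)\le u(t_0,x-\xi_0^+(t_0))+\mu_0,\qquad x\in\R .
\]
The decisive observation is that, although each individual shift behaves like $Y(t_0)-X(t_0)$ and may well be unbounded in $t_0$, the \emph{width} $\xi_0^+(t_0)-\xi_0^-(t_0)$ and the constant $\mu_0$ can be chosen independently of $t_0$. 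Indeed, both $u$ and $v$ have uniformly bounded interface width (Lemma~\ref{lem-bounded-interface-width}, which does not use monotonicity and hence applies to $v$), so the levels at which $u(t_0,\cdot)$ and $v(t_0,\cdot)$ cross a fixed value lie within a $t_0$-uniform distance of $X(t_0)$ and $Y(t_0)$ respectively; matching these crossings fixes the width up to $t_0$-uniform constants, the divergent part $Y(t_0)-X(t_0)$ cancelling.

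Next, with $u_0=v(t_0,\cdot)$ — whose limits $1$ at $-\infty$ and $0$ at $+\infty$ satisfy the hypotheses of Theorem~\ref{thm-stability}(i) uniformly in $t_0$ — I would invoke that theorem. Because the constant $C$ produced there depends on the datum only through the trapping width $\xi_0^+-\xi_0^-$ and the constant $\mu_0$ (cf. the remark following its proof), both of which are $t_0$-uniform by the previous step, the constants $C$ and $\om_*$ are $t_0$-independent, while only the shift $\xi(t_0)$ may depend on $t_0$:
\[
\sup_{x\in\R}\big|v(t,x)-u(t,x-\xi(t_0))\big|\le C e^{-\om_*(t-t_0)},\qquad t\ge t_0 .
\]
Fixing $(t,x)$ and letting $t_0\to-\infty$ drives the right-hand side to $0$, whence $u(t,x-\xi(t_0))\to v(t,x)$ for every $(t,x)$. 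To conclude, I would show $\xi(t_0)$ has a finite limit $\xi_\infty$: fixing $t$ and choosing $x_*$ with $v(t,x_*)=\la\in(0,1)$ (possible by the intermediate value theorem, as $v(t,\cdot)$ is continuous with limits $1$ and $0$), the convergence $u(t,x_*-\xi(t_0))\to\la$ together with $u(t,\cdot)$ being a strictly decreasing homeomorphism of $\R$ onto $(0,1)$ forces $x_*-\xi(t_0)$, hence $\xi(t_0)$, to converge. Since $\{\xi(t_0)\}$ is a single real family, $\xi_\infty$ is one number, independent of $t$ and $x$, and the limit identity reads $v(t,x)=u(t,x-\xi_\infty)$ for all $(t,x)$; taking $\xi=-\xi_\infty$ gives the claim, with monotonicity of $v$ inherited from $u$. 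As a byproduct this shows $\sup_{t\in\R}|Y(t)-X(t)|<\infty$.

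The main obstacle is the uniformity in the first step: one must trap $v(t_0,\cdot)$ by shifts of $u(t_0,\cdot)$ with a $t_0$-uniform width and a $t_0$-uniform $\mu_0$, so that the stability constants do not degenerate as $t_0\to-\infty$. This is precisely where the \emph{uniform} (rather than merely pointwise-in-$t_0$) character of the bounded interface width is essential, and it is the reason the uniform exponential stability of Theorem~\ref{thm-stability}, as opposed to a plain exponential stability, is the right tool.
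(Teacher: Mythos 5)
Your proposal is correct and follows the same overall strategy as the paper (trap $v(t_0,\cdot)$ between space shifts of $u(t_0,\cdot)$, apply the uniform exponential stability of Theorem~\ref{thm-stability}, and let $t_0\to-\infty$), but it takes a genuine shortcut at the one place where the paper does extra work. The paper first proves Lemma~\ref{lem-bounded-interface-width-uniqueness-general}, namely $\sup_{t\in\R}|X(t)-Y(t)|<\infty$, by a two-sided argument (forward trapping via Corollary~\ref{cor-stability} for $t\ge 0$, and a contradiction argument sending an auxiliary shift to $\pm\infty$ for $t\le 0$); this makes the trapping shifts themselves $t_0$-independent (namely $\pm L$), so Theorem~\ref{thm-stability}(ii) applies verbatim, yields uniformly bounded shifts $\xi_{t_0}$, and one extracts a convergent subsequence. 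You instead observe that only the \emph{width} of the trapping need be $t_0$-uniform --- which follows directly from Lemma~\ref{lem-bounded-interface-width} applied to both fronts, the divergent parts $Y(t_0)-X(t_0)$ cancelling --- and you recover the convergence (hence boundedness) of the shifts a posteriori from $u(t,x_*-\xi(t_0))\to v(t,x_*)\in(0,1)$ together with $u(t,\cdot)$ being a decreasing homeomorphism onto $(0,1)$; the boundedness of $Y-X$ then drops out as a corollary rather than being an input. This is sound because \eqref{main-eqn} is translation invariant in $x$ and the constants in the proof of Theorem~\ref{thm-stability} ($T_0$ through $\mu$, the iteration count $N$ through the width $h_0=\xi_0^+-\xi_0^-$) depend on the datum only through $\mu_0$ and $h_0$ --- but be aware that this is a mild strengthening of Theorem~\ref{thm-stability}(ii) as literally stated, which requires $\xi_0^{\pm}$ independent of $t_0$, so your argument rests on re-inspecting that proof, whereas the paper's extra lemma buys a verbatim citation. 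Both routes are valid; yours is shorter, the paper's is more self-contained at the level of theorem statements.
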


To show Theorem \ref{thm-uniqeness}, we first prove the following lemma.

\begin{lem}\label{lem-bounded-interface-width-uniqueness-general}
There holds $\sup_{t\in\R}|X(t)-Y(t)|<\infty$.
\end{lem}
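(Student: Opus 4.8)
The plan is to control the signed gap $d(t):=Y(t)-X(t)$ between the two interface location functions by combining a one-time trapping of $v$ between shifts of $u$, the trapping-propagation estimate of Corollary \ref{cor-stability}, and the Lipschitz bound \eqref{interface-eq}. First I would record that both fronts have uniformly bounded interface widths: applying (the proof of) Lemma \ref{lem-bounded-interface-width} to each of $u$ and $v$, there are $t$-independent constants $R_{u},R_{v}>0$, and, after fixing $\mu_{0}\in(0,\min\{\theta_{0},1-\theta_{1}\})$, a width $h_{\mu_{0}}>0$, such that $u(t,y)\geq1-\mu_{0}$ for $y\leq X(t)-h_{\mu_{0}}$, $u(t,y)\leq\mu_{0}$ for $y\geq X(t)+h_{\mu_{0}}$, $|X_{1/4}(t)-X(t)|\leq R_{u}$, and every point $x$ with $v(t,x)=\tfrac12$ satisfies $|x-Y(t)|\leq R_{v}$.

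Using these bounds, at an arbitrary initial time $t_{0}$ I would sandwich $v(t_{0},\cdot)$ between two shifts of $u(t_{0},\cdot)$ centered at $d(t_{0})$. Choosing $D>0$ independent of $t_{0}$ large enough (so that the two shifted copies of $u$ have their interfaces a distance $D$ to the left and right of $Y(t_{0})$), one checks directly from the width bounds and $0\leq v\leq1$ that
\[
u(t_{0},x-d(t_{0})+D)-\mu_{0}\leq v(t_{0},x)\leq u(t_{0},x-d(t_{0})-D)+\mu_{0},\quad x\in\R.
\]
The essential point is that the trapping shifts $d(t_{0})\mp D$ stay within the fixed distance $D$ of $d(t_{0})$. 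Feeding this into Corollary \ref{cor-stability} with initial time $t_{0}$ gives, for all $t\geq t_{0}$,
\[
u(t,x-\tilde{\xi}^{-}(t))-\mu_{0}e^{-\om(t-t_{0})}\leq v(t,x)\leq u(t,x-\tilde{\xi}^{+}(t))+\mu_{0}e^{-\om(t-t_{0})},
\]
where $\om=\min\{\beta_{0},\beta_{1}\}$ and $|\tilde{\xi}^{\pm}(t)-d(t_{0})|\leq D+\tfrac{A\mu_{0}}{\om}$.

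Next I would convert this into a growth estimate for $d$. Fixing a threshold $\tau_{0}$ with $\mu_{0}e^{-\om\tau_{0}}<\tfrac14$ and evaluating the upper inequality at any $t\geq t_{0}+\tau_{0}$ at a point $x_{t}$ with $v(t,x_{t})=\tfrac12$, I get $u(t,x_{t}-\tilde{\xi}^{+}(t))>\tfrac14$, hence $x_{t}-\tilde{\xi}^{+}(t)<X_{1/4}(t)$ by the monotonicity of $u(t,\cdot)$. Unwinding with $|x_{t}-Y(t)|\leq R_{v}$, $|X_{1/4}(t)-X(t)|\leq R_{u}$, and $\tilde{\xi}^{+}(t)\leq d(t_{0})+D+\tfrac{A\mu_{0}}{\om}$ yields $d(t)\leq d(t_{0})+C_{1}$ with $C_{1}:=R_{u}+R_{v}+D+\tfrac{A\mu_{0}}{\om}$ independent of $t_{0}$; the symmetric argument on the lower inequality gives $d(t)\geq d(t_{0})-C_{1}$. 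Thus $|d(t)-d(t_{0})|\leq C_{1}$ whenever $t\geq t_{0}+\tau_{0}$.

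Finally I would upgrade this into a uniform bound. Since both $X$ and $Y$ satisfy \eqref{interface-eq}, $d$ is Lipschitz with constant at most $c_{\max}-c_{\min}$, so $|d(t)-d(s)|\leq(c_{\max}-c_{\min})\tau_{0}$ whenever $|t-s|\leq\tau_{0}$. Comparing every $t$ to the reference time $0$ — via the previous step when $|t|\geq\tau_{0}$ (taking $t_{0}=0$ for $t\geq\tau_{0}$ and $t_{0}=t$ for $t\leq-\tau_{0}$) and via the Lipschitz bound when $|t|<\tau_{0}$ — gives $\sup_{t\in\R}|d(t)-d(0)|\leq\max\{C_{1},(c_{\max}-c_{\min})\tau_{0}\}$, whence $\sup_{t\in\R}|X(t)-Y(t)|<\infty$. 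I expect the main obstacle to be the extraction step: one must carefully turn the pointwise trapping, which carries the nonvanishing error $\mu_{0}e^{-\om(t-t_{0})}$, into a clean comparison of interface locations with all constants uniform in $t_{0}$. The two features that make this possible are that the propagated shifts $\tilde{\xi}^{\pm}(t)$ never drift more than $D+\tfrac{A\mu_{0}}{\om}$ from $d(t_{0})$, and that the threshold $\tau_{0}$ can be fixed once and for all independently of $t_{0}$.
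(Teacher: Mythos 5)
Your proof is correct, and for $t\geq 0$ it follows the paper's route: trap $v$ at an initial time between two shifts of $u$ with a small additive error, propagate via Corollary \ref{cor-stability}, and read off a comparison of interface locations. Where you genuinely diverge is in how the two halves of the time axis are handled. The paper traps only at the single reference time $t_{0}=0$ and runs the estimate forward to cover $t\geq 0$; for $t\leq 0$ it then resorts to a contradiction argument, extracting a sequence $t_{n}\to-\infty$ along which $Y(t_{n})-X_{\frac12}(t_{n})\to\pm\infty$ and deriving an absurd bound on $Y^{\mp}_{\frac12\mp\mu}(0)$ by letting an auxiliary shift $\xi_{0}\to\pm\infty$. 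You instead observe that the initial trapping can be centered at the signed gap $d(t_{0})=Y(t_{0})-X(t_{0})$ with a margin $D$ that is \emph{uniform in} $t_{0}$ (this is exactly the uniform bounded interface width of both fronts), which upgrades the forward estimate to the two-sided statement $|d(t)-d(t_{0})|\leq C_{1}$ for all $t\geq t_{0}+\tau_{0}$ with $C_{1},\tau_{0}$ independent of $t_{0}$; applying this with $t_{0}=t$ for $t\leq-\tau_{0}$ and target time $0$, and filling the gap $|t|<\tau_{0}$ with the Lipschitz bound from \eqref{interface-eq}, disposes of negative times directly. This is essentially the same uniformity-in-$t_{0}$ device the paper itself exploits later (in Theorem \ref{thm-uniqeness} and Lemma \ref{lem-bounded-interface-width-1}), so your argument is arguably more economical and avoids the contradiction step; the paper's version requires slightly less bookkeeping at time $0$ since it never needs to quantify how far the trapping shifts sit from $d(0)$. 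Two small points to tidy: your constant $R_{u}$ must bound both $|X_{\frac14}(t)-X(t)|$ and $|X_{\frac34}(t)-X(t)|$ if you run the symmetric lower-bound extraction through the level $\frac34$, and the existence of a point $x_{t}$ with $v(t,x_{t})=\frac12$ uses the spatial continuity of $v$, which is supplied by Proposition \ref{prop-regularity-of-tf}.
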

\begin{proof}
Since $\sup_{t\in\R}|X_{\frac{1}{2}}(t)-X(t)|<\infty$, it suffices to show: $\rm(i)$ $\sup_{t\geq0}|Y(t)-X_{\frac{1}{2}}(t)|<\infty$; $\rm(ii)$ $\sup_{t\leq0}|Y(t)-X_{\frac{1}{2}}(t)|<\infty$.

$\rm(i)$ Let $\mu\in(0,\min\{\frac{1}{4},\theta_{0},1-\theta_{1}\})$ be small. We first see that
\begin{equation}\label{initial-estimate-uniqueness-general}
u(0,x-Y_{1-\mu}^{-}(0)+X_{\mu}(0))-\mu\leq v(0,x)\leq u(0,x-Y_{\mu}^{+}(0)+X_{1-\mu}(0))+\mu,\quad x\in\R.
\end{equation}
In fact, if $x\geq Y_{1-\mu}^{-}(0)$, then by the monotonicity of $u(t,x)$ in $x$, we have
\begin{equation*}
u(0,x-Y_{1-\mu}^{-}(0)+X_{\mu}(0))-\mu\leq u(0,X_{\mu}(0))-\mu=0<v(0,x).
\end{equation*}
If $x<Y_{1-\mu}^{-}(0)$, then
$$v(0,x)\geq 1-\mu>u(0,x-Y_{1-\mu}^{-}(0)+X_{\mu}(0))-\mu.$$
This proves the first inequality. The second one is checked similarly.

Setting $\xi^{-}_{0}=Y_{1-\mu}^{-}(0)-X_{\mu}(0)$ and $\xi_{0}^{+}=Y_{\mu}^{+}(0)-X_{1-\mu}(0)$ in \eqref{initial-estimate-uniqueness-general}, and then, applying Corollary \ref{cor-stability} to \eqref{initial-estimate-uniqueness-general}, we find
\begin{equation}\label{estimate-uniqueness-general}
u(t,x-\xi^{-})-\mu\leq v(t,x)\leq u(t,x-\xi^{+})+\mu,\quad x\in\R
\end{equation}
for all $t\geq0$, where $\xi^{\pm}=\xi^{\pm}_{0}\pm\frac{A\mu}{\om}$.
It then follows from the first inequality in \eqref{estimate-uniqueness-general} and the monotonicity of $u(t,x)$ in $x$ that
\begin{equation*}
\frac{1}{2}-\mu=u(t,X_{\frac{1}{2}}(t))-\mu<u(t,x-\xi^{-})-\mu\leq v(t,x)\quad\text{for all}\quad x<\xi^{-}+X_{\frac{1}{2}}(t),
\end{equation*}
which implies that $\xi^{-}+X_{\frac{1}{2}}(t)\leq Y^{-}_{\frac{1}{2}-\mu}(t)$ for $t\geq0$. Similarly, the second inequality in \eqref{estimate-uniqueness-general} and the monotonicity of $u(t,x)$ in $x$ implies that
\begin{equation*}
v(t,x)\leq u(t,x-\xi^{+})+\mu<u(t,X_{\frac{1}{2}}(t))+\mu=\frac{1}{2}+\mu\quad\text{for all}\quad x>\xi^{+}+X_{\frac{1}{2}}(t),
\end{equation*}
which leads to $Y_{\frac{1}{2}+\mu}^{+}(t)\leq\xi^{+}+X_{\frac{1}{2}}(t)$ for $t\geq0$. Since $\sup_{t\in\R}|Y^{-}_{\frac{1}{2}-\mu}(t)-Y(t)|<\infty$ and $\sup_{t\in\R}|Y(t)-Y_{\frac{1}{2}+\mu}^{+}(t)|<\infty$ by Lemma \ref{lem-bounded-interface-width}, we conclude that $\sup_{t\geq0}|X_{\frac{1}{2}}(t)-Y(t)|<\infty$.

$\rm(ii)$ Suppose on the contrary that $\sup_{t\leq0}|Y(t)-X_{\frac{1}{2}}(t)|=\infty$. Since both $Y(t)$ and $X_{\frac{1}{2}}(t)$ are continuous, there exists a sequence $t_{n}\to-\infty$ as $n\to\infty$ such that either $Y(t_{n})-X_{\frac{1}{2}}(t_{n})\to\infty$ or $Y(t_{n})-X_{\frac{1}{2}}(t_{n})\to-\infty$ as $n\to\infty$.

Suppose first that $Y(t_{n})-X_{\frac{1}{2}}(t_{n})\to\infty$ as $n\to\infty$. Since $\sup_{t\in\R}|Y(t)-Y^{-}_{\frac{1}{2}}(t)|<\infty$, we in particular have $Y^{-}_{\frac{1}{2}}(t_{n})-X_{\frac{1}{2}}(t_{n})\to\infty$ as $n\to\infty$. Then, for any $\mu>0$ and $\xi_{0}\in\R$, we can find an $N=N(\mu,\xi_{0})>0$ such that $t_{N}<0$ and $u(t_{N},x-\xi_{0})-\mu\leq v(t_{N},x)$ for $x\in\R$. We then apply Corollary \ref{cor-stability} to conclude that
\begin{equation*}
u(t,x-\xi_{0}+\frac{A\mu}{\om})-\mu\leq v(t,x),\quad x\in\R,\quad t\geq t_{N}.
\end{equation*}
Then, setting $t=0$ in the above estimate, we find from the monotonicity of $u(t,x)$ in $x$ that
\begin{equation*}
\frac{1}{2}-\mu=u(0,X_{\frac{1}{2}}(0))-\mu<u(0,x-\xi_{0}+\frac{A\mu}{\om})-\mu\leq v(0,x),\quad \forall x<\xi_{0}-\frac{A\mu}{\om}+X_{\frac{1}{2}}(0),
\end{equation*}
which implies that $\xi_{0}-\frac{A\mu}{\om}+X_{\frac{1}{2}}(0)\leq Y_{\frac{1}{2}-\mu}^{-}(0)$. Letting $\xi_{0}\to\infty$, we arrive at a contradiction.

Now, suppose $Y(t_{n})-X_{\frac{1}{2}}(t_{n})\to-\infty$ as $n\to\infty$, which implies $Y^{+}_{\frac{1}{2}}(t_{n})-X_{\frac{1}{2}}(t_{n})\to-\infty$ as $n\to\infty$. Then, for any $\mu>0$ and $\xi_{0}\in\R$, we can find some $N=N(\mu,\xi_{0})>0$ such that $t_{N}<0$ and $v(t_{N},x)\leq u(t_{N},x-\xi_{0})+\mu$ for $x\in\R$. Applying Corollary \ref{cor-stability}, we find
\begin{equation*}
v(t,x)\leq u(t,x-\xi_{0}-\frac{A\mu}{\om})+\mu,\quad x\in\R,\quad t\geq t_{N}.
\end{equation*}
Setting $t=0$ in the above estimate, we find
\begin{equation*}
v(0,x)\leq u(0,x-\xi_{0}-\frac{A\mu}{\om})+\mu<u(0,X_{\frac{1}{2}}(0))+\mu=\frac{1}{2}+\mu,\quad\forall x>\xi_{0}+\frac{A\mu}{\om}+X_{\frac{1}{2}}(0),
\end{equation*}
which implies that $Y_{\frac{1}{2}+\mu}^{+}(0)\leq\xi_{0}+\frac{A\mu}{\om}+X_{\frac{1}{2}}(0)$. This leads to a contradiction if we let $\xi_{0}\to-\infty$. Hence, we have $\sup_{t\leq0}|Y(t)-X_{\frac{1}{2}}(t)|<\infty$. This completes the proof.
\end{proof}

Now, we prove Theorem \ref{thm-uniqeness}.

\begin{proof}[Proof of Theorem \ref{thm-uniqeness}]
Let $\theta_{3}\in(0,\min\{\theta_{0},1-\theta_{1}\})$. For $t_{0}\in\R$, we define
\begin{equation*}
\begin{split}
u^{-}(t_{0},x)&=u(t_{0},x-Y^{-}_{1-\theta_{3}}(t_{0})+X_{\theta_{3}}(t_{0}))-\theta_{3},\\
u^{+}(t_{0},x)&=u(t_{0},x-Y^{+}_{\theta_{3}}(t_{0})+X_{1-\theta_{3}}(t_{0}))+\theta_{3}.
\end{split}
\end{equation*}
We claim
\begin{equation*}
u^{-}(t_{0},x)\leq v(t_{0},x)\leq u^{+}(t_{0},x),\quad x\in\R.
\end{equation*}
In fact, if $x\geq Y^{-}_{1-\theta_{3}}(t_{0})$, then by monotonicity,
$$u^{-}(t_{0},x)\leq u(t_{0},X_{\theta_{3}}(t_{0}))-\theta_{3}=0<v(t_{0},x).$$
If $x<Y^{-}_{1-\theta_{3}}(t_{0})$, then by the definition of $Y^{-}_{1-\theta_{3}}(t_{0})$,
$$v(t_{0},x)>1-\theta_{3}>u^{-}(t_{0},x).$$
Hence, $u^{-}(t_{0},x)\leq v(t_{0},x)$. The inequality $v(t_{0},x)\leq u^{+}(t_{0},x)$ is checked similarly.

By Lemma \ref{lem-bounded-interface-width} and Lemma \ref{lem-bounded-interface-width-uniqueness-general}, we have
\begin{equation*}
L:=\max\bigg\{\sup_{t_{0}\in\R}|Y^{-}_{1-\theta_{3}}(t_{0})-X_{\theta_{3}}(t_{0})|,\sup_{t_{0}\in\R}|Y^{+}_{\theta_{3}}(t_{0})-X_{1-\theta_{3}}(t_{0})|\bigg\}<\infty.
\end{equation*}
Then, shifting $u^{-}(t_{0},x)$ to the left and $u^{+}(t_{0},x)$ to the right, we conclude from the monotonicity of $u(t,x)$ in $x$ that for all $t_{0}\in\R$, there holds
\begin{equation}\label{modified-intial-estimate}
u(t_{0},x+L)-\theta_{3}\leq u^{-}(t_{0},x)\leq v(t_{0},x)\leq u^{+}(t_{0},x)\leq u(t_{0},x-L)+\theta_{3}.
\end{equation}
That is, we are in the position to apply Theorem \ref{thm-stability}. So, we apply Theorem \ref{thm-stability} to \eqref{modified-intial-estimate} to conclude that there exist $t_{0}$-independent constants $C>0$ and $\om_{*}>0$, and a family of shifts $\{\xi_{t_{0}}\}_{t_{0}\in\R}\subset\R$ satisfying $\sup_{t_{0}\in\R}|\xi_{t_{0}}|<\infty$ such that
\begin{equation*}
\sup_{x\in\R}|v(t,x)-u(t,x-\xi_{t_{0}})|\leq Ce^{-\om_{*}(t-t_{0})}
\end{equation*}
for all $t\geq t_{0}$. We now pass to the limit $t_{0}\to-\infty$ along some subsequence to conclude $\xi_{t_{0}}\to\xi$ for some $\xi\in\R$, and then conclude that $v(t,x)=u(t,x-\xi)$ for all $(t,x)\in\R\times\R$. This completes the proof.
\end{proof}

\section{Periodicity and asymptotic speeds of  transition fronts}\label{sec-ptw}

In this section, we study the periodicity of transition fronts of  \eqref{main-eqn} under the additional time periodic assumption
on $f$, that is, there exists $T>0$ such that $f(t+T,u)=f(t,u)$ for all $t\in\R$ and $u\in[0,1]$. We also study asymptotic speeds of transition fronts of \eqref{main-eqn} under the additional {\it uniquely ergodic} assumption on $f$, that is,  the dynamical system $\{\si_{t}\}_{t\in\R}$ defined by
\begin{equation}\label{DS-shift-1}
\si_{t}: H(f)\ra H(f),\quad f\mapsto f(\cdot+t,\cdot)
\end{equation}
is compact (i.e., $H(f)$ is compact and metrizable) and uniquely ergodic, that is, $\{\si_{t}\}_{t\in\R}$ admits one and only one invariant measure, where
\begin{equation*}
H(f)=\overline{\{f(\cdot+t,\cdot):t\in\R\}}
\end{equation*}
with the closure taken under the open-compact topology (which is equivalent to locally uniform convergence in our case). Throughout this section, we assume (H1)-(H3).

Let $u(t,x)$ be a space non-increasing transition front of \eqref{main-eqn} with interface $X(t)$. The main results of this section are stated in the following theorem.

\begin{thm}\label{thm-asymptotic-speeds}
\begin{itemize}
\item[\rm(i)] Assume that $f(t,u)$ is $T$-periodic in $t$. Then,
$u(t,x)$ is a $T$-periodic traveling wave, that is, there are a constant $c>0$ and a function $\psi:\R\times\R\ra(0,1)$ satisfying
\begin{equation}\label{eqn-periodic-tw}
\begin{cases}
\psi_{t}=J\ast\psi-\psi+c\psi_{x}+f(t,\psi),\cr
\lim_{x\ra-\infty}\psi(t,x)=1,\,\,\lim_{x\ra\infty}\psi(t,x)=0\,\,\text{uniformly in}\,\,t\in\R,\cr
\psi(t,\cdot)=\psi(t+T,\cdot)\,\,\text{for all}\,\,t\in\R
\end{cases}
\end{equation}
such that $u(t,x)=\psi(t,x-ct)$ for all $(t,x)\in\R\times\R$.

\item[\rm(ii)] Assume that $f(t,u)$ is uniquely ergodic in $t$, and, in addition, twice continuously differentiable with
\begin{equation*}
\sup_{(t,u)\in\R\times[-1,2]}\big(|f_{tt}(t,u)|+|f_{tu}(t,u)|+|f_{uu}(t,u)|\big)<\infty.
\end{equation*}
Then, the asymptotic speeds $\lim_{t\to\pm\infty}\frac{X(t)}{t}$ exist.
\end{itemize}
\end{thm}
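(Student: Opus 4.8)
The plan is to reduce periodicity to the uniqueness theorem. Since $f$ is $T$-periodic in $t$, the time-shifted function $u(t+T,x)$ is again a global solution of \eqref{main-eqn}, and it is clearly a space non-increasing transition front with interface $X(t+T)$. Applying Theorem \ref{thm-uniqeness} with $u(t,x)$ itself as the reference non-increasing front yields a shift $\xi\in\R$ with $u(t+T,x)=u(t,x+\xi)$ for all $(t,x)$. Reading this at the $\tfrac12$-level set shows $X_{1/2}(t+T)=X_{1/2}(t)-\xi$, so the increment $-\xi$ is independent of $t$; since $\dot X\ge c_{\min}>0$ by \eqref{interface-eq} forces $X_{1/2}(t)\to+\infty$, iterating $X_{1/2}(t+nT)=X_{1/2}(t)-n\xi$ rules out $\xi\ge0$, giving $\xi=-cT$ with $c:=-\xi/T>0$. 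I would then set $\psi(t,x):=u(t,x+ct)$ and verify directly from $u(t+T,x)=u(t,x-cT)$ that $\psi(t+T,x)=\psi(t,x)$, that $u(t,x)=\psi(t,x-ct)$, and that substituting $u(t,x)=\psi(t,x-ct)$ into \eqref{main-eqn} produces exactly the profile equation in \eqref{eqn-periodic-tw}. The boundary limits for $\psi$ are inherited from the transition front, and the regularity of $\psi$ needed to make the profile equation classical comes from Proposition \ref{prop-regularity-of-tf} together with the equation itself. This part is essentially routine once uniqueness is in hand.

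\textbf{Part (ii): asymptotic speeds.} Here the strategy is to realize the interface speed as a time average of a continuous observable over the hull and then invoke unique ergodicity. First I would check that (H1)-(H3) persist for every $g\in H(f)$: the time-independent bounds $f_{B}\le f\le f_{\tilde B}$ and the sign conditions of (H3) are preserved under time-translation and locally uniform limits. For $g=\si_{\tau}f$ the shifted front $u(t+\tau,x)$ is a space non-increasing transition front of $u_{t}=J\ast u-u+g(t,u)$; passing to locally uniform limits along $\si_{\tau_{n}}f\to g$ (using the uniform-in-$t$ interface-width bound \eqref{bounded-interface-width-defn} of the original front, the uniform steepness of Theorem \ref{thm-uniform-steepness}, and the regularity of Proposition \ref{prop-regularity-of-tf}) produces a space non-increasing front for every $g\in H(f)$, which by Theorem \ref{thm-uniqeness} is unique up to a space shift. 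Normalizing it by placing its $\tfrac12$-level at the origin at time $0$ defines a profile $U^{g}$, and, using Corollary \ref{cor-diff-interface-mono} and the equation, I set
\[
\Phi(g):=\dot X^{U^{g}}_{1/2}(0)=-\frac{[J\ast U^{g}(0,\cdot)](0)-\tfrac12+g(0,\tfrac12)}{U^{g}_{x}(0,0)}.
\]

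Next I would establish the key identity $\dot X_{1/2}(\tau)=\Phi(\si_{\tau}f)$: the front $u(\cdot+\tau,\cdot)$ coincides with $U^{\si_{\tau}f}$ up to the space shift $X_{1/2}(\tau)$ by uniqueness, and differentiating this relation at $t=0$ gives the claim. Consequently
\[
\frac{X_{1/2}(t)}{t}=\frac{X_{1/2}(0)}{t}+\frac1t\int_{0}^{t}\Phi(\si_{s}f)\,ds,
\]
and unique ergodicity of $\{\si_{t}\}_{t\in\R}$ forces the Birkhoff averages $\frac1t\int_{0}^{t}\Phi(\si_{s}f)\,ds$ to converge, as $t\to\pm\infty$, to the common space average $\int_{H(f)}\Phi\,d\mu$; hence $\lim_{t\to\pm\infty}X_{1/2}(t)/t$ exists. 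Since $\sup_{t\in\R}|X(t)-X_{1/2}(t)|<\infty$ by Lemma \ref{lem-bounded-interface-width}, the same limits hold for $X(t)/t$.

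\textbf{Main obstacle.} The crux is the continuity of $\Phi$ on $H(f)$, which is precisely what the classical unique-ergodicity criterion (uniform convergence of time averages for continuous observables) requires. Continuity of the numerator follows from continuity of $g\mapsto U^{g}(0,\cdot)$ in a locally uniform sense, obtained from the uniqueness and stability machinery of Theorem \ref{thm-uniqeness} and Theorem \ref{thm-stability}; but continuity of the denominator $U^{g}_{x}(0,0)$ demands $C^{1}$-in-$x$ convergence of the profiles together with a uniform negative upper bound $U^{g}_{x}(0,0)\le C_{\mathrm{steep}}<0$ supplied by Theorem \ref{thm-uniform-steepness}. This is where the extra second-order regularity assumed in (ii) enters: it upgrades the available space regularity enough to pass derivatives to the limit and to keep $\Phi$ genuinely continuous rather than merely bounded and measurable.
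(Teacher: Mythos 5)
Your proposal is correct and follows essentially the same route as the paper: part (i) via Theorem \ref{thm-uniqeness} applied to $u(t+T,x)$ with the sign of the shift fixed by the rightward motion of $X(t)$, and part (ii) via the formula $\dot X_{1/2}(\tau)=\Phi(\si_\tau f)$ for a continuous observable on the hull together with unique ergodicity. The only (immaterial) difference is that you pull the observable back to $H(f)$ directly, whereas the paper pushes the flow forward to an auxiliary phase space $\tilde H$ via a homeomorphism $\Om$; the continuity issue you flag for $U^g_x(0,0)$ is exactly the point the paper's proof of continuity of $\Om$ must (and does) address.
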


To prove Theorem \ref{thm-asymptotic-speeds}, let us first do some preparation. Note that if $f$ is periodic in $t$, then it is uniquely ergodic. In the rest of this section, we assume that $f(t,u)$ satisfies the assumptions in Theorem \ref{thm-asymptotic-speeds}(ii).

Observe that any $g\in H(f)$ satisfies (H2)-(H3) due to the regularity assumptions on $f(t,u)$. For any $g\in H(f)$, there is $t_n\to\infty$ such that $f(t+t_n,u)\to g(t,u)$ as $n\to\infty$ in open-compact topology.  By the regularity, without loss of generality, we may assume that there is $u^g(t,x)$ such that $u(t+t_n,x+X(t_n))\to  u^g(t,x)$ as $n\to\infty$ in open compact topology. It is not difficult to see that $u^g(t,x)$ is a space non-increasing transition front of
 \begin{equation}\label{main-eqn-g}
u_{t}=J\ast u-u+g(t,u).
\end{equation}
We may assume that $u^{g}(t,x)$ is the unique transition front of
\eqref{main-eqn-g} satisfying the normalization $X^{g}_{\frac{1}{2}}(0)=0$, where $X^{g}_{\frac{1}{2}}(t)$ is the interface location function of $u^{g}(t,x)$ at $\frac{1}{2}$, i.e., $u^{g}(t,X_{\frac{1}{2}}^{g}(t))=\frac{1}{2}$ for all $t\in\R$.

Let
\begin{equation}\label{profiles-asympto-spd-19318391}
\psi^{g}(t,x)=u^{g}(t,x+X^{g}_{\frac{1}{2}}(t)),\quad\forall (t,x)\in\R\times\R
\end{equation}
be the profile function of $u^{g}(t,x)$. Then, $\psi^{g}(t,0)=\frac{1}{2}$ for all $t\in\R$.

We prove

\begin{lem}\label{lem-asymptotic-speed}
There hold the following statements:
\begin{itemize}
\item[\rm(i)] for any $g\in H(f)$, there holds
$$\psi^{g}(t+\tau,x)=\psi^{g\cdot\tau}(t,x),\quad\forall(t,\tau,x)\in\R\times\R\times\R,$$
where $g\cdot\tau=g(\cdot+\tau,\cdot)$;

\item[\rm(ii)] there holds $\sup_{(t,\tau)\in\R\times\R}|\dot{X}_{\frac{1}{2}}^{f\cdot\tau}(t)|<\infty$;

\item[\rm(iii)] the limits
$$\lim_{x\to-\infty}\psi^{g}(t,x)=1\quad\text{and}\quad\lim_{x\to\infty}\psi^{g}(t,x)=0$$
are uniformly in $t\in\R$ and $g\in H(f)$;

\item[\rm(iv)] there holds $\sup_{g\in H(f)}\sup_{t\in\R}|\dot{X}^{g}_{\frac{1}{2}}(t)|<\infty$.
\end{itemize}
\end{lem}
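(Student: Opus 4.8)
The plan is to prove the four assertions in order, bootstrapping everything from the single front $u=u^{f}$ and its established properties (Lemma \ref{lem-bounded-interface-width}, Corollary \ref{cor-diff-interface-mono}, Theorem \ref{thm-uniform-steepness}, Definition \ref{defn-tf}), together with the uniqueness theorem (Theorem \ref{thm-uniqeness}) and the compactness of $H(f)$. For (i) I would invoke uniqueness directly. Fix $g\in H(f)$ and $\tau\in\R$ and set $v(t,x):=u^{g}(t+\tau,x)$. Since $g(t+\tau,\cdot)=(g\cdot\tau)(t,\cdot)$, substitution shows $v$ is a space non-increasing transition front of \eqref{main-eqn-g} with $g$ replaced by $g\cdot\tau$, so by Theorem \ref{thm-uniqeness} there is a constant shift $\xi=\xi(g,\tau)$ with $v(t,x)=u^{g\cdot\tau}(t,x+\xi)$. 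Reading the $\tfrac12$-interfaces off the two descriptions of $v$ gives $X^{v}_{1/2}(t)=X^{g}_{1/2}(t+\tau)$ and $X^{v}_{1/2}(t)=X^{g\cdot\tau}_{1/2}(t)-\xi$; substituting into the profile $\psi^{v}(t,x)=v(t,x+X^{v}_{1/2}(t))$ makes $\xi$ cancel and yields $\psi^{g}(t+\tau,x)=\psi^{v}(t,x)=\psi^{g\cdot\tau}(t,x)$, which is (i).

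Statement (ii) is the infinitesimal form of this computation specialized to $g=f$: the two interface identities give $\dot X^{f\cdot\tau}_{1/2}(t)=\dot X^{v}_{1/2}(t)=\dot X^{f}_{1/2}(t+\tau)$, whence $\sup_{(t,\tau)}|\dot X^{f\cdot\tau}_{1/2}(t)|=\sup_{s}|\dot X^{f}_{1/2}(s)|$, which is finite by Corollary \ref{cor-diff-interface-mono}. For (iii) I would transfer the uniform bounded interface width of $u$ to the entire hull. By Definition \ref{defn-tf} the limits for $u$ are uniform in $t$, so by Lemma \ref{lem-bounded-interface-width} and monotonicity, for each $\ep\in(0,\tfrac12)$ there is $R_{\ep}>0$ depending only on $u$ and $\ep$ with $u(s,x)>1-\ep$ for $x\le X_{1/2}(s)-R_{\ep}$ and $u(s,x)<\ep$ for $x\ge X_{1/2}(s)+R_{\ep}$, uniformly in $s\in\R$. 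Every $u^{g}$ arises as a locally uniform limit $u(t+t_{n},x+X(t_{n}))\to u^{g}(t,x)$, and every time-translate of $u$ satisfies the same $R_{\ep}$-estimate; hence the estimate passes to $u^{g}$ with the \emph{same} $R_{\ep}$, and rewriting it through $\psi^{g}(t,x)=u^{g}(t,x+X^{g}_{1/2}(t))$ in \eqref{profiles-asympto-spd-19318391} gives $\psi^{g}(t,x)>1-\ep$ for $x\le -R_{\ep}$ and $\psi^{g}(t,x)<\ep$ for $x\ge R_{\ep}$, uniformly in $t$ and $g$, which is (iii).

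Finally, (iv) would follow from (ii) by compactness. Any $g\in H(f)$ is a limit $g=\lim_{n}f\cdot\tau_{n}$, and by construction $u^{f\cdot\tau_{n}}\to u^{g}$ locally uniformly with all fronts normalized by $X_{1/2}(0)=0$. By (ii) the functions $X^{f\cdot\tau_{n}}_{1/2}$ are uniformly Lipschitz with constant $B:=\sup_{s}|\dot X^{f}_{1/2}(s)|$ and vanish at $t=0$, so Arzel\`a--Ascoli yields a subsequence converging locally uniformly to a $B$-Lipschitz function. Using the convergence of the fronts together with strict monotonicity of $u^{g}$ in $x$ (so the $\tfrac12$-crossings converge and, by (iii), remain in a fixed bounded set), this limit must be $X^{g}_{1/2}$. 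Since $X^{g}_{1/2}$ is continuously differentiable by Corollary \ref{cor-diff-interface-mono}, the inherited Lipschitz bound gives $|\dot X^{g}_{1/2}(t)|\le B$ for all $t$, with $B$ independent of $g$.

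I expect the main obstacle to be the uniform-in-$g$ transfer in (iii): one must verify that the bounded-width constant $R_{\ep}$ does not degrade along the limiting family and that the $\tfrac12$-interfaces converge rather than drifting to infinity, since $H(f)$ is reached only as a closure and the fronts $u^{g}$ are defined through limits of translates. Once this uniformity is in hand, (iv) is a soft Arzel\`a--Ascoli argument, while (i) and (ii) are immediate consequences of the uniqueness theorem and Corollary \ref{cor-diff-interface-mono}.
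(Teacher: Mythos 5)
Your proposal is correct and follows essentially the same route as the paper: (i) via the uniqueness theorem (Theorem \ref{thm-uniqeness}) applied to the two descriptions of $u^{g}(t+\tau,x)$, (ii) from the resulting translation identity for the interfaces, and (iii)--(iv) by passing uniform estimates and uniformly Lipschitz interface functions to the limit along the defining sequences, with the identification of the limit handled by uniqueness and normalization. The only cosmetic difference is in (ii), where you deduce $\dot{X}^{f\cdot\tau}_{\frac{1}{2}}(t)=\dot{X}^{f}_{\frac{1}{2}}(t+\tau)$ and invoke Corollary \ref{cor-diff-interface-mono}, while the paper writes out the quotient formula for $\dot{X}^{f\cdot\tau}_{\frac{1}{2}}$ and bounds it using Theorem \ref{thm-uniform-steepness} directly --- the same uniform steepness underlies both versions.
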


We remark that $\rm(ii)$ is a special case of $\rm(iv)$, but it plays an important role in proving the lemma, so we state it explicitly.

\begin{proof}[Proof of Lemma \ref{lem-asymptotic-speed}]
For simplicity, we write $X^{g}(t)=X^{g}_{\frac{1}{2}}(t)$. Therefore, $u^{g}(t,X^{g}(t))=\frac{1}{2}$ and $X^{g}(0)=0$.

$\rm(i)$ Fix any $\tau\in\R$. We see that both
\begin{equation*}
u_{1}(t,x)=\psi^{g\cdot\tau}(t,x-X^{g\cdot\tau}(t))\quad\text{and}\quad u_{2}(t,x)=\psi^{g}(t+\tau,x-X^{g}(t+\tau))
\end{equation*}
are transition fronts of $u_{t}=J\ast u-u+g(t+\tau,x)$. Then, by uniqueness, i.e., Theorem \ref{thm-uniqeness}, there exists $\xi\in\R$ such that $u_{1}(t,x)=u_{2}(t,x+\xi)$. Moreover, since
\begin{equation*}
u_{1}(t,X^{g\cdot \tau}(t))=\psi^{g\cdot\tau}(t,0)=\frac{1}{2}\quad\text{and}\quad u_{2}(t,X^{g}(t+\tau))=\psi^{g}(t+\tau,0)=\frac{1}{2},
\end{equation*}
we find
$$u_{1}(t,X^{g}(t+\tau)-\xi)=u_{2}(t,X^{g}(t+\tau))=\frac{1}{2},$$
and hence, $X^{g\cdot \tau}(t)=X^{g}(t+\tau)-\xi$ by monotonicity. It then follows that
\begin{equation*}
\begin{split}
\psi^{g\cdot\tau}(t,x)&=u_{1}(t,x+X^{g\cdot\tau}(t))=u_{2}(t,x+X^{g\cdot\tau}(t)+\xi)\\
&=u_{2}(t,x+X^{g}(t+\tau))=\psi^{g}(t+\tau,x).
\end{split}
\end{equation*}

$\rm(ii)$ By $\rm(i)$, we in particular have
\begin{equation}\label{important-equality-f}
\psi^{f\cdot\tau}(t,x)=\psi^{f}(t+\tau,x),\quad\forall(t,\tau,x)\in\R\times\R\times\R.
\end{equation}
Since the limits $\psi^{f}(t,x)\to1$ as $x\to-\infty$ and $\psi^{f}(t,x)\to0$ as $x\to\infty$ are uniform in $t\in\R$, we find
\begin{equation}\label{uniform-limits-f-infinity}
\lim_{x\to-\infty}\psi^{f\cdot\tau}(t,x)=1\quad\text{and}\quad\lim_{x\to\infty}\psi^{f\cdot\tau}(t,x)=0\quad\text{uniformly in}\quad(t,\tau)\in\R\times\R.
\end{equation}

From \eqref{important-equality-f}, we also have
\begin{equation}\label{important-equality-f-u}
u^{f\cdot\tau}(t,x+X^{f\cdot\tau}(t))=u^{f}(t+\tau,x+X^{f}(t+\tau)),\quad\forall(t,\tau,x)\in\R\times\R\times\R.
\end{equation}
Setting $x=0$ and differentiating the resulting equality with respect to $t$, we find
\begin{equation*}
\begin{split}
\dot{X}^{f\cdot\tau}(t)&=\frac{\frac{d}{dt}[u^{f}(t+\tau,X^{f}(t+\tau))]-u^{f\cdot\tau}_{t}(t,X^{f\cdot\tau}(t))}{u^{f\cdot\tau}_{x}(t,X^{f\cdot\tau}(t))}\\
&=\frac{\frac{d}{dt}[u^{f}(t+\tau,X^{f}(t+\tau))]-u^{f\cdot\tau}_{t}(t,X^{f\cdot\tau}(t))}{u^{f}_{x}(t+\tau,X^{f}(t+\tau))},
\end{split}
\end{equation*}
where we used $u^{f\cdot\tau}_{x}(t,X^{f\cdot\tau}(t))=u^{f}_{x}(t+\tau,X^{f}(t+\tau))$, which comes from \eqref{important-equality-f-u}. We see that both $\frac{d}{dt}[u^{f}(t+\tau,X^{f}(t+\tau))]$ and $u^{f\cdot\tau}_{t}(t,X^{f\cdot\tau}(t))$ are bounded uniformly in $(t,\tau)\in\R\times\R$. Moreover, $u^{f}_{x}(t+\tau,X^{f}(t+\tau))$ is bounded uniformly in $(t,\tau)\in\R\times\R$ due to the uniform steepness, i.e., Lemma \ref{thm-uniform-steepness}. It then follows that $\sup_{(t,\tau)\in\R\times\R}|\dot{X}^{f\cdot\tau}(t)|<\infty$.

$\rm(iii)$ For any $g\in H(f)$, there is a sequence $\{t_{n}\}$ such that $g_{n}:=f\cdot t_{n}\to g$ in $H(f)$. Trivially, $\sup_{n}\sup_{(t,x)\in\R\times\R}|u^{g_{n}}_{t}(t,x)|<\infty$, and by $\rm(i)$, $\sup_{n}\sup_{(t,x)\in\R\times\R}|u^{g_{n}}_{x}(t,x)|<\infty$. It then follows from $\rm(ii)$ that
\begin{equation*}
\begin{split}
\sup_{n}\sup_{(t,x)\in\R\times\R}|\psi^{g_{n}}_{t}(t,x)|&=\sup_{n}\sup_{(t,x)\in\R\times\R}|u_{t}^{g_{n}}(t,x+X^{g_{n}}(t))+\dot{X}^{g_{n}}(t)u_{x}^{g_{n}}(t,x+X^{g_{n}}(t))|\\
&\leq\sup_{n}\sup_{(t,x)\in\R\times\R}|u_{t}^{g_{n}}(t,x+X^{g_{n}}(t))|\\
&\quad+\sup_{(t,\tau)\in\R\times\R}|\dot{X}^{f\cdot\tau}(t)|\sup_{n}\sup_{(t,x)\in\R\times\R}|u_{x}^{g_{n}}(t,x+X^{g_{n}}(t))|<\infty,\\
\sup_{n}\sup_{(t,x)\in\R\times\R}|\psi^{g_{n}}_{x}(t,x)|
&=\sup_{n}\sup_{(t,x)\in\R\times\R}|u_{x}^{g_{n}}(t,x+X^{g_{n}}(t))|<\infty.
\end{split}
\end{equation*}
In particular, by Arzel\`{a}-Ascoli theorem, there exists a continuous function $\psi(\cdot,\cdot;g):\R\times\R\to[0,1]$ such that $\lim_{n\to\infty}\psi^{g_{n}}(t,x)=\psi(t,x;g)$ locally uniformly in $(t,x)\in\R\times\R$. We then conclude from \eqref{uniform-limits-f-infinity} that
\begin{equation}\label{unfirm-limit-13579}
\lim_{x\to-\infty}\psi(t,x;g)=1\quad\text{and}\quad\lim_{x\to\infty}\psi(t,x;g)=0\quad\text{uniformly in}\,\,t\in\R\,\,\text{and}\,\,g\in H(f).
\end{equation}

It remains to show $\psi^{g}(t,x)=\psi(t,x;g)$. Fix any $g\in H(f)$. By $\rm(ii)$, there exists a continuous function $X(\cdot;g):\R\to\R$ such that, up to a subsequence,
\begin{equation}\label{convergence-13579}
\begin{split}
X^{g_{n}}(t)\to X(t;g)\quad\text{and}\quad\psi^{g_{n}}(t,x-X^{g_{n}}(t))\to\psi(t,x-X(t;g);g)
\end{split}
\end{equation}
as $n\to\infty$ locally uniformly in $(t,x)\in\R\times\R$. Since, trivially,
\begin{equation*}
\begin{split}
\sup_{n}\sup_{(t,x)\in\R\times\R}\bigg|\frac{d}{dt}\psi^{g_{n}}(t,x-X^{g_{n}}(t))\bigg|&=\sup_{n}\sup_{(t,x)\in\R\times\R}|u_{t}^{g_{n}}(t,x)|<\infty,\\
\sup_{n}\sup_{(t,x)\in\R\times\R}\bigg|\frac{d^{2}}{dt^{2}}\psi^{g_{n}}(t,x-X^{g_{n}}(t))\bigg|&=\sup_{n}\sup_{(t,x)\in\R\times\R}|u_{tt}^{g_{n}}(t,x)|<\infty,
\end{split}
\end{equation*}
we will also have
\begin{equation}\label{convergence-13579-time-derivative}
\frac{d}{dt}\psi^{g_{n}}(t,x-X^{g_{n}}(t))\to\frac{d}{dt}\psi(t,x-X(t;g);g)
\end{equation}
as $n\to\infty$ locally uniformly in $(t,x)\in\R\times\R$. Thus, $\psi(t,x-X(t;g);g)$ is a global-in-time solution of \eqref{main-eqn-g}, and hence, it is a transition front due to \eqref{unfirm-limit-13579}. Uniqueness of transition fronts and the normalization $X^{g_{n}}(0)=0$ then imply that $\psi^{g}(t,x)=\psi(t,x;g)$.

$\rm(iv)$ It is a simple consequence of $\rm(ii)$ and the proof of $\rm(iii)$.
\end{proof}

Now, we prove Theorem \ref{thm-asymptotic-speeds}.

\begin{proof}[Proof of Theorem \ref{thm-asymptotic-speeds}]
(i) By periodicity, $u(t+T,x)$ is also a transition front of \eqref{main-eqn}. Theorem \ref{thm-uniqeness} then yields the existence of some $\xi\in\R$ such that
\begin{equation}\label{pre-periodicity}
u(t+T,x)=u(t,x+\xi),\quad\forall(t,x)\in\R\times\R.
\end{equation}

Fix some $\theta_{*}\in(0,1)$. Setting $t=0$ and $x=X_{\theta_{*}}(T)$ in \eqref{pre-periodicity}, we find
$$\theta_{*}=u(T,X_{\theta_{*}}(T))=u(0,X_{\theta_{*}}(T)+\xi),$$
which leads to $X_{\theta_{*}}(0)=X_{\theta_{*}}(T)+\xi$ by monotonicity. It then follows from \eqref{pre-periodicity} that
$$u(t+T,x)=u(t,x+X_{\theta_{*}}(0)-X_{\theta_{*}}(T)),\quad\forall(t,x)\in\R\times\R.$$

Setting $c=\frac{X_{\theta_{*}}(T)-X_{\theta_{*}}(0)}{T}$ and $\psi(t,x)=u(t,x+ct)$ for $(t,x)\in\R\times\R$, we readily verify that $(c,\psi)$ satisfies \eqref{eqn-periodic-tw}. The fact that $c>0$ follows from the fact $u(t,x)$ moves to the right.

\medskip

(ii) Write $X^{g}(t)=X^{g}_{\frac{1}{2}}(t)$. Since $\sup_{t\in\R}|X^{f}(t)-X(t)|<\infty$, it suffices to show the existence of the limits $\lim_{t\to\pm\infty}\frac{X^{f}(t)}{t}$. Since
\begin{equation*}
\lim_{t\to\pm\infty}\frac{X^{f}(t)}{t}=\lim_{t\to\pm\infty}\frac{X^{f}(t)-X^{f}(0)}{t}=\lim_{t\to\pm\infty}\frac{1}{t}\int_{0}^{t}\dot{X}^{f}(s)ds,
\end{equation*}
we only need to show the dynamical system (i.e., the shift operators) generated by $\dot{X}^{f}(t)$ is compact and uniquely ergodic.

To this end, we first derive a formula for $\dot{X}^{g}(t)$. We claim
\begin{equation}\label{an-equality-13578}
\dot{X}^{g}(t)=-\frac{\int_{\R}J(y)\psi^{g}(t,-y)dy-\frac{1}{2}+g(t,\frac{1}{2})}{\psi^{g}_{x}(t,0)},\quad\forall t\in\R.
\end{equation}
In fact, differentiating $u^{g}(t,X^{g}(t))=\frac{1}{2}$, we find
\begin{equation*}
\dot{X}^{g}(t)=-\frac{u^{g}_{t}(t,X^{g}(t))}{u^{g}_{x}(t,X^{g}(t))}=-\frac{[J\ast u^{g}(t,\cdot)](X^{g}(t))-u^{g}(t,X^{g}(t))+f(t,u^{g}(t,X^{g}(t)))}{u^{g}_{x}(t,X^{g}(t))}.
\end{equation*}
The equality \eqref{an-equality-13578} then follows from $u^{g}(t,x+X^{g}(t))=\psi^{g}(t,x)$ and $u^{g}(t,X^{g}(t))=\frac{1}{2}$. Note that due to $\rm(i)$ in Lemma \ref{lem-asymptotic-speed} and \eqref{an-equality-13578}, there holds $\dot{X}^{g\cdot\tau}(t)=\dot{X}^{g}(t+\tau)$ for all $t,\tau\in\R$.

Next, we define
\begin{itemize}
\item the phase space $\tilde{H}=\{(\psi^{g},\dot{X}^{g})|g\in H(f)\}$;

\item the shift operators $\{\tilde{\si}\}_{t\in\R}$, i.e., the dynamical system on $\tilde{H}$,
\begin{equation*}
\tilde{\si}_{t}:\tilde{H}\ra\tilde{H},\quad (\psi^{g},\dot{X}^{g})\mapsto (\psi^{g\cdot t},\dot{X}^{g\cdot t})=(\psi^{g}(\cdot+t,\cdot),\dot{X}^{g}(\cdot+t));
\end{equation*}

\item an operator $\Om:H(f)\to\tilde{H}$, $g\mapsto(\psi^{g},\dot{X}^{g})$.
\end{itemize}
Clearly,
\begin{equation}\label{connecting-DSs}
\tilde{\si}_{t}\circ\Om=\Om\circ\si_{t},\quad\forall t\in\R,
\end{equation}
where $\{\si_{t}\}_{t\in\R}$ is given in \eqref{DS-shift-1}.

We show that $\Om$ is a homeomorphism. We first claim that $\Om$ is continuous. By \eqref{an-equality-13578}, the continuity of $\Om$ is the case if we can show that if $g_{n}\to g_{*}$ in $H(f)$ as $n\to\infty$, then
\begin{equation}\label{limit-to-show-13579}
\psi^{g_{n}}(t,x)\to\psi^{g_{*}}(t,x)\,\,\text{locally uniform in}\,\,t\in\R\,\,\text{and}\,\,\text{uniformly in}\,\,x\in\R
\end{equation}
as $n\to\infty$. To see this, let $g_{n}\to g_{*}$ in $H(f)$ as $n\to\infty$, then as in the proof of $\rm(iii)$ in Lemma \ref{lem-asymptotic-speed}, there exist continuous functions $X^{*}:\R\to\R$ and $\psi^{*}:\R\times\R\to[0,1]$ such that
\begin{equation*}
X^{g_{n}}(t)\to X^{*}(t)\,\,\text{and}\,\,\psi^{g_{n}}(t,x-X^{g_{n}}(t))\to\psi^{*}(t,x-X^{*}(t))\,\,\text{locally uniformly in}\,\,(t,x)\in\R\times\R
\end{equation*}
as $n\to\infty$. As in \eqref{convergence-13579-time-derivative}, we also have
\begin{equation*}
\frac{d}{dt}\psi^{g_{n}}(t,x-X^{g_{n}}(t))\to\frac{d}{dt}\psi^{*}(t,x-X^{*}(t))\,\,\text{locally uniformly in}\,\,(t,x)\in\R\times\R
\end{equation*}
as $n\to\infty$. In particular, $\psi^{*}(t,x-X^{*}(t))$ is global-in-time solution of \eqref{main-eqn-g} with $g$ replaced by $g^{*}$. Moreover, $\rm(iii)$ in Lemma \ref{lem-asymptotic-speed} forces $\psi^{*}(t,x-X^{*}(t))$ to be a transition front, and hence, $\psi^{*}(t,x)=\psi^{g_{*}}(t,x)$ by uniqueness and normalization. It then follows that $\psi^{g_{n}}(t,x)\to\psi^{g_{*}}(t,x)$ locally uniform in $(t,x)\in\R\times\R$ as $n\to\infty$. But, this actually leads to \eqref{limit-to-show-13579} due to the uniform limits as $x\to\pm\infty$ as in $\rm(iii)$ in Lemma \ref{lem-asymptotic-speed}. Hence, $\Om$ is continuous.

Clearly, from the continuity of $\Om$ and the compactness of $H(f)$, $\tilde{H}=\Om(H(f))$ is compact, and hence, $\tilde{H}=\overline{\{(\psi^{f\cdot t},\dot{X}^{f\cdot t})|t\in\R\}}$. Thus, if we can show that $\Om$ is one-to-one, then its inverse $\Om^{-1}$ exists and must be continuous, and hence, $\Om$ is a homeomorphism.

We show $\Om$ is one-to-one. For contradiction, suppose there are $g_{1},g_{2}\in H(f)$ with $g_{1}\neq g_{2}$, but $\Om g_{1}=\Om g_{2}$, i.e., $(\psi^{g_{1}},\dot{X}^{g_{1}})=(\psi^{g_{2}},\dot{X}^{g_{2}})$. In particular, $\dot{X}^{g_{1}}=\dot{X}^{g_{2}}$, which together with the normalization $X^{g_{1}}(0)=0=X^{g_{2}}(0)$ gives $X^{g_{1}}=X^{g_{2}}$. It then follows from \eqref{profiles-asympto-spd-19318391} that
\begin{equation*}
u^{g_{1}}(t,x)=u^{g_{2}}(t,x),\quad (t,x)\in\R\times\R,
\end{equation*}
which then leads to $g_{1}(t,u(t,x))=g_{2}(t,u(t,x))$, where $u=u^{g_{1}}=u^{g_{2}}$. Since $u(t,x)$ is continuous and connects $0$ and $1$ for any $t\in\R$, we conclude that $g_{1}=g_{2}$ on $\R\times[0,1]$. It is a contradiction. Hence, $\Om$ is one-to-one, and therefore, $\Om$ is a homeomorphism.

Since $\Om$ is a homeomorphism, invariant measures on $H(f)$ and $\tilde{H}$ are related by $\Om$. We then conclude from \eqref{connecting-DSs} and the fact $\{\si_{t}\}_{t\in\R}$ is compact and uniquely ergodic that $\{\tilde{\si}_{t}\}_{t\in\R}$ is compact and uniquely ergodic. Now, define $\Phi:\tilde{H}\to\R$ by setting $\Phi(\psi^{g},\dot{X}^{g})=\dot{X}^{g}(0)$. Clearly, $\Phi$ is continuous. We then conclude from the unique ergodicity of $\{\tilde{\si}_{t}\}_{t\in\R}$ that there exist constants $c^{\pm}=c^{\pm}(\Phi)\in\R$ such that
\begin{equation*}
\lim_{t\to\pm\infty}\frac{1}{t}\int_{0}^{t}\Phi(\tilde{\si}_{s}(\psi^{g},\dot{X}^{g}))ds=c^{\pm}
\end{equation*}
uniformly in $g\in H(f)$. In particular, $\lim_{t\to\pm\infty}\frac{1}{t}\int_{0}^{t}\Phi(\tilde{\si}_{s}(\psi^{f},\dot{X}^{f}))ds=c^{\pm}$. But,
\begin{equation*}
\Phi(\tilde{\si}_{s}(\psi^{f},\dot{X}^{f}))=\Phi((\psi^{f}(\cdot+s,\cdot),\dot{X}^{f}(\cdot+s)))=\dot{X}^{f}(s).
\end{equation*}
This completes the proof.
\end{proof}


\section{Existence  of space non-increasing transition fronts}\label{sec-construction-tf}

In this section, we investigate the existence of space non-increasing transition fronts of the equation \eqref{main-eqn}. Throughout this section, we assume (H1)-(H4).

The main result of this section is stated in the following theorem.

\begin{thm}\label{thm-transition-front}
Equation \eqref{main-eqn} admits a transition front $u(t,x)$ that is non-increasing in space.
\end{thm}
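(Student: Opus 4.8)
The plan is to follow the perturbation scheme announced after Proposition~\ref{prop-regularity-of-tf}. For $\ep\in(0,1]$ I consider the regularized equation \eqref{eqn-perturbed-intro}, which is uniformly parabolic and therefore comes with full interior regularity, the strong comparison principle and Harnack's inequality, while the nonlocal term $J\ast u-u$ is only a bounded, order-preserving perturbation. The idea is to construct for each $\ep$ a space non-increasing transition front $u^{\ep}(t,x)$ of \eqref{eqn-perturbed-intro}, to prove interface estimates that are \emph{uniform} in $\ep$, and then to let $\ep\to0^{+}$. The reason this circumvents \eqref{unbalanced-cond} is that the regularity of $u^{\ep}$ is supplied by the diffusion $\ep u_{xx}$ rather than by Proposition~\ref{prop-regularity-of-tf}, so no information on the speed of the bistable traveling wave is needed.

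First I would construct $u^{\ep}$. The entire solution $u_{0}(t)$ of \eqref{ode-instable} furnished by (H4) plays the role of a time-dependent intermediate equilibrium: together with the uniform stability of $0$ and $1$ from (H3), it provides the bistable skeleton from which front-like, non-increasing sub- and super-solutions of \eqref{eqn-perturbed-intro} can be built. Solving \eqref{eqn-perturbed-intro} from a fixed non-increasing front-like datum imposed at times $t_{0}\to-\infty$, normalizing the $\tfrac12$-level interface to sit at $x=0$ when $t=0$, and extracting a locally uniform limit by parabolic estimates, produces an entire solution $u^{\ep}(t,x)\in(0,1)$ that is non-increasing in $x$ (monotonicity passing from the data through the comparison principle) and connects $1$ and $0$. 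This is the standard construction of bistable transition fronts in time heterogeneous media, now carried out for the parabolic equation \eqref{eqn-perturbed-intro}.

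The heart of the matter is a bound on the interface width of $u^{\ep}$ that does not deteriorate as $\ep\to0^{+}$: I would show that for every $0<\la_{1}\le\la_{2}<1$,
\[
\sup_{\ep\in(0,1]}\,\sup_{t\in\R}{\rm diam}\{x\in\R:\la_{1}\le u^{\ep}(t,x)\le\la_{2}\}<\infty .
\]
The two tails are controlled $\ep$-uniformly by (H3): where $u^{\ep}\le\theta_{0}$ one has $f(t,u^{\ep})\le-\beta_{0}u^{\ep}$, and an exponential barrier $e^{-c(x-X^{\ep}(t))}$ is a super-solution of \eqref{eqn-perturbed-intro} for a rate $c>0$ chosen uniformly for $\ep\in(0,1]$ (the extra term $\ep c^{2}$ stays bounded by $c^{2}$), with the symmetric statement near $u^{\ep}=1$; this mirrors Section~\ref{sec-decaying-estimate}. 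The intermediate band $\{\theta_{0}\le u^{\ep}\le\theta_{1}\}$ is squeezed to uniformly bounded width by the instability encoded in \eqref{nondegeneracy-intro}: near $u_{0}(t)$ one has $f_{u}>0$ with an $\ep$-independent lower bound, so a growing sub-solution drives the front through this band across uniformly bounded space, which is exactly the mechanism behind the steepness and stability arguments of Sections~\ref{sec-stability} and \ref{sec-decaying-estimate} but now read off $\ep$-uniform constants. I expect this uniform width estimate, together with the $\ep$-uniform interface-speed bounds (the analogue of \eqref{interface-eq}), to be the main obstacle, since it is precisely what prevents the front from flattening or collapsing to a constant in the limit.

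Finally I would pass to the limit. Each $u^{\ep}(t,\cdot)$ being monotone with values in $[0,1]$, Helly's selection theorem together with a diagonal extraction over a countable dense set of times yields a subsequence $\ep_{n}\to0^{+}$ and a limit $u(t,x)$, non-increasing in $x$, with $u^{\ep_{n}}\to u$ pointwise. Rather than differentiating, I would take the limit in the mild (Duhamel) formulation of \eqref{eqn-perturbed-intro} associated with the semigroup $S^{\ep}_{t}=e^{t(\ep\partial_{xx}-1)}$: since $S^{\ep}_{t}\to e^{-t}$ strongly, $J\ast u^{\ep_{n}}\to J\ast u$ by dominated convergence, and $f(\cdot,u^{\ep_{n}})\to f(\cdot,u)$ boundedly, one checks that $u$ satisfies the mild formulation of \eqref{main-eqn} built on $e^{-t}$; as the right-hand side of this formulation is continuous in $t$, $u$ is a genuine pointwise-in-$t$ solution of \eqref{main-eqn}. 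This route never invokes a uniform gradient bound, which is essential since $u$ may be discontinuous in $x$ when \eqref{unbalanced-cond} fails. The $\ep$-uniform width estimate is inherited by $u$ and yields \eqref{bounded-interface-width-defn}, while the uniform tail control forces $u\to1$ and $u\to0$ as $x\to\mp\infty$; hence $u$ is a space non-increasing transition front of \eqref{main-eqn}, completing the proof.
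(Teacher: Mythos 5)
Your proposal follows essentially the same route as the paper's proof: perturb to \eqref{eqn-perturbed-intro}, build space-monotone fronts $u^{\ep}$ for the parabolic equation using the unstable entire solution $u_{0}(t)$ from (H4) (the paper defers this to the construction in \cite{Sh99-2}), establish an $\ep$-uniform bounded interface width, and pass to the limit $\ep\to0^{+}$ by Helly-type compactness in an integrated formulation precisely because no $\ep$-uniform bound on $u^{\ep}_{x}$, $u^{\ep}_{xx}$ is available. The only point where you are a bit quick is the limit passage itself: pointwise convergence on a countable dense set of times does not suffice to pass to the limit in the time integral of the Duhamel/weak formulation, and the paper fills this by proving $L^{1}_{loc}(\R\times\R)$ precompactness of $\{u^{\ep}\}$ (Lemma \ref{compactness-approximate}, which requires a uniform $L^{1}$ modulus of continuity in the time variable), followed by per-time Helly extractions and a monotone right-continuous redefinition $u^{*}(t,x)=\lim_{y\downarrow x}u(t,y)$ to obtain a solution defined and satisfying the equation at every $(t,x)$.
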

\begin{proof}
We use the perturbation method. Fix $0<\ep_{0}\ll1$. For $\ep\in(0,\ep_{0}]$, we consider the following perturbation of \eqref{main-eqn}
\begin{equation}\label{perturbed-main-eqn}
u_{t}=J\ast u-u+\ep u_{xx}+f(t,u),\quad (t,x)\in\R\times\R.
\end{equation}
The advantage of considering the above perturbed equation is that we are able to apply the methods in \cite{Sh99-2} (also see \cite{FCh,Ch97,Sh04}) to construct transition fronts of \eqref{perturbed-main-eqn}. Here, we are not going to repeat the construction since it is lengthy. We just point out that the construction highly relies on the instability of the stationary solution $u_{0}(t)$ of the ODE \eqref{ode-instable}. Thus, for each $\ep\in(0,\ep_{0}]$, equation \eqref{perturbed-main-eqn} admits space decreasing transition fronts. Moreover, direct adaption of the proof of Theorem \ref{thm-uniqeness} yields the uniqueness, up to space shifts, of transition fronts of \eqref{perturbed-main-eqn}. For each $\ep\in(0,\ep_{0}]$, let $u^{\ep}(t,x)$ be the unique transition front of \eqref{perturbed-main-eqn} satisfying the normalization $u^{\ep}(0,0)=\frac{1}{2}$. Also, from the construction, there also holds the uniform bounded interface width for $\{u^{\ep}(t,x)\}$, that is,
\begin{equation}\label{uniform-biw}
\forall\,\,0<\la_{1}\leq\la_{2}<1,\quad\sup_{\ep\in(0,\ep_{0}]}\sup_{t\in\R}\text{diam}\big\{x\in\R|\la_{1}\leq u^{\ep}(t,x)\leq\la_{2}\big\}<\infty.
\end{equation}
Note that if the sequence $\{u^{\ep}(t,x)\}$ converges to some solution of \eqref{main-eqn}, then this solution must be a transition front of \eqref{main-eqn} due to \eqref{uniform-biw}. However, the convergence of $\{u^{\ep}(t,x)\}$ to some solution of \eqref{main-eqn} is far from being clear, since we have no idea whether $u^{\ep}_{x}(t,x)$ and $u^{\ep}_{xx}(t,x)$ are locally bounded in $(t,x)$ and uniformly in $\ep$, that means, we can not simply pass to the limit $\ep\to0$ in \eqref{perturbed-main-eqn}. To circumvent this difficulty, we first consider solutions in some weak sense.

We see that for any $\phi\in C_0^\infty(\R)$, there holds
\begin{equation}\label{weak-solution-appro}
\begin{split}
\int_{\R} u^{\ep}(t,x)\phi(x)dx=&\int_{\R}u^{\ep}(0,x)\phi(x)dx+\int_0^t \int_{\R}[J*u^{\ep}-u^{\ep}]\phi(x)dxdt\\
&+\ep\int_0^t \int_{\R}u^{\ep}(t,x)\phi_{xx}(x)dxdt+\int_0^t\int_{\R}f(t,u^{\ep}(t,x))\phi(x)dxdt.
\end{split}
\end{equation}
To pass to the limit $\ep\to0$ in \eqref{weak-solution-appro}, we derive some convergence properties of $u^{\ep}(t,x)$.

Since $\{u^{\ep}(t,x)\}$ is pre-compact in $L^{1}_{loc}(\R\times\R)$ (see Lemma \ref{compactness-approximate} below), we can use the diagonal argument to find some $u\in L^{1}_{loc}(\R\times\R)$ and a sequence $\{\ep_{n}\}$ such that
\begin{equation}\label{appro-a.e.-converg}
\text{$u^{\ep_{n}}(t,x)\to u(t,x)$ for a.e. $(t,x)\in\R\times\R$ as $n\to\infty$.}
\end{equation}
Let $\Om\subset\R\times\R$ be a measurable set with Lebesgue measure zero such that
\begin{equation*}
\text{$u^{\ep_{n}}(t,x)\to u(t,x)$ pointwise in $(t,x)\in(\R\times\R)\backslash\Om$ as $n\to\infty$.}
\end{equation*}
Since the functions $\{u^{\ep_{n}}(0,x)\}$ are decreasing in $x$ and uniformly bounded, Helly's selection theorem implies that there exists a subsequence, still denoted by $\{\ep_{n}\}$, and a non-increasing function $v(0,\cdot)$ such that
\begin{equation}\label{convergence-at-0}
\text{$u^{\ep_{n}}(0,x)\to v(0,x)$ pointwise in $x\in\R$ as $n\to\infty$.}
\end{equation}
Fix $t\in\R\backslash\{0\}$. Again, by Helly's selection theorem, there exists a subsequence $\{\ep_{n_{k}}^{t}\}\subset\{\ep_{n}\}$ and a non-increasing function $v(t,\cdot)$ such that
\begin{equation*}
\text{$u^{\ep_{n_{k}}^{t}}(t,x)\to v(t,x)$ pointwise in $x\in\R$ as $k\to\infty$.}
\end{equation*}
Clearly, $u(t,x)=v(t,x)$ for $(t,x)\in(\R\times\R)\backslash\Om$. We then redefine $u(t,x)$ on $\Om$ to be $v(t,x)$. Hence, \eqref{appro-a.e.-converg} is still true, and, moreover, for any $t\in\R$, we have
\begin{equation}\label{appro-converg-on-section}
\text{$u^{\ep_{n_{k}}^{t}}(t,x)\to u(t,x)$ pointwise in $x\in\R$ as $k\to\infty$,}
\end{equation}
where $\{\ep_{n_{k}}^{0}\}=\{\ep_{n}\}$. Also, $u(t,x)$ is non-increasing in $x$.

Now, for fixed $t\in\R$, using \eqref{appro-a.e.-converg}, \eqref{convergence-at-0} and \eqref{appro-converg-on-section}, we pass to the limit $\ep\to0$ along the subsequence $\{\ep^{t}_{n_{k}}\}$ as $k\to\infty$ in \eqref{weak-solution-appro} to obtain
\begin{equation*}
\begin{split}
&\int_{\R} u(t,x)\phi(x)dx\\
&\quad\quad=\int_{\R}u(0,x)\phi(x)dx+\int_0^t \int_{\R}[J*u-u]\phi(x)dxdt+\int_0^t\int_{\R}f(t,u(t,x))\phi(x)dxdt\\
&\quad\quad=\int_{\R} \Big\{ u(0,x)+\int_0^t [J*u-u]d\tau+\int_0^t f(\tau,u(\tau,x)) dt\Big\}\phi(x)dx
\end{split}
\end{equation*}
for any $\phi\in C_0^\infty(\R)$. In particular, for any fixed $t\in\R$,
\begin{equation}\label{new-eq4}
u(t,x)=u(0,x)+\int_0^t [J*u-u+f(\tau,u(\tau,x))]d\tau \quad \text{for}\quad a.e.\,\, x\in\R.
\end{equation}
For $t\in\R$, let $\Om_{t}\subset\R$ be the measurable set with measure zero such that \eqref{new-eq4} is true for any $x\in\R\backslash\Om_{t}$. Note that $\R\backslash\Om_{t}$ is dense in $\R$, otherwise $\Om_{t}$ contains an open interval, which is impossible.

For $(t,x)\in\R\times\R$, define
$$
u^{*}(t,x)=\lim_{y\in\R\backslash\Om_{t},y\downarrow x}u(t,y).
$$
This is well-defined, since $u(t,x)$ is non-increasing in $x$ and $\R\backslash\Om_{t}$ is dense in $\R$. By \eqref{new-eq4}, we have that for any $t\in\R$,
$$
u^*(t,x)=u^*(0,x)+\int_0^t [J\ast u^*-u^{*}+f(\tau,u^*(\tau,x))]d\tau \quad \text{for all}\,\, x\in\R.
$$
This implies that $u^*(t,x)$ is continuous in $t$ uniformly with respect to $x\in\R$ and then
$$
u^{*}_{t}=J*u^{*}-u^{*}+f(t,u^{*})\quad \text{for all}\,\, (t,x)\in\R\times\R.
$$
We then conclude from \eqref{uniform-biw} that $u^{*}(t,x)$ is a transition front. Moreover, $u^{*}(t,x)$ is non-increasing in $x$, since $u(t,x)$ is non-increasing in $x$.
\end{proof}

To finish the proof of Theorem \ref{thm-transition-front} under assumption (H5), we prove the following lemma.

\begin{lem}\label{compactness-approximate}
$\{u^{\ep}(\cdot,\cdot)\}_{\ep\in(0,\ep_{0}]}$ is pre-compact in $L_{loc}^{1}(\R\times\R)$.
\end{lem}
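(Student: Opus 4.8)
The plan is to establish total boundedness of $\{u^{\ep}\}_{\ep\in(0,\ep_{0}]}$ in $L^{1}(Q)$ for every bounded box $Q=(-T,T)\times(-R,R)$, which is exactly pre-compactness in $L^{1}_{loc}(\R\times\R)$. I will verify the Kolmogorov--Riesz--Fr\'echet criterion, and the one genuinely delicate point, which is also \emph{the main obstacle}, is the second-order term $\ep u^{\ep}_{xx}$ in \eqref{perturbed-main-eqn}: it carries no bound uniform in $\ep$, so it cannot be controlled directly. I will get around this by mollifying in $x$ and exploiting the $x$-monotonicity of $u^{\ep}$. Recall that each $u^{\ep}$ is, by construction, a classical (in particular $x$-smooth, thanks to the $\ep u_{xx}$ regularization) space-decreasing transition front with $0\le u^{\ep}\le 1$ and $u^{\ep}_{x}\le 0$.

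First I would record two consequences of monotonicity that hold uniformly in $t$ and $\ep$. Since $u^{\ep}_{x}\le 0$ and $0\le u^{\ep}\le 1$,
\[
\int_{\R}|u^{\ep}_{x}(t,x)|\,dx=\int_{\R}\big(-u^{\ep}_{x}(t,x)\big)\,dx=u^{\ep}(t,-\infty)-u^{\ep}(t,+\infty)\le 1,
\]
and consequently the uniform spatial-translation estimate
\[
\int_{\R}|u^{\ep}(t,x+h)-u^{\ep}(t,x)|\,dx\le |h|,\qquad \forall\,h\in\R,\ t\in\R,\ \ep\in(0,\ep_{0}].
\]
Together with the bound $0\le u^{\ep}\le 1$, this already furnishes a uniform $L^{1}(Q)$ bound and uniform continuity of $x$-translations on $Q$.

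Next I would handle the $t$-direction by mollification. Fix a standard mollifier $\rho_{\delta}\in C_{0}^{\infty}(\R)$ supported in $[-\delta,\delta]$, and set $w^{\ep}_{\delta}(t,x):=\big(u^{\ep}(t,\cdot)*\rho_{\delta}\big)(x)$. For \emph{fixed} $\delta$, I claim $\{w^{\ep}_{\delta}\}_{\ep}$ is uniformly bounded and equi-Lipschitz on $\overline{Q}$: clearly $0\le w^{\ep}_{\delta}\le 1$ and $|\partial_{x}w^{\ep}_{\delta}|=|u^{\ep}*\rho_{\delta}'|\le\|\rho_{\delta}'\|_{L^{1}}$; convolving \eqref{perturbed-main-eqn} in $x$ and shifting the second derivative onto the mollifier gives
\[
\partial_{t}w^{\ep}_{\delta}=J*w^{\ep}_{\delta}-w^{\ep}_{\delta}+\ep\,\big(u^{\ep}_{x}*\rho_{\delta}'\big)+\big(f(t,u^{\ep})*\rho_{\delta}\big),
\]
whence $|\partial_{t}w^{\ep}_{\delta}|\le 2+\ep_{0}\|\rho_{\delta}'\|_{\infty}+\sup_{(t,u)\in\R\times[0,1]}|f(t,u)|=:C_{\delta}$, uniformly in $\ep$ and $(t,x)$. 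The dangerous term is tamed precisely here: the factor $\ep\le\ep_{0}$ stays bounded and $|u^{\ep}_{x}*\rho_{\delta}'|\le\|\rho_{\delta}'\|_{\infty}\|u^{\ep}_{x}\|_{L^{1}}\le\|\rho_{\delta}'\|_{\infty}$ by the monotonicity bound above. By the Arzel\`{a}-Ascoli theorem, $\{w^{\ep}_{\delta}\}_{\ep}$ is pre-compact in $C(\overline{Q})$, hence in $L^{1}(Q)$.

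Finally I would let $\delta\to0$ and conclude by approximation. Writing $w^{\ep}_{\delta}(t,x)-u^{\ep}(t,x)=\int_{\R}\rho_{\delta}(y)\,[u^{\ep}(t,x-y)-u^{\ep}(t,x)]\,dy$ and using the uniform spatial-translation estimate gives $\|w^{\ep}_{\delta}(t,\cdot)-u^{\ep}(t,\cdot)\|_{L^{1}(\R)}\le\int_{\R}|y|\rho_{\delta}(y)\,dy\le\delta$, so that $\|w^{\ep}_{\delta}-u^{\ep}\|_{L^{1}(Q)}\le 2T\delta$, uniformly in $\ep$. Thus, given $\eta>0$, I would choose $\delta$ with $2T\delta<\eta/2$; since $\{w^{\ep}_{\delta}\}_{\ep}$ is pre-compact it admits a finite $\eta/2$-net in $L^{1}(Q)$, which is then a finite $\eta$-net for $\{u^{\ep}\}_{\ep}$. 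Hence $\{u^{\ep}\}_{\ep\in(0,\ep_{0}]}$ is totally bounded in $L^{1}(Q)$ for every bounded $Q$, i.e. pre-compact in $L^{1}_{loc}(\R\times\R)$, as claimed.
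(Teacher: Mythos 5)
Your proof is correct, and it takes a genuinely different route from the paper's. The paper also verifies an $L^{1}$ compactness criterion, but it gets spatial compactness from Helly's selection theorem applied to the monotone sections $u^{\ep}(t,\cdot)$, converts that into equicontinuity of $x$-translations via an abstract theorem from Adams' book, and then disposes of the time direction --- the step where the uncontrolled $\ep u^{\ep}_{xx}$ term lurks --- by invoking the argument of \cite[Lemma 2.5]{Sh03} to produce a modulus $\al_{r}(|\De t|)$ for $\int_{-r}^{r}|u^{\ep}(t+\De t,x)-u^{\ep}(t,x)|\,dx$. You instead exploit monotonicity quantitatively, through the uniform bound $\|u^{\ep}_{x}(t,\cdot)\|_{L^{1}(\R)}\le 1$ and the resulting translation estimate $\|u^{\ep}(t,\cdot+h)-u^{\ep}(t,\cdot)\|_{L^{1}}\le|h|$, and you handle the time direction self-containedly by mollifying in $x$: shifting the second derivative in \eqref{perturbed-main-eqn} onto the mollifier turns $\ep u^{\ep}_{xx}$ into $\ep\,u^{\ep}_{x}\ast\rho_{\de}'$, which the BV bound controls uniformly in $\ep$, so Arzel\`{a}--Ascoli applies to $\{w^{\ep}_{\de}\}_{\ep}$ and the $O(\de)$ mollification error (again from the translation estimate) closes the total-boundedness argument. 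What your approach buys is a fully explicit, reference-free proof in which the single source of compactness (spatial monotonicity plus $0\le u^{\ep}\le1$) is visible throughout; what the paper's approach buys is that it never needs to manipulate the regularized equation pointwise, since Helly plus the two translation criteria are applied directly to the solutions themselves. One small point worth making explicit in your write-up: the identity $u^{\ep}_{xx}\ast\rho_{\de}=u^{\ep}_{x}\ast\rho_{\de}'$ and the interchange $\partial_{t}(u^{\ep}\ast\rho_{\de})=(\partial_{t}u^{\ep})\ast\rho_{\de}$ use that each $u^{\ep}$ is a classical solution of the parabolic regularization with $u^{\ep}_{x}\in L^{1}(\R)$ and $\rho_{\de}$ compactly supported; this is available for fixed $\ep$ and the resulting constants are what end up uniform in $\ep$.
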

\begin{proof}
We first show that
\begin{equation}\label{pre-compactness-1}
\text{$\{u^{\ep}(t,\cdot)\}_{\ep\in(0,\ep_{0}],t\in\R}$ is pre-compact in $L^{1}_{loc}(\R)$}.
\end{equation}
Since $u^{\ep}(t,\cdot)$ is a decreasing function for any $\ep\in(0,\ep_{0}]$ and $t\in\R$ and $\{u^{\ep}(t,\cdot)\}_{\ep\in(0,\ep_{0}],t\in\R}$ is uniformly bounded, Helly's selection theorem yields that for any sequence $(\ep_{n},t_{n})$ there exists a subsequence $(\ep_{n_{k}},t_{n_{k}})\subset(\ep_{n},t_{n})$ and a non-increasing function $v:\R\to[0,1]$ such that
\begin{equation*}
u^{\ep_{n_{k}}}(t_{n_{k}},x)\to v(x)\,\,\text{pointwise in}\,\,x\in\R\,\,\text{as}\,\,k\to\infty,
\end{equation*}
which, together with dominated convergence theorem and boundedness, imply that
\begin{equation*}
u^{\ep_{n_{k}}}(t_{n_{k}},\cdot)\to v\,\,\text{in}\,\,L^{1}_{loc}(\R)\,\,\text{as}\,\,k\to\infty.
\end{equation*}
This verifies \eqref{pre-compactness-1}.

Fix $r>0$ and let $B_{r}=(-r,r)\times(-r,r)$. It remains to show that $\{u^{\ep}(t,x)\}_{\ep\in(0,\ep_{0}]}$ restricted to $B_{r}$ is pre-compact in $L^{1}(B_{r})$. Due to the uniform boundedness of $\{u^{\ep}(t,x)\}_{\ep\in(0,\ep_{0}]}$, for any $\bar{\ep}>0$ we can find some $O\subset B_{r}$ such that
$$\int_{B_{r}\backslash O}u^{\ep}(t,x)dtdx\leq|B_{r}\backslash O|\leq\bar{\ep}.$$
Thus, applying \cite[Theorem 2.21]{Ad75}, it suffices to show that for any $\bar{\ep}>0$ there exists $\de>0$ such that
\begin{equation}\label{pre-compactness-2}
\int_{B_{r}}|u^{\ep}(t+\De t,x+\De x)-u^{\ep}(t,x)|dtdx\leq\bar{\ep}
\end{equation}
for all $\ep\in(0,\ep_{0}]$ and $(\De t,\De x)\in\R\times\R$ with $|\De t|+|\De x|\leq \de$.

To this end, let $\bar{\ep}>0$. For $(\De t,\De x)\in\R\times\R$, we have
\begin{equation}\label{some-estimate-152877529}
\begin{split}
&\int_{B_{r}}|u^{\ep}(t+\De t,x+\De x)-u^{\ep}(t,x)|dtdx\\
&\quad\quad\leq\int_{B_{r}}|u^{\ep}(t+\De t,x+\De x)-u^{\ep}(t+\De t,x)|dtdx+\int_{B_{r}}|u^{\ep}(t+\De t,x)-u^{\ep}(t,x)|dtdx.
\end{split}
\end{equation}
We use \eqref{pre-compactness-1} to control the first integral on the right hand side of \eqref{some-estimate-152877529}. In fact, by \eqref{pre-compactness-1} and \cite[Theorem 2.32]{Ad75}, for any $\bar{\ep}>0$ there exist $\de_{1}>0$ such that
\begin{equation*}
\int_{-r}^{r}|u^{\ep}(t,x+\De x)-u^{\ep}(t,x)|dx\leq\frac{\bar{\ep}}{4r}
\end{equation*}
for all $\ep\in(0,\ep_{0}]$, $t\in\R$ and $\De x\in\R$ with $|\De x|\leq \de_{1}$.
It then follows that
\begin{equation*}
\int_{B_{r}}|u^{\ep}(t+\De t,x+\De x)-u^{\ep}(t+\De t,x)|dtdx\leq\frac{\bar{\ep}}{2}
\end{equation*}
for all $\ep\in(0,\ep_{0}]$ and $(\De t,\De x)\in\R\times\R$ with $|\De x|\leq \de_{1}$.

For the second integral on the right hand side of \eqref{some-estimate-152877529}, we argue as in the proof of \cite[Lemma 2.5]{Sh03} to find some continuous and nondecreasing function $\al_{r}(\cdot)$ satisfying $\al_{r}(0)=0$ such that
\begin{equation*}
\int_{-r}^{r}|u^{\ep}(t+\De t,x)-u(t,x)|dx\leq \al_{r}(|\De t|)
\end{equation*}
for all $\ep\in(0,\ep_{0}]$ and $t\in\R$. It then follows that there exists $\de_{2}>0$ such that
\begin{equation*}
\int_{B_{r}}|u^{\ep}(t+\De t,x)-u^{\ep}(t,x)|dtdx\leq\frac{\bar{\ep}}{2}
\end{equation*}
for all $\ep\in(0,\ep_{0}]$ and $\De t\in\R$ with $|\De t|\leq \de_{2}$.

Now, setting $\de=\min\{\de_{1},\de_{2}\}$, we find \eqref{pre-compactness-2} and then completes the proof.
\end{proof}

\begin{rem}
We remark that transition fronts of \eqref{perturbed-main-eqn} can be constructed without the unbalanced condition \eqref{unbalanced-cond}. Hence, Theorem \ref{thm-transition-front} is true if we drop \eqref{unbalanced-cond}. But, in the absence of \eqref{unbalanced-cond}, the constructed transition front may not be continuous in space. We refer the reader to \cite{BaFiReWa97} for a sufficient and necessary condition for the existence of discontinuous traveling waves of $u_{t}=J\ast u-u+f_{B}(u)$. It would be interest and important to study the stability and uniqueness of transition fronts in the absence of \eqref{unbalanced-cond}.
\end{rem}

We end this paper by mentioning a variation on (H4). The point is that we allow the failure of \eqref{nondegeneracy-intro}. But, then, we need an additional assumption, that is, $u_{0}(t)\equiv\theta_{*}$ for some $\theta_{*}\in[\tilde{\theta},\theta]$. We assume

\begin{itemize}
\item[\bf (H5)]There exists $\theta_{*}\in[\tilde{\theta},\theta]$ such that
\begin{equation*}
f(t,u)<0,\,\, u\in(0,\theta_{*})\quad\text{and}\quad f(t,u)>0,\,\,u\in(\theta_{*},1)
\end{equation*}
for all $t\in\R$.
\end{itemize}

Using different techniques, we are able to prove Theorem \ref{thm-transition-front} under assumptions (H1)-(H3) and (H5). But, in this case, we can not drop the unbalanced condition \eqref{unbalanced-cond}.

\begin{proof}[Proof of Theorem \ref{thm-transition-front} under assumptions (H1)-(H3) and (H5)]
The proof can be done along the same line as that in the ignition case (see \cite{ShSh14-2}), so we here only outline the strategies within the following four steps.

\paragraph{\textbf{Step 1. Approximating front-like solutions.}} Let $\phi_{B}$ is the decreasing profile of bistable traveling waves of $u_{t}=J\ast u-u+f_{B}(u)$ with the normalization $\phi_{B}(0)=\theta_{*}$. For $s<0$ and $y\in\R$, denote by $u(t,x;s,\phi_{B}(\cdot-y))$ the classical solution of \eqref{main-eqn} with initial data $u(s,x;s,\phi_{B}(\cdot-y))=\phi_{B}(x-y)$. Then, it can be shown that for any $s<0$, there exists a unique $y_{s}\in\R$ such that $u(0,0;s,\phi_{B}(\cdot-y_{s}))=\theta$. Moreover, $y_{s}\to-\infty$ as $s\to-\infty$.

Set $u(t,x;s):=u(t,x;s,\phi_{B}(\cdot-y_{s}))$. We see that $u(t,x;s)$ is decreasing in $x$. The functions $\{u(t,x;s)\}_{s<0,t\geq s}$ are the approximating front-like solutions.

\paragraph{\textbf{Step 2. Bounded interface width-I.}} For $s<0$, $t\geq s$ and $\la\in(0,1)$, let $X_{\la}(t;s)$ be such that $u(t,X_{\la}(t;s);s)=\la$. It is well-defined and continuous in $t$.

Then, there exists $\la_{*}\in(\theta_{*},1)$ such that for any $\la_{1},\la_{2}\in(0,\la_{*}]$, there holds
\begin{equation*}
\sup_{s<0,t\geq s}\big|X_{\la_{1}}(t;s)-X_{\la_{2}}(t;s)\big|<\infty.
\end{equation*}
This is the difficult part in constructing transition fronts. Its proof is based on the rightward propagation estimate of $X_{\la}(t;s)$ and an idea of Zlato\v{s} (see \cite[Lemma 2.5]{Zl13}). It is important that $\la_{*}>\theta_{*}$, and it is the reason why we need $f(t,\theta_{*})=0$ for all $t\in\R$.

\paragraph{\textbf{Step 3. Bounded interface width-II.}} We extend the result in Step 2 to
\begin{equation*}
\forall\la_{1},\la_{2}\in(0,1),\quad \sup_{s<0,t\geq s}\big|X_{\la_{1}}(t;s)-X_{\la_{2}}(t;s)\big|<\infty.
\end{equation*}
It is done through the following two steps:
\begin{itemize}
\item[\rm(i)] there are $c_{\min}>0$, $c_{\max}>0$, $\tilde{c}_{\max}>0$ and $d_{\max}>0$ such that for any $s<0$, there exists a continuously differentiable function $X(t;s):[s,\infty)\ra\R$ satisfying
\begin{equation*}
c_{\min}\leq\dot{X}(t;s)\leq c_{\max}\quad\text{and}\quad|\ddot{X}(t;s)|\leq\tilde{c}_{\max}\quad\text{for}\quad t\geq s
\end{equation*}
such that $0\leq X(t;s)-X_{\la_{*}}(t;s)\leq d_{\max}$ for $t\geq s$; moreover, $\{\dot{X}(\cdot,s)\}_{s<0}$ and $\{\ddot{X}(\cdot,s)\}_{s<0}$ are uniformly bounded and uniformly Lipschitz continuous;

\item[\rm(ii)] using $\rm(i)$, we can find $c_{\pm}>0$ and $h_{\pm}>0$ such that
\begin{equation*}
u(t,x;s)\geq1-e^{c_{-}(x-X(t;s)+h_{-})}\quad\text{and}\quad
u(t,x;s)\leq e^{-c_{+}(x-X(t;s)-h_{+})}
\end{equation*}
for all $x\in\R$, $s<0$ and $t\geq s$.
\end{itemize}
Clearly, the bounded interface width follows.

\paragraph{\textbf{Step 4. Construction of transition fronts.}} The approximating solutions $\{u(t,x;s)\}_{s<0,t\geq s}$ converge to some transition front (as in Theorem \ref{thm-transition-front}) as $s\to-\infty$ along some subsequence due to the properties in Step 3 and the following: there holds
\begin{equation*}
\sup_{s<0,t\geq s}\sup_{x\neq y}\bigg|\frac{u(t,y;s)-u(t,x;s)}{y-x}\bigg|<\infty,
\end{equation*}
whose proof relies on the observation that for fixed $x$, the term $\frac{u(t,x+\eta;s)-u(t,x;s)}{\eta}$ for $0<|\eta|\leq\eta_{0}\ll1$ can only grow for a period of time that is independent of $x$.
\end{proof}

\appendix

\section{Comparison principles}\label{sec-app-cp}

We state comparison principles used in the previous sections. 

\begin{prop}\label{prop-app-comparison}
Let $K:\R\times\R\to[0,\infty)$ be continuous and satisfy $\sup_{x\in\R}\int_{\R}K(x,y)dy<\infty$.
Let  $a:\R\times\R\to\R$ be continuous and uniformly bounded.

\begin{itemize}
\item[\rm(i)] Suppose that $X:[0,\infty)\to\R$ is continuous and that $u:[0,\infty)\times\R\to\R$ satisfies the following:   $u, u_t:[0,\infty)\times\R\to\R$
are continuous, the limit $\lim_{x\ra\infty}u(t,x)=0$ is locally uniformly in $t$,  and
\begin{equation*}
\begin{cases}
u_{t}(t,x)\geq \int_{\R}K(x,y)u(t,y)dy+a(t,x)u(t,x),\quad x>X(t),\,\,t>0,\\
u(t,x)\geq0,\quad x\leq X(t),\,\,t>0,\\
u(0,x)=u_{0}(x)\geq0,\quad x\in\R.
\end{cases}
\end{equation*}
Then $u(t,x)\geq0$ for $(t,x)\in(0,\infty)\times\R$.

\item[\rm(ii)]  Suppose that $X:[0,\infty)\to\R$ is continuous and that $u:[0,\infty)\times\R\to\R$ satisfies the following:   $u, u_t:[0,\infty)\times\R\to\R$
are continuous, the limit $\lim_{x\ra -\infty}u(t,x)=0$ is locally uniformly in $t$,  and
\begin{equation*}
\begin{cases}
u_{t}(t,x)\geq \int_{\R}K(x,y)u(t,y)dy+a(t,x)u(t,x),\quad x<X(t),\,\,t>0,\\
u(t,x)\geq0,\quad x\ge X(t),\,\,t>0,\\
u(0,x)=u_{0}(x)\geq0,\quad x\in\R.
\end{cases}
\end{equation*}
Then $u(t,x)\geq0$ for $(t,x)\in(0,\infty)\times\R$.

\item[\rm(iii)] Suppose  that $u:[0,\infty)\times\R\to\R$ satisfies the following:   $u, u_t:[0,\infty)\times\R\to\R$
is continuous, $\inf_{t\ge 0,x\in\R}u(t,x)>-\infty$, and
\begin{equation*}
\begin{cases}
u_{t}(t,x)\geq \int_{\R}K(x,y)u(t,y)dy+a(t,x)u(t,x),\quad x\in\R,\,\,t>0,\\
u(0,x)=u_{0}(x)\geq0,\quad x\in\R.
\end{cases}
\end{equation*}
Then $u(t,x)\geq0$ for $(t,x)\in(0,\infty)\times\R$. Moreover, if $u_0(x)\not\equiv 0$, then
$u(t,x)>0$ for $(t,x)\in(0,\infty)\times\R$.
\end{itemize}
\end{prop}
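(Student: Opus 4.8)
The plan is to prove all three parts by one mechanism: reduce to a strict differential inequality, then run a nonlocal minimum principle, the three geometric settings differing only in how one prevents the extremum from escaping to spatial infinity. Write $Lu(t,x)=\int_{\R}K(x,y)u(t,y)\,dy$ and set $k_{0}=\sup_{x}\int_{\R}K(x,y)\,dy<\infty$, $a_{0}=\sup|a|<\infty$, $k(x)=\int_{\R}K(x,y)\,dy$. Fix $\la>k_{0}+a_{0}$ and substitute $v=e^{-\la t}u$; since $L$ acts only in the $y$ variable, the hypothesis becomes $v_{t}\geq Lv+(a-\la)v$ with $a-\la\leq a_{0}-\la<0$. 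For $\ep>0$ I would then perturb by setting $v^{\ep}=v+\ep(1+t)$ and compute, using $L[1]=k(x)\leq k_{0}$, that $v^{\ep}_{t}\geq Lv^{\ep}+(a-\la)v^{\ep}+\ep\,[\,1-(1+t)(k(x)+a(t,x)-\la)\,]$. The choice of $\la$ makes $k(x)+a(t,x)-\la\leq k_{0}+a_{0}-\la<0$, so the bracket is $\geq 1$, and on every strip $[0,T]\times\R$ one obtains the strict slack inequality $v^{\ep}_{t}\geq Lv^{\ep}+(a-\la)v^{\ep}+\ep$, while $v^{\ep}(0,\cdot)\geq\ep>0$.

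Next I would show $v^{\ep}>0$ on $[0,T]\times\R$ by contradiction. Because $v^{\ep}(0,\cdot)\geq\ep$ and $v^{\ep}$ is continuous, one looks at the first time $t_{*}>0$ at which the infimum in $x$ drops to $0$, attained (or approached) at a point $x_{*}$. There $v^{\ep}_{t}(t_{*},x_{*})\leq 0$; moreover, since $v^{\ep}(t_{*},\cdot)\geq 0$ and $K\geq 0$ we have $Lv^{\ep}(t_{*},x_{*})\geq 0$, while $(a-\la)v^{\ep}(t_{*},x_{*})=(a-\la)\cdot 0=0$. Feeding these into the slack inequality gives $0\geq v^{\ep}_{t}(t_{*},x_{*})\geq 0+0+\ep>0$, a contradiction. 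Hence $v^{\ep}>0$; letting $\ep\to 0$ yields $v\geq 0$, and letting $T\to\infty$ gives $u\geq 0$ on $(0,\infty)\times\R$. The uniform constants $k_{0}$ and $a_{0}$ are what keep the slack $\ep$ uniform in $x$, which is the crucial feature for running the argument at a point that may only be reached in the limit $|x|\to\infty$.

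The genuine difficulty, and the reason the three hypotheses differ, is exactly this noncompactness: the extremum of $v^{\ep}$ may be approached only as $|x|\to\infty$, and there is no Harnack inequality or parabolic smoothing to force attainment at an interior point. This is where $\sup_{x}\int_{\R}K(x,y)\,dy<\infty$ and the uniform bound on $a$ are indispensable, since they make the slack estimate uniform in $x$ and hence valid along an escaping minimizing sequence. In parts (i) and (ii) the one-sided limit $u\to 0$ as $x\to+\infty$ (resp. $x\to-\infty$), combined with the standing sign condition $u\geq 0$ on the complementary half-line $\{x\leq X(t)\}$ (resp. $\{x\geq X(t)\}$), confines any negative infimum to a region from which it cannot escape, so the minimum principle localizes; in part (iii) one has only $\inf u>-\infty$, and the escape is controlled purely by the uniform slack along a minimizing sequence. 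I expect this interplay of nonlocality and noncompactness to be the main obstacle.

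Finally, for the strong conclusion in (iii) that $u_{0}\not\equiv 0$ forces $u>0$, I would pass to the Duhamel/integral form of $v_{t}\geq Lv+(a-\la)v$ and propagate positivity. Once $u\geq 0$ is known, $\int_{\R}K(x,y)u(t,y)\,dy>0$ at any $(t,x)$ for which $u(t,\cdot)$ is positive on a set of positive measure meeting the support of $K(x,\cdot)$; iterating this spreading, exactly as the convolution powers $J^{N}$ become positive on arbitrarily large intervals (cf. the discussion in the proof of Lemma \ref{lem-tech-1234567}), shows that the positivity present at $t=0$ instantly invades all of $\R$. I note that this last step genuinely uses positivity of the kernel, consistent with the application to \eqref{main-eqn} where $K(x,y)=J(x-y)$; for a kernel vanishing identically the strict conclusion would fail.
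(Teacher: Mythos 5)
The paper does not actually prove this proposition --- it is quoted verbatim from \cite{ShSh14-2} (Proposition A.1) --- so I am judging your proposal on its own terms. Your reduction (the tilt $v=e^{-\la t}u$ with $\la>k_{0}+a_{0}$, the perturbation $v^{\ep}=v+\ep(1+t)$, and the resulting uniform slack $v^{\ep}_{t}\geq Lv^{\ep}+(a-\la)v^{\ep}+\ep$) is correct, and for parts (i) and (ii) the argument closes: there $\{v^{\ep}\leq0\}\subset\{u\leq-\ep\}$ is contained in a compact subset of $[0,T]\times\R$ (it lies in $\{x>X(t)\}$ and is bounded on the right by the decay of $u$), so a first touching point $(t_{*},x_{*})$ with $v^{\ep}(t_{*},x_{*})=0$, $v^{\ep}(t_{*},\cdot)\geq0$ and $x_{*}>X(t_{*})$ genuinely exists and your computation $0\geq v^{\ep}_{t}\geq\ep$ applies. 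The gap is in part (iii). Your written contradiction uses three facts tied to an \emph{attained} first touching point: $v^{\ep}_{t}(t_{*},x_{*})\leq0$, $v^{\ep}(t_{*},\cdot)\geq0$ on all of $\R$ (to get $Lv^{\ep}\geq0$), and $v^{\ep}(t_{*},x_{*})=0$ (to kill $(a-\la)v^{\ep}$). In (iii) there is no spatial decay, the infimum $\inf_{x}v^{\ep}(t,\cdot)$ need not be attained, the set $\{t:\inf_{x}v^{\ep}(t,\cdot)\leq0\}$ need not even be closed, and none of the three facts survives along a minimizing sequence: at a near-minimizer one only gets $Lv^{\ep}\geq k(x_{n})I$ with $I=\inf v^{\ep}$ possibly already negative, and the naive transcription of your inequality yields $0\geq k_{0}I+\ep$, i.e.\ $I\leq-\ep/k_{0}$, which is not a contradiction. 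Saying that ``the escape is controlled purely by the uniform slack along a minimizing sequence'' names the difficulty without resolving it.

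The step can be closed, but it needs a different cancellation from the one you wrote. For instance: set $I=\inf_{[0,T]\times\R}v^{\ep}$, which is finite because $\inf u>-\infty$, and suppose $I<0$. Choose $x_{n}$ with $\inf_{t\in[0,T]}v^{\ep}(t,x_{n})\leq I+\frac1n$ and let $s_{n}$ minimize $t\mapsto v^{\ep}(t,x_{n})$ on $[0,T]$; then $s_{n}>0$ (since $v^{\ep}(0,\cdot)\geq\ep$), $v^{\ep}_{t}(s_{n},x_{n})\leq0$, $Lv^{\ep}(s_{n},x_{n})\geq k(x_{n})I$ and $(a-\la)v^{\ep}(s_{n},x_{n})\geq(a-\la)I-(\la+a_{0})/n$, so the slack inequality gives $0\geq\big[k(x_{n})+a-\la\big]I-(\la+a_{0})/n+\ep\geq\ep-(\la+a_{0})/n$, because $k(x_{n})+a-\la\leq k_{0}+a_{0}-\la<0$ and $I<0$ make the product nonnegative --- a contradiction for large $n$. (Alternatively, integrate the inequality back, at each fixed $x$ with $u(t,x)<0$, to the last time $u(\cdot,x)$ vanished, and run Gronwall on $h(t)=\sup_{x}\max(-u(t,x),0)\leq B$.) Note that this uses $\la>k_{0}+a_{0}$ in an essential way \emph{at the extremum}, not merely in the slack computation. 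Finally, your remark on the ``moreover'' clause is apt: with only $K\geq0$ and $\sup_{x}\int_{\R}K(x,y)dy<\infty$ the strict positivity conclusion fails (e.g.\ for $K\equiv0$), and your convolution-power spreading is the right mechanism once $K$ has the positivity enjoyed by $J$ in (H1).
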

\begin{proof}
See \cite[Proposition A.1]{ShSh14-2} for the proof.
\end{proof}

\begin{defn}
Let $t_{0}\in\R$ and $T>0$. A continuous function $u:[t_{0},t_{0}+T)\times\R\to\R$ is called a super-solution (or sub-solution) of \eqref{main-eqn} on $[t_{0},t_{0}+T)$ if $u(t,x)$ is differentiable in $t$ on $(t_{0},t_{0}+T)$ for any $x\in\R$ and satisfies
$$
u_{t}(t,x)\geq\text{(or $\leq$)}\int_{\R}J(x-y)u(t,y)dy+f(t,u(t,x)),\quad (t,x)\in(t_{0},t_{0}+T)\times\R.
$$
\end{defn}

Proposition \ref{prop-app-comparison}(iii) gives the following comparison principal for \eqref{main-eqn}.

\begin{prop}\label{prop-app-cp-nonlinear}
Let $t_{0}\in\R$ and $T>0$. Suppose $u^{+}(t,x)$ and $u^{-}(t,x)$ are super- and sub-solutions of \eqref{main-eqn} on $[t_{0},t_{0}+T)$, respectively. 
\begin{itemize}
\item[\rm(i)] If $u^{+}(t_{0},\cdot)\geq u^{-}(t_{0},\cdot)$, then $u^{+}(t,x)\geq u^{-}(t,x)$ for $(t,x)\in(t_{0},t_{0}+T)\times\R$.
\item[\rm(ii)] If $u^{+}(t_{0},\cdot)\geq u^{-}(t_{0},\cdot)$ and $u^{+}(t_{0},\cdot)\not\equiv u^{-}(t_{0},\cdot)$, then $u^{+}(t,x)>u^{-}(t,x)$ for $(t,x)\in(t_{0},t_{0}+T)\times\R$.
\end{itemize}
\end{prop}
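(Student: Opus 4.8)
The plan is to reduce the nonlinear comparison principle to the linear one of Proposition \ref{prop-app-comparison}(iii). After translating the time variable we may assume $t_0 = 0$, so the strip is $[0,T)\times\R$ and the cited proposition applies directly. The idea is to set $w := u^+ - u^-$, linearize the difference of the reaction terms, and recognize that $w$ solves a linear differential inequality of exactly the form treated in Proposition \ref{prop-app-comparison}(iii).

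First I would subtract the sub-solution inequality from the super-solution inequality for \eqref{main-eqn}. This gives, for $(t,x)\in(0,T)\times\R$,
\begin{equation*}
w_t(t,x) \geq \int_\R J(x-y)\,w(t,y)\,dy - w(t,x) + \big[f(t,u^+(t,x)) - f(t,u^-(t,x))\big].
\end{equation*}
Since $f$ is $C^1$ in $u$ by (H2), the fundamental theorem of calculus yields
\begin{equation*}
f(t,u^+) - f(t,u^-) = \Big(\int_0^1 f_u\big(t,\,u^- + s(u^+ - u^-)\big)\,ds\Big)\,w(t,x),
\end{equation*}
so that $w$ satisfies $w_t(t,x) \geq \int_\R J(x-y)\,w(t,y)\,dy + b(t,x)\,w(t,x)$, where $b(t,x) := -1 + \int_0^1 f_u(t, u^- + s(u^+-u^-))\,ds$.

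To invoke Proposition \ref{prop-app-comparison}(iii) I would take $K(x,y):=J(x-y)$, which is nonnegative, continuous, and satisfies $\sup_{x}\int_\R K(x,y)\,dy = \int_\R J = 1 < \infty$ by (H1), and $a(t,x):=b(t,x)$. With $w(0,\cdot)=u^+(0,\cdot)-u^-(0,\cdot)\geq 0$ and $w$ bounded below, Proposition \ref{prop-app-comparison}(iii) gives $w\geq 0$ on $(0,T)\times\R$, which is (i); if moreover $w(0,\cdot)\not\equiv 0$, its ``Moreover'' clause upgrades this to $w>0$, giving (ii). The one point that must be checked carefully --- and the main obstacle --- is that $b$ is continuous and \emph{uniformly} bounded, together with the requirement $\inf w > -\infty$. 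Both hold as soon as $u^\pm$ take values in the range $[-1,2]$ on which (H2) controls $f_u$: then the integrand defining $b$ is uniformly bounded and $w$ is bounded. This boundedness of the super- and sub-solutions is the hypothesis one must track; in every application in the paper the relevant $u^\pm$ are built from the transition front plus small constants and so lie in $[-1,2]$, making the restriction harmless. Everything else is a direct substitution into the already-established linear comparison principle.
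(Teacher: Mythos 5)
Your proposal is correct and follows exactly the route the paper intends: the paper derives this proposition from Proposition \ref{prop-app-comparison}(iii) with no further detail, and your linearization $w=u^{+}-u^{-}$, $b(t,x)=-1+\int_{0}^{1}f_{u}(t,u^{-}+s(u^{+}-u^{-}))\,ds$ is the standard way to fill in that reduction. Your remark that uniform boundedness of $b$ and of $w$ from below requires $u^{\pm}$ to stay in the range where (H2) controls $f_{u}$ is a fair point about an implicit hypothesis, and it is indeed satisfied in all of the paper's applications.
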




\bibliographystyle{amsplain}

\end{document}